\newtheorem*{Acknow}{Acknowledgments}
\newtheorem{thm}{Theorem}[section]
\newtheorem{prop}{Proposition}[section]
\newtheorem{defi}{Definition}[section]
\newtheorem{lem}{Lemma}[section]
\newtheorem{rem}{Remark}[section]
\newcommand{\R}{\mathbb{R}}
\numberwithin{equation}{section}
\newcommand{\eps}{\epsilon}
\newcommand{\wto}{\rightharpoonup}
\makeatletter \@addtoreset{equation}{section} \makeatother
\newcounter{const}
\author[T. Gou, M. Majdoub \& T. Saanouni]{Tianxiang Gou, Mohamed Majdoub and Tarek Saanouni}
\address[T. Gou]{School of Mathematics and Statistics, Xi'an Jiaotong University, Xi'an 710049, Shaanxi, People's Republic of China.}
\email{\sl  {tianxiang.gou@xjtu.edu.cn}}
\address[M. Majdoub]{Department of Mathematics, College of Science, Imam Abdulrahman Bin Faisal University, P. O. Box 1982, Dammam, Saudi Arabia.\newline
Basic and Applied Scientific Research Center, Imam Abdulrahman Bin Faisal University, P.O. Box 1982, 31441, Dammam, Saudi Arabia.}
\email{\sl {mmajdoub@iau.edu.sa}}
\email{\sl {med.majdoub@gmail.com}}
\address[T. Saanouni]{Departement of Mathematics, College of Science, Qassim University, Buraydah, Kingdom of Saudi Arabia.}
\email{\sl {t.saanouni@qu.edu.sa}}
\subjclass[2020]{Primary: 35Q55; 35J15; Secondary: 35A01; 35B40; 35B44; 35J20; 35B45; 35P25.}
\keywords{Energy critical INLS; { Defocusing perturbation}; Variational Methods; Ground States; Scattering; Blow-up; Nonlinear Equations}
\title[Energy Critical INLS]{Radial 3D Focusing Energy Critical INLS equations with defocusing perturbation: Ground states, Scattering, and Blow-up}
\begin{document}

\begin{abstract}
We investigate the following inhomogeneous nonlinear Schr\"odinger equation in the radial regime, featuring a focusing energy-critical nonlinearity and a defocusing perturbation:
$$
 \textnormal{i} \partial_t u +\Delta u =|x|^{-a} |u|^{p-2} u - |x|^{-b} |u|^{4-2b}u \quad \mbox{in} \,\, \R_t \times \R_x^3,
$$
where $0<a$, $b<2$ and ${ 2+\frac{4-2a}{3}< p\leq 6-2a}$. 

First, we establish the existence and nonexistence of ground states, along with their quantitative properties. Subsequently, we analyze the dichotomy between scattering and blow-up for solutions with energy below the ground-state energy threshold.

An intriguing feature of this equation is the lack of scaling invariance, which arises from the competing effects of the inhomogeneous nonlinearities. Additionally, the presence of singular weights breaks translation invariance in the spatial variable, introducing further complexity to the analysis.

To the best of our knowledge, this work represents the first comprehensive study of the inhomogeneous nonlinear Schr\"odinger equation with a leading-order focusing energy-critical inhomogeneous nonlinearity and a defocusing perturbation. Our results provide new insights into the interplay between these competing nonlinearities and their influence on the dynamics of solutions.
\end{abstract}

\hrule
\maketitle
\hrule

\thispagestyle{empty}

\tableofcontents
\eject
\section{Introduction}\label{S1}

In this paper, we consider the following { focusing energy critical inhomogeneous} nonlinear Schr\"odinger equation with perturbed { defocusing} inhomogeneous nonlinearity,
\begin{align} \label{equ}
\textnormal{i} \partial_t u +\Delta u =|x|^{-a} |u|^{p-2} u - |x|^{-b} |u|^{4-2b}u \quad \mbox{in} \,\, \R_t \times \R_x^3,
\end{align}
where $u(t,x)$ is a complex-valued function { defined} on { the} spacetime $\R_t\times\R_x^3$, $0<a, b<2$, and ${ 2+\frac{4-2a}{3}< p\leq 6-2a}$. Equation \eqref{equ} provides a versatile framework for describing a wide range of physical phenomena { in plasma physics, nonlinear optics, and Bose-Einstein condensation}. For deeper insights and further interpretations, we refer the readers to  \cite{BPVT, KA} and the works cited therein.

Given its Hamiltonian structure, the solutions { to the Cauchy problem for} \eqref{equ} exhibit the  conservation of { the} energy { defined by}
\begin{equation}\label{Ener}
E(u(t)):=\frac 12 \int_{\R^3} |\nabla u(t,x)|^2 \,dx +\frac{1}{p} \int_{\R^3} |x|^{-a} |u(t,x)|^{p} \,dx - \frac{1}{6-2b} \int_{\R^3} |x|^{-b} |u(t,x)|^{6-2b} \,dx.
\end{equation}
Furthermore, the solutions { to the Cauchy problem for} \eqref{equ} also preserve the mass { defined by}
\begin{equation}  \label{Mass}
M(u(t))=\int_{\R^3} |u(t,x)|^2 \,dx.
\end{equation}

{ Equation} \eqref{equ} represents a specific case within a wider category of inhomogeneous nonlinear Schr\"odinger equations characterized by double { inhomogeneous} nonlinearities. These equations can be generally formulated as:
\begin{equation}\label{INLS-2}
\textnormal{i} \partial_t u +\Delta u =\lambda\,|x|^{-a} |u|^{p-2} u +\mu\, |x|^{-b} |u|^{q-2}u \quad \mbox{in} \,\, \R_t \times \R_x^N,
\end{equation}
where $a, b\geq 0$, $\lambda, \mu \in\R$, and $p, q>2$. The aforementioned equation generalized classical models like the nonlinear Schr\"odinger { equations} (NLS) and the inhomogeneous nonlinear Schr\"odinger { equations} (INLS). { Such equations describe the propagation of laser beams in certain types of plasma media and nonlinear optics, see for example \cite{LT, Gi} and the references therein}. 

Specifically, when $a=b=\mu=0$, { then \eqref{INLS-2} becomes the standard nonlinear Schr\"odinger equation}
\begin{equation} \label{NLS}
 \textnormal{i} \partial_t u +\Delta u =\lambda\, |u|^{p-2} u  \quad \mbox{in} \,\, \R_t \times \R_x^N.
\end{equation}
{ Equation \eqref{NLS} has been extensively studied in the last decades. Local well-posedness of solutions to the Cauchy problem for \eqref{NLS} in the energy space was first established by Ginibre and Velo in \cite{GV}. The existence of finite-time blow-up solutions was proved by Glassey in \cite{Gl}. Later on, there exists a great deal of literature devoted to the consideration of dynamics of solutions.
Dynamical properties of blow-up solutions in the mass critical case were investigated in \cite{M1, MR1, MR2, MR3, MT, We}. The scattering versus blow-up of solutions below the ground state energy level in the mass supercritical case were considered in \cite{ADM, dm, DM, DHR, FXC, HR, KM, KV}.}

In the case where $\mu=0$ and $a>0$, { then \eqref{INLS-2} becomes the inhomogeneous nonlinear Schr\"odinger equation}
\begin{equation} \label{INLS}
\textnormal{i} \partial_t u +\Delta u =\lambda\,|x|^{-a} |u|^{p-2} u  \quad \mbox{in} \,\, \R_t \times \R_x^N.
\end{equation}
It is worth mentioning that there { exists} a large amount of research on the initial value problem associated to \eqref{NLS}. 
{ By using the abstract theory introduced in \cite{Ca}, Genoud and Stuart \cite{GS} initially established the local well-posedness of solutions to the Cauchy problem for \eqref{INLS} in the energy space for { $0<a<2$ and $2<p<2+\frac{2-a}{(N-2)_{+}}$}\footnote{Hereafter we use the notation $\kappa_+ :=\max(\kappa,0)$ with the convention $0^{-1}=\infty$.}. Subsequently, the existence and dynamics of blow-up solutions were considered in \cite{CG', D, Far, Genoud2012, Gou}. The sharp thresholds for scattering versus blow-up of solutions below the ground state energy level were derived in \cite{Campos, cc, CFGM, DK, FG, MMZ} for the energy sub-critical case and in \cite{chl, CL, GM} for the energy critical case. We also refer the readers to the monographs \cite{Ca, Bour, Fib, LP, SS, Ta}, where a very extensive overview on the most established results with respect to solutions to \eqref{NLS} and \eqref{INLS} is covered. }

{  When $a=b=0$, then \eqref{INLS-2} reduces to the following nonlinear Schr\"odinger equation with combined power nonlinearities,
\begin{align} \label{dnls}
\textnormal{i} \partial_t u +\Delta u =\lambda\ |u|^{p-2} u +\mu\,  |u|^{q-2}u \quad \mbox{in} \,\, \R_t \times \R_x^N.
\end{align}
Equation \eqref{dnls} has recently received substantial attention after the pioneering work due to Tao, Visan, and Zhang \cite{TVZ}, where the global well-posedness and scattering of solutions were studied across different regimes.} Inspired by \cite{TVZ}, there {{exists} a large number of} significant { literature} devoted to the study of solutions to \eqref{dnls}. The scattering and { blow-up} of solutions { to \eqref{dnls} in the inter-critical case} was investigated by Bellazzini et al. \cite{BDF} for a defocusing perturbation and by Xie \cite{Xie} for a focusing perturbation. { Moreover,} the scattering versus blow-up dichotomy under the ground state threshold was considered in \cite{KOPV, KRV} for a defocusing energy critical { perturbation in three space dimensions}. The same { topic} was { discussed} for a focusing energy-critical perturbed nonlinearity in \cite{MXZ}. { These results were} extended to { the} lower dimensions in \cite{CMZ} and to four space dimensions in \cite{MTZ}. 

{ Inspired by the study performed previously, it is interested to consider solutions to nonlinear Schr\"odinger equations with combined inhomogeneous nonlinearities}. Recently, { in \cite{GMS}, we revealed a range of properties of solutions to \eqref{INLS-2}, where $N \geq 1$, $0<a,b< \min \{2, N\}$, $\lambda>0$, $\mu<0$, and $ 2<p<q<\frac{2N-2b}{(N-2)_+}$. The investigation included the existence/nonexistence, symmetry, decay, uniqueness, non-degeneracy, and instability of ground states}.
 
{ We also} established the scattering below the ground state energy threshold by employing Tao’s scattering criterion and Dodson-Murphy’s Virial/Morawetz inequalities in the non-radial regime. Furthermore, we provided an upper bound on the blow-up rate. 
Let us also mention \cite{GC}, where the authors investigated  { solutions to  }\eqref{INLS-2} under the conditions $2<N<6$, $0<a,b<\min\{2, \frac{6-N}{2}\}$, $\lambda\mu\neq 0$, $2<p<\frac{2N-2a}{N-2}$, and $q=\frac{2N-2b}{N-2}$. They { achieved} the global well-posedness of solutions in the energy space 
and blow-up phenomenon { of solutions} for initial data in $\Sigma:=H^1\cap L^2(|x|^2 dx)$ { with} negative energy. { It is worth pointing out that the research carried out in \cite{GC} does not overlap the one in the current paper.}

{ As an extension of} the results outlined in \cite{GMS}, we { are going to further} explore a scenario involving a focusing energy-critical nonlinearity perturbed by a { defocusing nonlinearity} in three spatial dimensions, where $N=3$, $\lambda=1, \mu=-1$, $0<a,b<2$, { $2<p \leq 6-2a$} and $q=6-2b$. { The principal aim of the paper is to establish the existence/nonexistence and quantitative properties of ground states and to prove the scattering versus blowup below the ground-state energy threshold.
}

The primary goal of this paper is to explore standing wave solutions to \eqref{equ}. A standing wave solution takes the form
$$
u(t,x)=e^{\textnormal{i} \omega t} \phi(x), \quad \omega \in \R,
$$
where $\phi\in H^1(\R^3)$ satisfies the associated elliptic equation
\begin{align} \label{equ1}
-\Delta \phi + \omega \phi = |x|^{-b} |\phi|^{4-2b}\phi- |x|^{-a} |\phi|^{p-2} \phi \quad \mbox{in} \,\, \R^3.
\end{align}

\begin{defi} (Ground state)
{ A} solution $Q \in H^1(\R^3)$ to \eqref{equ1} is called a ground state if it is non-trivial and minimizes the associated action functional
$$
S_{\omega}(\phi):=\frac 12 \int_{\R^3} |\nabla \phi|^2 \,dx + \frac {\omega}{2} \int_{\R^3} |\phi|^2 \,dx +\frac{1}{p} \int_{\R^3} |x|^{-a} |\phi|^{p_1} \,dx - \frac{1}{6-2b} \int_{\R^3} |x|^{-b} |\phi|^{6-2b} \,dx,
$$
over all non-trivial { solutions to} \eqref{equ1}. That is,
$$
S_\omega(Q) = \inf \Big\{ S_\omega(\phi) \ : \ \phi \in H^1 \backslash \{0\} \; \text{ solves } \eqref{equ1} \Big\}.
$$
\end{defi}

The first result of the paper concerning the existence and nonexistence of solutions to \eqref{equ1} reads as follows.

\begin{thm} \label{existence}
{ Let $0<a,b<2$. Then the following assertions hold true.}
\begin{itemize}
\item [$(\textnormal{i})$] If $2<p \leq 6-2a$ and $\omega>0$, then there exists no solutions in $H^1(\R^3)$ to \eqref{equ1}.
\item [$(\textnormal{ii})$] If $2<p<6-2a$ and $\omega=0$, then there exists no solutions in ${H}^1(\R^3)$ to \eqref{equ1}. 
\item [$(\textnormal{iii})$] If $p=6-2a$, $b<a$ and $\omega=0$, then there exist positive ground states in $\dot{H}^1(\R^3)$ to \eqref{equ1}.
\item [$(\textnormal{iv})$] If $2<p \leq 6-2a$ and $\omega<0$, then there exists no radially symmetric nonnegative solutions in $H^1(\R^3)$ to \eqref{equ1}.
\end{itemize}
\end{thm}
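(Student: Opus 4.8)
The plan rests on two integral identities obtained by testing \eqref{equ1} against $\phi$ and against the dilation generator $x\cdot\nabla\phi$. Writing $K=\int_{\R^3}|\nabla\phi|^2$, $A=\int_{\R^3}|x|^{-a}|\phi|^{p}$ and $B=\int_{\R^3}|x|^{-b}|\phi|^{6-2b}$, the Nehari identity reads $K+\omega\|\phi\|_{L^2}^2+A-B=0$, while the Pohozaev identity, computed from $\frac{d}{d\lambda}S_\omega(\phi(\lambda\,\cdot))\big|_{\lambda=1}=0$ and simplified using $6-2b=2(3-b)$, reads $K+3\omega\|\phi\|_{L^2}^2+\frac{2(3-a)}{p}A-B=0$. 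Subtracting the two yields the master relation $2\omega\|\phi\|_{L^2}^2+\frac{6-2a-p}{p}A=0$. For these identities to be legitimate for $H^1$ solutions one first upgrades regularity and decay via elliptic estimates so that all boundary terms vanish; I would record this as a routine preliminary lemma.

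Parts (i) and (ii), together with one case of (iv), follow immediately from a sign analysis of the master relation, using $\|\phi\|_{L^2}^2\ge 0$, $A\ge 0$, and $6-2a-p\ge 0$ whenever $p\le 6-2a$. If $\omega>0$, both summands are nonnegative and must vanish, forcing $\|\phi\|_{L^2}^2=0$ and hence $\phi\equiv 0$, which is (i). If $\omega=0$ and $p<6-2a$, the coefficient of $A$ is strictly positive, so $A=0$ and again $\phi\equiv 0$, which is (ii). If $\omega<0$ and $p=6-2a$, the relation collapses to $2\omega\|\phi\|_{L^2}^2=0$, giving $\phi\equiv 0$; note that the strict case $p<6-2a$ is precisely the one left undecided, and that the degenerate case $p=6-2a,\ \omega=0$ is not ruled out here, consistent with the existence claim in (iii).

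The remaining case of (iv), namely $\omega<0$ with $2<p<6-2a$, is where radial symmetry and nonnegativity must be used. Here I would pass to the radial variable and study $v(r)=r\phi(r)$, which satisfies $-v''+\omega v=r^{1-b}\phi^{5-2b}-r^{1-a}\phi^{p-1}=:h(r)$ on $(0,\infty)$, with $v\ge 0$, $v(0)=0$ and $v\in L^2$. The radial Sobolev (Strauss) bound $|\phi(r)|\lesssim r^{-1}\|\phi\|_{H^1}$ shows that $h(r)/v(r)\to 0$ as $r\to\infty$, so that $v$ solves $v''+\big(|\omega|+o(1)\big)v=0$. By Sturm oscillation theory any nontrivial such solution changes sign infinitely often, contradicting $v\ge 0$; hence $\phi\equiv 0$. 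The only delicate point is making the asymptotic reduction rigorous, which is controlled by the decay of radial $H^1$ functions.

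The genuinely hard statement is the existence result (iii). With $\omega=0$ and $p=6-2a$ one has $S_0(\phi)=\frac12 K+\frac{1}{6-2a}A-\frac{1}{6-2b}B$, and all three quantities $K,A,B$ are invariant under the critical dilation $\phi\mapsto\lambda^{1/2}\phi(\lambda\,\cdot)$. Working in the rotation-invariant class $\dot H^1_{\mathrm{rad}}(\R^3)$, I would minimize $S_0$ over the Nehari manifold $\mathcal{N}=\{\phi\ne 0:\ K+A=B\}$, on which $S_0=\big(\tfrac12-\tfrac1{6-2b}\big)K+\big(\tfrac1{6-2a}-\tfrac1{6-2b}\big)A$ with both coefficients strictly positive \emph{exactly because} $b<a$. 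This is where the hypothesis $b<a$ enters decisively: since the focusing power $6-2b$ exceeds the defocusing power $6-2a$, the fibering map $t\mapsto S_0(t\phi)$ has the mountain-pass profile (positive near $0$, $\to-\infty$ as $t\to\infty$), the projection onto $\mathcal{N}$ is well defined, and $m:=\inf_{\mathcal{N}}S_0>0$ follows from the Hardy--Sobolev inequality $B\lesssim K^{3-b}$ combined with $K\le B$ on $\mathcal{N}$. By symmetric criticality, a critical point found in the radial class solves \eqref{equ1} in full.

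To attain $m$ I would extract a Palais--Smale sequence via Ekeland's principle; boundedness of $K$ and $A$ along $\mathcal{N}$ gives a bounded sequence in $\dot H^1$ with weak limit $Q$. The crux, and the main obstacle of the whole theorem, is compactness at this energy-critical level. Because the weights $|x|^{-a}$ and $|x|^{-b}$ are exactly homogeneous, the loss of compactness of the critical Hardy--Sobolev embedding is carried \emph{only} by the dilation group (translation bubbles are excluded, since the weights vanish at spatial infinity), and the rescaled bubbles solve the \emph{same} equation \eqref{equ1}. I would therefore establish a Hardy--Sobolev profile decomposition modulo dilations along which $S_0$ splits additively; since every bubble is a nontrivial solution it carries action $\ge m$, and a nonzero weak limit is itself a nontrivial solution of action $\ge m$. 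At the minimal level $m$ this forbids more than a single piece, so either $Q\ne 0$ with strong convergence, or $Q=0$ and a single rescaled bubble realizes $m$. In both cases a minimizer $Q$ exists; replacing $Q$ by $|Q|$ (which leaves $A,B$ unchanged and does not increase $K$) keeps it a minimizer, and rewriting \eqref{equ1} as $-\Delta Q+|x|^{-a}Q^{p-1}=|x|^{-b}Q^{5-2b}\ge 0$ places us under the strong maximum principle, yielding $Q>0$. The serious difficulty is precisely that the coexistence of two energy-critical inhomogeneous nonlinearities with a common scaling leaves no subcritical term to push the minimax level below a bubbling threshold in the usual Br\'ezis--Nirenberg fashion; the argument must instead exploit the exact scale invariance and the homogeneity of the weights to identify the bubbles with solutions of the same equation.
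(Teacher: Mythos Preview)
Your approach to (i) and (ii) is essentially identical to the paper's: both combine the Nehari identity with a Pohozaev-type identity (the paper uses the $L^2$-invariant dilation $\lambda^{3/2}\phi(\lambda\cdot)$, you use $\phi(\lambda\cdot)$, but the resulting ``master relation'' is the same up to a factor of~$2$).

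For (iv) you take a genuinely different route. The paper observes that, by the radial Strauss decay, the nonlinear potential is dominated by $-\omega>0$ for $|x|$ large, so a nonnegative radial solution satisfies $-\Delta u\ge0$ in an exterior domain; it then invokes a Liouville-type lemma of Moroz--Van~Schaftingen for nonnegative superharmonic functions in exterior domains to conclude $u\equiv0$. Your Sturm-oscillation argument on $v=r\phi$ is correct and more self-contained; both arguments ultimately encode the same obstruction, namely that $-\Delta+\omega$ with $\omega<0$ admits no positive solution at infinity. Note that neither argument actually needs the case split $p=6-2a$ versus $p<6-2a$ that you introduce: the superharmonic/oscillation argument covers all $2<p\le 6-2a$ uniformly.

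For (iii) your variational setup coincides with the paper's: when $p=6-2a$ and $\omega=0$ the Pohozaev constraint $K(\phi)=0$ used in the paper and your Nehari constraint $K+A=B$ define the \emph{same} manifold (the coefficient $\tfrac{3(p-2)+2a}{2p}$ equals $1$), so both proofs minimize the same functional over the same set, and $b<a$ enters in exactly the way you describe. The difference lies in the compactness step. You outline an abstract Hardy--Sobolev profile decomposition modulo dilations, noting that each bubble solves the same scale-invariant equation and hence has action $\ge m$. The paper instead runs a hands-on Lions-type argument: after Ekeland it tests the Palais--Smale sequence against annular cut-offs to show that, if the weak limit vanishes, the $|x|^{-b}|u_n|^{6-2b}$-mass on balls is quantized below by $S_b^{(6-2b)/(2-b)}$, and then performs an explicit rescaling $\widetilde u_n(x)=(R_n/R)^{1/2}u_n((R_n/R)x)$ to force a nontrivial weak limit. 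Both arguments exploit the same structural facts---the weights kill translation bubbling and the problem is exactly scale-invariant---but the paper's version is more concrete and is carried out in the full space $\dot H^1$, not in the radial subclass. Your restriction to $\dot H^1_{\mathrm{rad}}$ is in fact unnecessary (the inhomogeneity already breaks translations), and dropping it would spare you the extra step of matching the radial constrained minimum with the global ground-state level.
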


{ To prove the nonexistence of solutions to \eqref{equ1} for $\omega \geq 0$, one needs to make use of Pohozaev identity satisfied by solutions to \eqref{equ1}, see Lemma \ref{ph}. While, to prove the nonexistence of solutions to \eqref{equ1} for $\omega<0$, one needs to take advantage of \cite[Lemma 4.2]{MS}. Finally, to establish the existence of solutions to \eqref{equ1} for $\omega=0$, we shall introduce the following minimization problem,
\begin{align} \label{min0}
m_0:=\inf_{\phi \in P} E(\phi),
\end{align}
where
$$
P:=\left\{\dot{H}^1(\R^3)\backslash\{0\} : K(\phi)=0 \right\},
$$
\begin{align*}
K(\phi):=\int_{\R^3} |\nabla \phi|^2 \,dx+\frac{3(p-2)+2a}{2p}\int_{\R^3}|x|^{-a}|\phi|^{p}\, dx-\int_{\R^3}|x|^{-b}|\phi|^{6-2b}\, dx.
\end{align*}
Here $K(\phi)=0$ is the so-called Pohozaev identity related to \eqref{equ1} and $P$ is the Pohozaev manifold, which is indeed a natural constraint. It is straightforward to find that any minimizer to \eqref{min0} is a ground state to \eqref{equ1}. To detect the existence of minimizers, the essential difficulty is to check the compactness of sequences due to the presence of double energy critical exponents $p=6-2a$ and $q=6-2b$. Our discussion is carried out in the spirit of the Lions concentration compactness principle.
}

\begin{rem}
{\rm We { now} briefly discuss some quantitative properties of the solutions to \eqref{equ1} derived in Theorem \ref{existence} for $p=6-2a$, $b<a$ and $\omega=0$ .
\begin{itemize}
\item [$(\textnormal{i})$] Using the polarization arguments developed in \cite{BWW} and proceeding as the proof of \cite[Theorem 1.1]{GMS}, we can similarly get that any ground state $\phi$ is radially symmetric and decreasing.

\item [$(\textnormal{ii})$] In view of \cite[Theorem 1.3]{GMS}, we are able to derive that any positive, radially symmetric and decreasing solution $\phi$ enjoys the optimal decay $\phi(x) \sim |x|^{-1}$ as $|x| \to \infty$. This clearly leads to $\phi \not \in L^2(\R^3)$.
\item [$(\textnormal{iii})$] Observe that 
$$
\frac{4-a}{8-2a} -\frac{2a+4(6-2a)}{(6-2b)(8-2a)} +\frac{b}{6-2b}=0.
$$
As an application of \cite[Theorem 1.3]{GMS}, we then know that there exists at most one positive, radially symmetric and decreasing solution.
\end{itemize}
}
\end{rem}

\begin{thm} \label{nonexistence}
Let $0<b<a<2$ and $2<p < 6-2a$. Define
\begin{align} \label{minn}
m:=\inf_{\phi \in P} E(\phi), \quad P:=\left\{ \phi \in H^1(\R^3) \backslash \{0\} : K(\phi)=0\right\}.
\end{align}
Then there exists no minimizers to \eqref{minn}. Moreover, there holds that
\begin{align} \label{cm}
m= E^c(Q),
\end{align}
where 
{ $$
E^c(u):=\frac12 \int_{\R^3 }|\nabla u|^2 \,dx -\frac1{6-2b}\int_{\R^3}|x|^{-b}|u|^{6-2b}\,dx,
$$ }
$Q \in \dot{H}^1(\R^3)$ is the ground state to the equation
\begin{align}\label{defq}
-\Delta Q =|x|^{-b} |Q|^{4-2b}Q, \quad Q(x)=\left(1+\frac{|x|^{4-2b}}{3-b}\right)^{\frac{1}{b-2}}.
\end{align}
\end{thm}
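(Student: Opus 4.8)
The plan is to prove both assertions \emph{simultaneously} by establishing the matching bounds $m\ge E^c(Q)$, with strict inequality for every admissible competitor (which already rules out minimizers), and $m\le E^c(Q)$. The first move is to rewrite $E$ on the constraint. For $\phi\in P$ the identity $K(\phi)=0$ reads
\[
\int_{\R^3}|x|^{-b}|\phi|^{6-2b}\,dx=\int_{\R^3}|\nabla\phi|^2\,dx+\frac{3(p-2)+2a}{2p}\int_{\R^3}|x|^{-a}|\phi|^{p}\,dx,
\]
and substituting this into $E$ eliminates the critical term, giving
\[
E(\phi)=\frac{2-b}{6-2b}\int_{\R^3}|\nabla\phi|^2\,dx+\frac{18-2a-4b-3p}{2p(6-2b)}\int_{\R^3}|x|^{-a}|\phi|^{p}\,dx.
\]
The first coefficient is positive since $b<2$, and the crucial observation is that the second is positive too: from $p<6-2a$ one gets $18-3p-2a>4a$, whence $18-2a-4b-3p>4(a-b)>0$ precisely because $b<a$. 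This is the step where both structural hypotheses $b<a$ and $p<6-2a$ are genuinely used.

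For the lower bound, note that the weight $|x|^{-a}$ is a.e. positive, so $\int_{\R^3}|x|^{-a}|\phi|^{p}\,dx>0$ for $\phi\neq0$; the displayed identity then gives $E(\phi)>\frac{2-b}{6-2b}\int_{\R^3}|\nabla\phi|^2\,dx$. To bound the Dirichlet energy from below I would invoke the sharp weighted Sobolev (Caffarelli--Kohn--Nirenberg) inequality $\int_{\R^3}|x|^{-b}|u|^{6-2b}\,dx\le C_b\big(\int_{\R^3}|\nabla u|^2\,dx\big)^{3-b}$, whose extremal (up to the dilation $u\mapsto\mu^{1/2}u(\mu\,\cdot)$) is exactly the bubble $Q$ of \eqref{defq}, together with the fact that $Q$ solves \eqref{defq}, i.e. $\int_{\R^3}|\nabla Q|^2\,dx=\int_{\R^3}|x|^{-b}|Q|^{6-2b}\,dx$. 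Writing $K^c(u):=\int_{\R^3}|\nabla u|^2\,dx-\int_{\R^3}|x|^{-b}|u|^{6-2b}\,dx$, the constraint $K(\phi)=0$ forces $\int_{\R^3}|\nabla\phi|^2\,dx\le\int_{\R^3}|x|^{-b}|\phi|^{6-2b}\,dx$ (the subcritical term enters with a nonnegative coefficient), and combining this with the sharp inequality yields $\int_{\R^3}|\nabla\phi|^2\,dx\ge\int_{\R^3}|\nabla Q|^2\,dx$. Hence $E(\phi)>\frac{2-b}{6-2b}\int_{\R^3}|\nabla Q|^2\,dx=E^c(Q)$ strictly, for \emph{every} $\phi\in P$; this establishes $m\ge E^c(Q)$ and the nonexistence of minimizers at once.

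The matching upper bound is where I expect the main difficulty, for two reasons: $Q\in\dot H^1\setminus L^2$ (it decays like $|x|^{-1}$), so it is not itself an admissible $H^1$ competitor, and one must exploit the \emph{absence of scaling invariance} to render the defocusing perturbation negligible. I would proceed in three steps. First, truncate, $Q_R:=\chi(\cdot/R)\,Q\in H^1$ with $\chi$ a smooth cutoff, so that both $\int_{\R^3}|\nabla Q_R|^2\,dx$ and $\int_{\R^3}|x|^{-b}|Q_R|^{6-2b}\,dx$ converge to $\int_{\R^3}|\nabla Q|^2\,dx$ as $R\to\infty$. Second, apply the $\dot H^1$-invariant concentration $Q_R\mapsto\mu^{1/2}Q_R(\mu\,\cdot)$, under which the two critical quantities are unchanged while $\int_{\R^3}|x|^{-a}|\cdot|^{p}\,dx$ is multiplied by $\mu^{p/2+a-3}$, an exponent that is negative again because $p<6-2a$; choosing $\mu=\mu_R\to\infty$ fast enough drives the subcritical term to $0$. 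The resulting functions $\Phi_R$ then satisfy $K(\Phi_R)\to0$ and $E(\Phi_R)\to E^c(Q)$.

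Third, to land exactly on $P$, I would dilate $\Phi_R(\cdot/t)$ and solve $K\big(\Phi_R(\cdot/t)\big)=0$ in $t$. A root exists by a sign change: the function behaves like $t\int_{\R^3}|\nabla\Phi_R|^2\,dx>0$ as $t\to0^+$ and tends to $-\infty$ as $t\to\infty$, where $b<a$ guarantees that the critical term (with the largest dilation exponent $3-b$) dominates. Moreover $t_R\to1$, which follows by comparing $K\big(\Phi_R(\cdot/t)\big)$ with its limit profile $\big(\int_{\R^3}|\nabla Q|^2\,dx\big)\,(t-t^{3-b})$, vanishing and crossing transversally at $t=1$ since $3-b>1$. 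Passing to the limit then gives $E\big(\Phi_R(\cdot/t_R)\big)\to E^c(Q)$, so $m\le E^c(Q)$. Together with the lower bound this yields $m=E^c(Q)$ together with the nonexistence of minimizers, completing the proof.
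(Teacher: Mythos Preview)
Your proof is correct and takes a genuinely different route from the paper's.

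The paper obtains nonexistence indirectly: any minimizer of \eqref{minn} would be a nontrivial $H^1$ solution of \eqref{equ1} with $\omega=0$, which Theorem~\ref{existence}(ii) excludes via the Pohozaev identity. For the equality $m=E^c(Q)$ the paper never builds a test sequence; instead it passes through the auxiliary functional $I$ and the chain of variational characterizations in Lemmas~\ref{clem1} and~\ref{clem2}. Using the $L^2$-scaling $u_\lambda=\lambda^{3/2}u(\lambda\cdot)$ and then the $\dot H^1$-scaling $u^\lambda=\lambda^{1/2}u(\lambda\cdot)$, the constraint $K=0$ is relaxed first to $K\le0$ and then to $K^c\le0$, on which $I$ reduces (after adding a nonpositive multiple of $K^c$) to $\tfrac{2-b}{2(3-b)}\|\nabla u\|^2$; the infimum of the latter is read off from the sharp constant $C_*$.

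Your approach trades this chain of scalings for a direct argument. The pointwise strict inequality $E(\phi)>\tfrac{2-b}{6-2b}\|\nabla Q\|^2=E^c(Q)$ on $P$ gives the lower bound and, once the upper bound is in hand, nonexistence simultaneously, bypassing both the Pohozaev step and the functional $I$. The price is the explicit upper-bound construction (truncated concentrated bubble projected back to $P$ by the dilation $\Phi_R(\cdot/t)$), which the paper avoids. One minor remark: your sentence ``this establishes $m\ge E^c(Q)$ and the nonexistence of minimizers at once'' is slightly premature---the strict inequality alone does not exclude a minimizer at some level $m>E^c(Q)$; nonexistence follows only after the upper bound yields $m=E^c(Q)$. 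Your overall plan already accounts for this. An incidental benefit of your argument is that it covers the full stated range $2<p<6-2a$, whereas the paper's route through Lemmas~\ref{maximum}, \ref{clem1}, \ref{clem2} is formulated under the additional hypothesis $p>2+\tfrac{4-2a}{3}$.
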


{ The nonexistence of minimizers is a direct consequence of Theorem \ref{existence}, because any minimizer corresponds to a solution to \eqref{equ1}. To detect \eqref{cm}, we need to invoke Lemma \ref{clem1}, which gives an alternative variational characterization of the ground state energy level $m$.}

\begin{rem}
{\rm 
The aforementioned ground state {$ Q \in \dot H^1(\R^3)$} serves as a minimizer { of} the { minimization} problem
\begin{equation} \label{min2}
\frac1{C_*} :=\inf_{ u\in\dot H^1(\R^3) \backslash\{0\}}\frac{\|\nabla u\|}{\left(\int_{\R^3}|x|^{-b}|u|^{6-2b}\,dx\right)^\frac{1}{6-2b}},
\end{equation}
where the notation $\|\cdot\|$ represents the $L^2-$norm { throughout the paper}. One can easily find that 
\begin{equation}\label{min'}
\frac1{C_*}= \frac{\|\nabla{Q}\|}{\left(\int_{\R^3}|x|^{-b}|{Q}|^{6-2b}\,dx\right)^\frac1{6-2b}}=\|\nabla{Q}\|^\frac{2-b}{3-b}.
\end{equation}
}
\end{rem}

Next, we will analyze the dynamic behavior of solutions to the Cauchy problem associated with Equation (\ref{equ}). We will initiate this by outlining the well-posedness in $H^1(\mathbb{R}^3)$.
\begin{prop}
{ Let} $0 < a, b < 2$ and $2 < p \leq 6 - 2a$. Then, for any $u_0 \in H^1(\mathbb{R}^3)$, there exist { $T_{\text{max}} > 0$ and a unique solution $u \in C([0, T_{\text{max}}), H^1(\mathbb{R}^3))$ to the Cauchy problem for \eqref{equ} with $u(0)=u_0$} satisfying the conservation of { the mass and the energy}. That is, for any $t \in [0, T_{\text{max}})$,
$$
M(u(t))=M(u_0), \quad E(u(t))=E(u_0),
$$
where the mass and the energy are defined by \eqref{Mass} and \eqref{Ener}, respectively.

Furthermore, the solution mapping $u_0 \mapsto u$ is continuous from $H^1(\mathbb{R}^3)$ to $C([0, T_{\text{max}}), H^1)$. It also holds that either $T_{\text{max}} < +\infty$ or $\displaystyle\lim_{t \to T^-_{\text{max}}} \|\nabla u(t)\| = +\infty.$
\end{prop}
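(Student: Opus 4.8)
The plan is to solve the Cauchy problem through its Duhamel formulation
$$u(t)=e^{\textnormal{i} t\Delta}u_0-\textnormal{i}\int_0^t e^{\textnormal{i}(t-s)\Delta}\Big(|x|^{-a}|u|^{p-2}u-|x|^{-b}|u|^{4-2b}u\Big)\,ds$$
and to realize the right-hand side as a contraction on a closed ball of a Strichartz-type space, in the spirit of the Kato and Cazenave--Weissler scheme for energy-critical problems. Concretely, I would fix a Schr\"odinger-admissible pair $(q,r)$ (so $\tfrac2q+\tfrac3r=\tfrac32$) adapted to the energy-critical scaling and work in
$$X_T:=C([0,T];H^1(\R^3))\cap L^q([0,T];W^{1,r}(\R^3)),$$
equipping the ball with the weaker metric given by the Strichartz norm \emph{without} derivatives, on which the nonlinear map is Lipschitz (the derivative-level estimate alone does not close a contraction in the critical regime). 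The crucial point is that the free evolution $e^{\textnormal{i} t\Delta}u_0$ already lies in the global-in-time critical Strichartz space for $u_0\in H^1$, so the relevant critical norm of $e^{\textnormal{i} t\Delta}u_0$ on $[0,T]$ can be made small by shrinking $T$; this profile-dependent smallness is exactly what allows the fixed point to close in the energy-critical setting, where the lifespan cannot be controlled by $\|u_0\|_{H^1}$ alone.

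The heart of the argument is the nonlinear estimate in the dual Strichartz norm. Since the singular weights $|x|^{-a},|x|^{-b}$ break scaling, I would split $\R^3=\{|x|\le1\}\cup\{|x|>1\}$ and combine weighted H\"older inequalities with Sobolev embeddings; the key weighted ingredient is the Hardy--Sobolev (Caffarelli--Kohn--Nirenberg) inequality $\big(\int_{\R^3}|x|^{-b}|u|^{6-2b}\,dx\big)^{1/(6-2b)}\lesssim\|\nabla u\|$, valid for $0\le b<2$, together with its energy-subcritical analogue for the $|x|^{-a}$ term. Distributing the $W^{1,r}$-derivative across $|u|^{4-2b}u$ by the fractional product and chain rules yields a bound of the schematic form
$$\big\| |x|^{-b}|u|^{4-2b}u \big\|_{\textnormal{dual Strichartz}}\lesssim \|u\|_{L^\infty_tH^1}^{\theta}\,\|u\|_{L^q_tW^{1,r}}^{5-2b-\theta}$$
for the defocusing critical term, and an analogous estimate \emph{with a strictly positive power of} $T$ for the focusing term $|x|^{-a}|u|^{p-2}u$ whenever $p<6-2a$, which is therefore a lower-order (energy-subcritical) perturbation. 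The borderline case $p=6-2a$ is handled exactly like the defocusing critical term, again using profile-dependent smallness rather than a gain in $T$.

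With these estimates the map is a contraction once $T$ (equivalently, the critical free-evolution norm on $[0,T]$) is small, producing a unique local solution $u\in X_T$; uniqueness in $C([0,T];H^1)$ and the Lipschitz continuity of $u_0\mapsto u$ follow from the same difference estimates. Conservation of mass and energy is verified first for smooth solutions by pairing the equation with $\bar u$ and with $\partial_t\bar u$ and taking imaginary/real parts, and then transferred to $H^1$ solutions by regularizing $u_0$ in $H^1$ and passing to the limit via the continuous dependence just established. Finally, setting $T_{\max}$ to be the supremum of existence times and iterating the local theory gives the blow-up alternative: if $T_{\max}<+\infty$ then $\|\nabla u(t)\|\to+\infty$ as $t\to T_{\max}^-$, since otherwise the free-evolution smallness could be re-established near $T_{\max}$ and the solution continued.

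The main obstacle I expect is the simultaneous presence of the two difficulties. The double energy-criticality (the defocusing term always, and the focusing term when $p=6-2a$) forces the profile-dependent, rather than norm-dependent, lifespan and rules out any gain of a positive power of $T$ in the critical estimates; meanwhile the singular weights $|x|^{-a},|x|^{-b}$ obstruct the direct use of translation-invariant Strichartz theory and require the near-origin/far-field splitting and weighted Hardy--Sobolev bounds above. Checking that a single admissible pair $(q,r)$ respects \emph{both} weights while still closing the derivative-level estimate is the delicate technical point of the construction.
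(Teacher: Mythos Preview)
Your sketch is precisely the Strichartz/contraction approach the paper has in mind: the authors do not give a proof at all but simply refer to Guzm\'an \cite{guz}, who treats the single-nonlinearity INLS by exactly the scheme you outline (near-origin/far-field splitting of the weight, H\"older and Hardy--Sobolev bounds to control the dual Strichartz norm, profile-dependent smallness in the critical case). So in spirit and in method you are aligned with the paper.

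One point deserves more care. Your justification of the blow-up alternative --- ``if $T_{\max}<\infty$ then $\|\nabla u(t)\|\to\infty$, since otherwise the free-evolution smallness could be re-established near $T_{\max}$'' --- is the subcritical argument and does not close as written in the energy-critical regime. Because the local lifespan depends on the \emph{profile} of the data (through the smallness of $\|e^{\textnormal{i}\cdot\Delta}u(t)\|_{L^q_tW^{1,r}}$ on short intervals) rather than on $\|u(t)\|_{H^1}$ alone, a uniform bound on $\|\nabla u(t)\|$ does not by itself yield a uniform lower bound on the continuation time. What the contraction argument actually gives is the Strichartz-norm criterion $\|u\|_{L^q([0,T_{\max});W^{1,r})}=+\infty$ when $T_{\max}<\infty$; upgrading this to $\|\nabla u(t)\|\to\infty$ requires an additional step (or a separate argument specific to the inhomogeneous setting). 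The paper does not supply that step either, so this is a place where both the statement as written and your sketch lean on \cite{guz} for the fine print.
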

{ It is noteworthy that the proof of the aforementioned proposition can be completed by following closely the steps outlined in \cite{guz}, where the well-posedness of solutions to equations with single inhomogeneous nonlinearity was tackled}. Consequently, the  details are omitted for brevity.

In accordance with \cite{ps}, we introduce the Payne-Sattinger sets as follows,
\begin{align}
\mathcal{K}^-:=\left\{\phi \in H^1_{rad}(\R^3) \backslash \{0\} : E(\phi)<m,\, K(\phi)  < 0 \right\},\label{k--}\\
\mathcal{K}^+:=\left\{\phi \in H^1_{rad}(\R^3) \backslash \{0\} : E(\phi)<m,\, K(\phi) \geq 0 \right\}, \label{k++}
\end{align}
{ where $m \in \R$ is defined by \eqref{minn}.} We first investigate the scattering of solutions to the Cauchy problem { for} \eqref{equ} when initial data { belong to} the set $\mathcal{K}^+$. Our main result in this context can be stated as follows.

\begin{thm} \label{scattering}
Let ${ 0<a<1}$, $0<b<\frac43$, { $a<b$} and $2+\frac{4-2a}{3} < p< 6-2a$. { Then there exists a unique global solution $u \in C(\R, H^1_{rad}(\R^3))$ to the Cauchy problem for \eqref{equ} with $u(0) \in \mathcal{K}^+$}. Furthermore, { if $p\geq 4$}, { then} the solution $u$ scatters in $H^1_{rad}(\R^3)$.
\end{thm}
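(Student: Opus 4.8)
The plan is to follow the now-standard road map for scattering below the ground-state threshold, but — as in the authors' earlier treatment of \eqref{INLS-2} in \cite{GMS} — to bypass profile decomposition by combining a virial/Morawetz spacetime estimate with Tao's scattering criterion. The argument divides into two independent parts: (a) a variational \emph{energy-trapping} step that promotes the local solution to a global one and yields a uniform $H^1$ bound on trajectories issuing from $\mathcal{K}^+$, and (b) a spacetime a priori estimate forcing the solution to decouple into a linear wave as $t\to\infty$.

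\textbf{Energy trapping and global existence.} First I would show that $\mathcal{K}^+$ is invariant under the flow and that every trajectory starting in $\mathcal{K}^+$ is uniformly bounded in $H^1_{rad}(\R^3)$. Using the sharp inequality \eqref{min2}--\eqref{min'} and the identity $m=E^c(Q)$ from Theorem \ref{nonexistence} (via the alternative variational characterization in Lemma \ref{clem1}), I combine $E(\phi)<m$ with $K(\phi)\ge 0$: since $E$ and $M$ are conserved while $K$ cannot vanish on $\{E<m\}\setminus\{0\}$, a continuity-in-time argument shows that $K(u(t))\ge 0$ persists and produces a uniform sub-threshold gap, i.e.\ coercivity of $K$ along the flow. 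The focusing energy-critical potential is thereby kept strictly subdominant, and the subcritical defocusing term $\int_{\R^3}|x|^{-a}|u|^{p}\,dx$ is absorbed by a weighted Gagliardo--Nirenberg inequality at fixed mass, giving $\sup_{t}\|u(t)\|_{H^1}<\infty$. The blow-up alternative of the local theory then yields a global solution $u\in C(\R,H^1_{rad}(\R^3))$; this already covers the first assertion of the theorem for all admissible $p$.

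\textbf{Local theory, stability, and Tao's criterion.} For the scattering statement ($p\ge 4$) I would record the weighted Strichartz and nonlinear estimates adapted to the inhomogeneous nonlinearities $|x|^{-a}|u|^{p-2}u$ and $|x|^{-b}|u|^{4-2b}u$ in the radial class, where improved (weighted) Strichartz bounds are available. These deliver small-data scattering, a perturbation/stability lemma, and a version of Tao's scattering criterion: a global, $H^1$-bounded radial solution scatters provided that along some sequence $t_n\to\infty$ a suitably localized spacetime quantity becomes small. The structural hypotheses $0<a<1$, $0<b<\tfrac43$ and $a<b$ are precisely what render the singular weights locally integrable and let the critical/subcritical nonlinear estimates close, while the restriction $p\ge 4$ is needed to run the fractional product/chain rule for the perturbation inside the critical Strichartz spaces.

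\textbf{Morawetz estimate and conclusion; main obstacle.} The heart of the matter is a truncated virial--Morawetz estimate with a radial weight $\psi_R\approx |x|$ on $\{|x|\le R\}$. Differentiating the associated Morawetz action twice and invoking the coercivity from the trapping step — which bounds the integrand below by the focusing critical density \emph{plus} the defocusing perturbation density — I expect an estimate of the schematic form
\[
\int_0^{\infty}\!\!\int_{|x|\le R}\Big(|x|^{-b}|u|^{6-2b}+|x|^{-a}|u|^{p}\Big)\,dx\,dt \ \lesssim\ R\,\sup_{t}\|u(t)\|_{H^1}^2,
\]
with the truncation and weight errors controlled through the pointwise decay of radial $H^1$ functions. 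Dividing by $R$ and letting the time interval grow forces the localized nonlinear density to vanish along a sequence $t_n\to\infty$; feeding this into Tao's criterion and bootstrapping via Strichartz yields a finite global scattering norm, hence scattering in $H^1_{rad}(\R^3)$. I anticipate that the decisive difficulty lies exactly in this Morawetz step: securing a definite coercive sign for the integrand when a focusing energy-critical term competes with the defocusing perturbation — which is where $p\ge 4$ and the relations between $a$ and $b$ are genuinely used — and in neutralizing the double criticality that appears as $p$ approaches $6-2a$.
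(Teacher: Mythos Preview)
Your architecture --- energy trapping $\Rightarrow$ global existence, then virial/Morawetz $\Rightarrow$ Tao's criterion $\Rightarrow$ scattering --- is indeed what the paper does. But you have located both the main difficulty and the role of the hypothesis $p\ge 4$ in the wrong place, and you have glossed over the one genuinely nontrivial ingredient.

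\textbf{The Morawetz step is not the bottleneck.} In the virial identity the defocusing perturbation $|x|^{-a}|u|^{p}$ enters with a \emph{good} sign and is simply dropped; neither $p\ge4$ nor the relation between $a$ and $b$ is used there. The coercivity needed in Lemma~\ref{vrl-mrw} is not coercivity of $K$ but rather
\[
\|\nabla(\psi_R u)\|^2-\int_{\R^3}|x|^{-b}|\psi_R u|^{6-2b}\,dx\ \gtrsim\ \int_{\R^3}|x|^{-b}|\psi_R u|^{6-2b}\,dx,
\]
which follows purely from the sub-threshold gradient bound $\|\nabla u\|<\|\nabla Q\|$ (Lemmas~\ref{coer1}--\ref{coer}). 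So your schematic Morawetz bound should carry only the critical density on the left, not the defocusing one, and it must be stated on a finite slab with an $R^{-\gamma}$ error; the $\int_0^\infty$ version you wrote does not hold.

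\textbf{Where $p\ge4$ actually enters.} It is not a chain-rule issue. It appears only in the proof of the scattering criterion (Lemma~\ref{crt}), in the H\"older splitting of the defocusing source term on the short interval $J_2=[T-\epsilon^{-\beta},T]$: one needs the factor $\|u\|_{L^q(J_2,L^\infty)}^{p-4}$ with a nonnegative exponent (see the estimate of $(I)_1$ in \eqref{04.27}).

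\textbf{The piece you are missing.} Your energy-trapping argument gives $\sup_t\|u(t)\|_{H^1}<\infty$, but for an equation with a focusing \emph{energy-critical} term this does not by itself control spacetime Strichartz norms on compact intervals, and those are precisely what Tao's criterion consumes (the bounds \eqref{004.30}, \eqref{004.27}, \eqref{04.27}, \eqref{est1}, \eqref{4.24} all invoke $\|\langle\nabla\rangle u\|_{L^2(J_2,L^6)}\lesssim 1+|J_2|^{1/2}$ from \eqref{3.19}). The paper obtains \eqref{3.19} not from standard local theory but by treating \eqref{equ} as a perturbation of the pure focusing energy-critical INLS $\textnormal{i}\partial_t v+\Delta v=-|x|^{-b}|v|^{4-2b}v$, for which global well-posedness and finite Strichartz norm under the threshold are already known from \cite{chl}; this is Lemmas~\ref{lm1}--\ref{prt}--\ref{crcv}. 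Your ``perturbation/stability lemma'' needs to be exactly this perturbation-off-the-critical-flow device, not the usual small-error stability statement; without it you cannot close the estimates in Lemma~\ref{crt}.
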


{ To achieve the global well-posedness of solutions in Theorem \ref{scattering}, we are inspired by \cite{XZ}. First we apply perturbation arguments to obtain a good local well-posedness of solutions to the Cauchy problem for \eqref{equ} with initial data belonging to $\mathcal{K}^+$, see Lemma \ref{prt}. Then, by utilizing a coercivity property of solutions, see Lemma \ref{crcv}, we are able to conclude the global well-posedness of solutions.
}

The methods employed to prove the scattering { in \cite{chl, GM}} are grounded in the concentration-compactness-rigidity technique pioneered by Kenig and Merle \cite{KM} in the context of the energy-critical NLS equation. { However, to establish the scattering of solutions in Theorem \ref{scattering}, we alternatively apply the more recent approach introduced by Dodson and Murphy \cite{dm}, building upon Tao's scattering criterion \cite{tao} and Virial/Morawetz estimates.
}

\begin{rem}
{\em
 { The condition $a<b$ is applied to derive the variational characterization of the ground state energy level $m$ defined by \eqref{min'}, see Lemma \ref{clem1}, by which Lemma \ref{k} holds true. Note that Lemma \ref{k} is essential to prove the global existence of solutions, which indeed guarantees the condition \eqref{as5} in Lemma \ref{crcv} is valid. Furthermore, the condition $0<a<1$ is due to the application of \cite[Lemma 3.4]{guz}, which is adapted to estimate the Strichartz norm related to the energy sub-critical nonlinearity, see Lemma \ref{prt}. The condition $0<b<\frac 43$, which appears in \cite[Theorem 1]{chl}, is required to control the Strichartz norm related to the energy critical nonlinearity, see Lemmas \ref{lm1} and \ref{prt}.
These restrictions seem hard to circumvent in our arguments.}
}
\end{rem}

\begin{rem}
{\em The condition $p\geq 4$ { is imposed on the proof of the scattering criteria, see Lemma \ref{crt}, which} is technical and related to our methodology. We believe that it could be potentially relaxed.}
\end{rem}


Lastly, we will investigate the blow-up phenomenon. The ensuing result illustrates that the blow-up of solutions to the Cauchy problem for \eqref{equ} occurs when the initial data belong to the set $\mathcal{K}^-$.

\begin{thm} \label{blowup}
Let $0<b<a<2$ and $2+\frac{4-2a}{3} < p< 6-2a$. Let $u\in C([0, T_{max}), H^1_{rad}(\R^3))$ be the solution to the Cauchy problem for \eqref{equ} with $u(0) \in \mathcal{K}^-$. Then $u$ blows up in finite time.
\end{thm}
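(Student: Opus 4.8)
The plan is to run a localized virial (convexity) argument, exploiting that the exact variance $V(t):=\int_{\R^3}|x|^2|u(t,x)|^2\,dx$ satisfies $V''(t)=8K(u(t))$ (a computation that reproduces precisely the functional $K$ of \eqref{minn}, since the critical term carries the virial coefficient $2$ and the subcritical one carries $\tfrac{3(p-2)+2a}{2p}$). As $u_0\in H^1_{rad}$ need not have finite variance, I would replace $|x|^2$ by a radial cutoff $\chi_R$ with $\chi_R(x)=|x|^2$ for $|x|\le R$, $\chi_R''\le 2$ and $|\partial^\alpha\chi_R|\lesssim R^{2-|\alpha|}$, and set $V_R(t):=\int_{\R^3}\chi_R|u|^2\,dx$, which is finite. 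A direct computation (justified for $H^1$ solutions by the usual regularization/approximation procedure) gives $V_R''(t)=8K(u(t))+\mathcal E_R(t)$ with $\mathcal E_R$ supported in $\{|x|\ge R\}$. The key structural gain is that, for radial $u$, the Hessian term is $4\int\chi_R''|\nabla u|^2$, so its contribution to $\mathcal E_R$ is $\int_{|x|\ge R}(4\chi_R''-8)|\nabla u|^2\le 0$ and may simply be discarded when bounding $V_R''$ from above; this is what frees us from needing any a priori control of $\|\nabla u(t)\|$.

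Before estimating the error, I would record two facts about $K$ along the flow. First, $\mathcal K^-$ is invariant: $E(u(t))=E(u_0)<m$ is conserved, $u(t)\neq 0$ by mass conservation, and $K(u(t))$ cannot vanish (a zero would place $u(t)$ on the manifold $P$ of \eqref{minn}, forcing $E(u(t))\ge m$), so by continuity $K(u(t))<0$ throughout $[0,T_{\max})$. Second, the variational characterization of $m$ from Lemma \ref{clem1} yields quantitative coercivity. Introducing $\widetilde E:=E-\tfrac{1}{6-2b}K$, in which the critical term cancels,
\[
\widetilde E(\phi)=\frac{2-b}{6-2b}\int_{\R^3}|\nabla\phi|^2\,dx+\frac{18-2a-4b-3p}{2p(6-2b)}\int_{\R^3}|x|^{-a}|\phi|^p\,dx,
\]
both coefficients being positive precisely because $0<b<a<2$ and $p<6-2a$. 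Since $\widetilde E=E$ on $\{K=0\}$ and $\widetilde E$ is strictly increasing along the $L^2$-scaling $u_\lambda:=\lambda^{3/2}u(\lambda\cdot)$ (for which $K(u)=\frac{d}{d\lambda}\big|_{\lambda=1}E(u_\lambda)$, and $\widetilde E(u_\lambda)$ is a sum of the powers $\lambda^2$ and $\lambda^{\frac{3p}{2}-3+a}$ with positive exponents), scaling $u(t)$ down to the first $\lambda_0\in(0,1)$ with $K((u(t))_{\lambda_0})=0$ — which exists because $p>2+\tfrac{4-2a}{3}$ makes the gradient term dominate as $\lambda\to0^+$ — gives $\widetilde E(u(t))>\widetilde E((u(t))_{\lambda_0})=E((u(t))_{\lambda_0})\ge m$. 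Feeding this into $E=\widetilde E+\tfrac1{6-2b}K$ yields the uniform bound
\[
K(u(t))\le (6-2b)\big(E(u_0)-m\big)=:-\delta_0<0.
\]

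The core is to show $V_R''(t)\le-\delta<0$ uniformly for $R$ large. I would play two expressions for $8K$ against each other: the coercivity above, and the ``gradient-negative'' form obtained by eliminating the critical term through energy conservation,
\[
8K(u)=-8(2-b)\|\nabla u\|^2+\frac{4(3p+2a+4b-18)}{p}\int_{\R^3}|x|^{-a}|u|^p\,dx+8(6-2b)E(u_0),
\]
whose two potential coefficients are again negative under $0<b<a<2$, $p<6-2a$. Interpolating with a weight $\theta\in(0,1)$ and discarding the negative potential terms,
\[
8K(u)\le 8\theta(6-2b)(E(u_0)-m)-8(1-\theta)(2-b)\|\nabla u\|^2+8(1-\theta)(6-2b)E(u_0).
\]
Meanwhile the $O(R^{-2}\|u_0\|^2)$ biharmonic term and the nonlinear tails of $\mathcal E_R$ are controlled by the $3$D radial Sobolev (Strauss) bound $|u(x)|\le C|x|^{-1}\|u\|^{1/2}\|\nabla u\|^{1/2}$ together with mass conservation: since $b>0$ and $p<6$, the powers $\|\nabla u\|^{2-b}$ and $\|\nabla u\|^{(p-2)/2}$ carry exponents strictly below $2$, so Young's inequality gives $\mathcal E_R(t)\le\epsilon\|\nabla u(t)\|^2+C_\epsilon R^{-\kappa}$ for some $\kappa>0$, uniformly in $t$. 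Choosing $\epsilon=4(1-\theta)(2-b)$ absorbs the gradient term and leaves
\[
V_R''(t)\le 8(1-\theta)(6-2b)E(u_0)-8\theta(6-2b)\big(m-E(u_0)\big)+C_\epsilon R^{-\kappa}.
\]
Because $m>0$ (Theorem \ref{nonexistence} and $E^c(Q)=\tfrac{2-b}{6-2b}\|\nabla Q\|^2$) and $E(u_0)<m$, fixing $\theta\in(\max\{0,E(u_0)/m\},1)$ makes the bracketed constant strictly negative, and taking $R$ large gives $V_R''(t)\le-\delta<0$ for all $t$. Since $V_R\ge0$, double integration forces $V_R$ to become negative in finite time, a contradiction; hence $T_{\max}<\infty$.

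The main obstacle is exactly this error control: because the focusing nonlinearity is energy-critical, the tail $\int_{|x|\ge R}|x|^{-b}|u|^{6-2b}$ must be dominated even though $\|\nabla u(t)\|$ is expected to blow up. Two ingredients make it work: the radial structure, which both annihilates the gradient error and supplies the Strauss decay in $R$, and the interpolation between the uniform coercivity $K\le-\delta_0$ and the gradient-negative form of $K$, which lets the favorable $-\|\nabla u\|^2$ absorb the Young residual while the coercivity gap $m-E(u_0)>0$ beats the leftover constant. Both the assumption $b<a$ (positivity of the $\widetilde E$ and gradient-negative coefficients) and $p>2+\tfrac{4-2a}{3}$ (existence of the scaling zero $\lambda_0<1$) enter essentially.
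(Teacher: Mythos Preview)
Your proof is correct and follows the same localized virial architecture as the paper: invariance of $\mathcal K^-$, a uniform negative bound on $K(u(t))$ via $L^2$-scaling onto the Pohozaev manifold, rewriting $K$ in gradient-negative form through energy conservation, and controlling the radial tail errors by the Strauss inequality plus Young. The differences are purely algebraic. For the coercivity step you use monotonicity of $\widetilde E=E-\tfrac1{6-2b}K$ along the scaling to obtain $K(u(t))\le(6-2b)(E(u_0)-m)$, whereas the paper invokes the concavity of $\lambda\mapsto E(u_\lambda)$ from Lemma \ref{maximum} to obtain the slightly sharper $K(u(t))\le E(u_0)-m$. For the closing step you interpolate between this coercivity bound and the gradient-negative form with a weight $\theta$, which lets $-8(1-\theta)(2-b)\|\nabla u\|^2$ absorb the Young residual directly; the paper instead derives from Lemma \ref{clem1} and the critical Sobolev inequality \eqref{min2} a uniform lower bound $\|\nabla u(t)\|^2\ge\tfrac{2(3-b)}{2-b}m$, inserts it into the gradient-negative form, and verifies the resulting constant is strictly negative. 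Your interpolation is arguably cleaner in that it avoids this extra application of the Sobolev inequality; the paper's route yields an explicit decay constant $-8(3-b)\delta_0 m$. Both close under exactly the same hypotheses $b<a$ and $p>2+\tfrac{4-2a}{3}$, for the same structural reasons you identify.
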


{ To establish Theorem \ref{blowup}, we first need to establish variational characterizations of ground state energy level $m$ given by \eqref{min'}, see Lemma \ref{clem2}, where the assumption $b<a$ is imposed. Then, by analyzing the evolution of localized virial quantity defined by
$$
V_R(t):=\int_{\R^3} \psi_R(x) |u(t, x)|^2 \,dx,
$$
where $R>0$ and $\psi_R : \R^3 \to \R$ is a proper cut-off function, see Lemma \ref{mrwz1}, we attain the desirable conclusion. This completes the proof.}

\begin{rem}
 {\rm We anticipate { that blow-up of solutions occurs as well} for non-radial data subject to specific constraints on $p$. This can be accomplished { with minor changes by following closely the investigation conducted} in \cite{GMS}.}   
\end{rem} 

\begin{rem}
{\rm The upper bound on { blow-up rate} of solutions can be directly derived from \cite[Theorem 1.6]{GMS} within { the} radial context.}
\end{rem}

The outline of the article is as follows. In Section \ref{S2}, we commence by introducing key notations, revisiting standard identities, and presenting a series of valuable results. Section \ref{S3} is dedicated to exploring the existence or nonexistence of solutions to \eqref{equ1}, where we provide the proof of Theorem \ref{existence}. In Section \ref{S4}, we  examine the local { and global} existence of \eqref{equ} within the energy space $C_T(H^1)$. In Section \ref{S5}, we address the energy scattering of { solutions to} \eqref{equ} { and offer} the proof of Theorem \ref{scattering}. Section \ref{S6} will conclude our discussion by examining the blow-up of solutions to the Cauchy problem for \eqref{equ} and presenting the proofs of Theorems \ref{nonexistence} and \ref{blowup}.

Throughout the { paper}, the symbol $C$ will represent various positive constants that are inessential to the analysis and may vary from line to line. We use the notation  $X\lesssim Y$ to indicate the estimate  $X\leq CY$ for certain constant $C>0$.

For convenience, we shall use the notation $\|\cdot\|$ to denote the $L^2$ norm throughout the paper.

\section{Preliminaries} \label{S2}

In this section, we will introduce some tools and auxiliary results, { which are crucial to establish} our main findings. We will start by presenting the well-known Gagliardo-Nirenberg inequality from \cite{Far, Genoud2012}.

\begin{lem} \label{gn}
Let  $0<\kappa<2$ and $2<r \leq 6-2\kappa$. Then there exists $C>0$ such that, for any $u \in H^1(\R^3)$,
\begin{align} \label{GN}
\int_{\R^3} |x|^{-\kappa} |u|^{r} \, dx \leq C\|\nabla u\|^{\frac{3}{2}r + \kappa-3}\, \|u\|^{3-\kappa -\frac{r}{2}}.
\end{align}
\end{lem}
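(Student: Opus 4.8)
The plan is to derive \eqref{GN} from a single application of the generalized Hölder inequality, interpolating between three classical estimates on $\R^3$: the Sobolev embedding $\|u\|_{L^6}\lesssim\|\nabla u\|$, the trivial identity $\nint|u|^2\,dx=\|u\|^2$, and Hardy's inequality $\nint|x|^{-2}|u|^2\,dx\lesssim\|\nabla u\|^2$. The point is to write the integrand $|x|^{-\kappa}|u|^r$ as a product of powers of the three integrands $|u|^6$, $|u|^2$ and $|x|^{-2}|u|^2$ in such a way that both the weight $|x|^{-\kappa}$ and the power of $|u|$ are reproduced exactly.

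Concretely, I would set
\[
a=\frac{r-2}{4},\qquad b=\frac{6-2\kappa-r}{4},\qquad c=\frac{\kappa}{2},
\]
and begin by checking admissibility: the hypotheses $2<r\le 6-2\kappa$ and $0<\kappa<2$ force $a,b,c\ge 0$, while a one-line computation gives $a+b+c=1$, so $(1/a,1/b,1/c)$ is a conjugate triple of Hölder exponents (each lies in $[1,\infty]$). Moreover $2c=\kappa$ and $6a+2b+2c=r$, whence $(|u|^6)^a(|u|^2)^b(|x|^{-2}|u|^2)^c=|x|^{-\kappa}|u|^r$. The generalized Hölder inequality then yields
\[
\nint |x|^{-\kappa}|u|^{r}\,dx\;\le\;\Big(\nint |u|^{6}\,dx\Big)^{a}\Big(\nint |u|^{2}\,dx\Big)^{b}\Big(\nint |x|^{-2}|u|^{2}\,dx\Big)^{c}.
\]

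Inserting the Sobolev, $L^2$ and Hardy bounds into the three factors and collecting exponents, the power of $\|\nabla u\|$ becomes $6a+2c=\tfrac{3}{2}r+\kappa-3$ and the power of $\|u\|$ becomes $2b=3-\kappa-\tfrac{r}{2}$, which is precisely \eqref{GN}. As a consistency check these exponents are in fact forced: both sides of \eqref{GN} are invariant under the scalings $u\mapsto\mu u$ and $u\mapsto u(\lambda\,\cdot)$, and matching the two resulting scaling laws determines the two exponents uniquely.

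The only genuine work is the verification that the weights $a,b,c$ are admissible throughout the full parameter range, together with the handling of the degenerate endpoints $r=2$ and $r=6-2\kappa$, where one weight vanishes and the corresponding factor must simply be dropped rather than treated as a Hölder term; this is where the constraints $0<\kappa<2$ (so that Hardy supplies the correct weight $|x|^{-2}$ and $c<1$) and $r\le 6-2\kappa$ (so that $b\ge0$) are used. I expect no serious obstacle beyond this bookkeeping. Alternatively, one can replace the Sobolev and Hardy inputs by the single Caffarelli--Kohn--Nirenberg (Hardy--Sobolev) endpoint inequality $\nint|x|^{-\sigma}|u|^{6-2\sigma}\,dx\lesssim\|\nabla u\|^{6-2\sigma}$ for $0<\sigma<2$ and interpolate against $\|u\|$ through an ordinary Hölder split, the scaling heuristic again pinning down the exponents.
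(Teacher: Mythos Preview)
Your argument is correct. The decomposition $|x|^{-\kappa}|u|^{r}=(|u|^{6})^{a}(|u|^{2})^{b}(|x|^{-2}|u|^{2})^{c}$ with $a=\tfrac{r-2}{4}$, $b=\tfrac{6-2\kappa-r}{4}$, $c=\tfrac{\kappa}{2}$ does satisfy $a+b+c=1$, $6a+2b+2c=r$, $2c=\kappa$, and all three weights lie in $[0,1]$ under the stated hypotheses; the generalized H\"older inequality together with the Sobolev embedding $\dot H^{1}(\R^{3})\hookrightarrow L^{6}(\R^{3})$ and the classical Hardy inequality $\int_{\R^{3}}|x|^{-2}|u|^{2}\,dx\le 4\|\nabla u\|^{2}$ then gives exactly \eqref{GN} with the claimed exponents. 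Your endpoint remarks (dropping the corresponding factor when $b=0$ at $r=6-2\kappa$) are the right way to handle the degenerate cases.

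By way of comparison, the paper does not prove this lemma at all: it is quoted from \cite{Far, Genoud2012} as a known inhomogeneous Gagliardo--Nirenberg inequality. Your proof is therefore more informative than what the paper offers, and it has the advantage of being entirely elementary --- it avoids any rearrangement or variational argument and reduces everything to three standard inequalities on $\R^{3}$. The alternative you sketch at the end (interpolating the single Hardy--Sobolev endpoint $\int|x|^{-\kappa}|u|^{6-2\kappa}\,dx\lesssim\|\nabla u\|^{6-2\kappa}$ against $\|u\|^{2}$ via ordinary H\"older) is in fact the form closest to how Lemma~\ref{sem} is used elsewhere in the paper, and would also work with slightly less bookkeeping.
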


We recall the Caffarelli-Kohn-Nirenberg weighted interpolation inequality { from} \cite{sgw,csl}.

\begin{lem}\label{ckn}
{ Let $1 \leq p, q<\infty$, $a \leq b$ and $b>\frac 3q, a>\frac 3{p}$. Assume that 
$$
b-a-1=3\left(\frac1p-\frac1q \right).
$$ 
Then there exists $C>0$ such that, for any $u \in C^{\infty}_0(\R^3)$,
$$
\||\cdot |^{-b} u\|_{L^q(\R^3)}\leq C\||\cdot|^{-a}\nabla  u\|_{L^p(\R^3)}.
$$
}
\end{lem}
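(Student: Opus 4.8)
The plan is to prove the inequality directly for $u\in C_0^\infty(\R^3)$ by reducing the three-dimensional weighted estimate to a one-dimensional Hardy inequality along rays emanating from the origin. The hypothesis $b-a-1=3(1/p-1/q)$ is precisely the homogeneity condition that matches the scaling of the two sides under dilations $u\mapsto u(\lambda\,\cdot)$; it will guarantee that the one-dimensional inequality produced below is scale invariant and hence holds with a finite constant. Since $u$ has compact support, for every direction $\theta\in\mathbb{S}^2$ and every $r>0$ the fundamental theorem of calculus along the ray $s\mapsto s\theta$ gives
$$
u(r\theta)=-\int_r^\infty \frac{d}{ds}u(s\theta)\,ds=-\int_r^\infty \theta\cdot\nabla u(s\theta)\,ds,
$$
whence the pointwise bound $|u(r\theta)|\le \int_r^\infty |\nabla u(s\theta)|\,ds$. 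Passing to polar coordinates $x=r\theta$ rewrites the two weighted norms as
$$
\||\cdot|^{-b}u\|_{L^q}^q=\int_{\mathbb{S}^2}\!\!\int_0^\infty r^{2-bq}|u(r\theta)|^q\,dr\,d\theta,\qquad \||\cdot|^{-a}\nabla u\|_{L^p}^p=\int_{\mathbb{S}^2}\!\!\int_0^\infty r^{2-ap}|\nabla u(r\theta)|^p\,dr\,d\theta.
$$

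For each fixed direction $\theta$, I would invoke the classical one-dimensional weighted Hardy inequality of tail type,
$$
\left(\int_0^\infty r^{2-bq}\Big(\int_r^\infty h(s)\,ds\Big)^q dr\right)^{1/q}\le C\left(\int_0^\infty r^{2-ap}h(s)^p\,ds\right)^{1/p},
$$
applied to $h(s)=|\nabla u(s\theta)|$. The conditions $a>3/p$ and $b>3/q$ are exactly what force the weight exponents $2-ap$ and $2-bq$ strictly below $-1$, which is the integrability requirement near the origin for this version of Hardy's inequality (with the integral taken from $r$ to $\infty$), while the balance relation supplies the homogeneity needed for the associated Muckenhoupt-type condition, making $C$ finite and independent of $\theta$. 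Combined with the pointwise bound above, this yields, for almost every $\theta$,
$$
\left(\int_0^\infty r^{2-bq}|u(r\theta)|^q\,dr\right)^{1/q}\le C\left(\int_0^\infty r^{2-ap}|\nabla u(r\theta)|^p\,dr\right)^{1/p}.
$$

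It then remains to integrate these radial estimates over $\mathbb{S}^2$, and this angular step is where I expect the main difficulty. When $q\le p$ it is immediate: raising the last display to the power $q$, integrating over $\mathbb{S}^2$, and using the finite-measure embedding $L^p(\mathbb{S}^2)\hookrightarrow L^q(\mathbb{S}^2)$ produces exactly $\||\cdot|^{-b}u\|_{L^q}\le C\||\cdot|^{-a}\nabla u\|_{L^p}$. When $q>p$, however, a pointwise-in-$\theta$ radial bound followed by angular integration runs in the wrong direction, and one must instead apply Minkowski's integral inequality directly to the ray representation, estimating the mixed $L^q_\theta L^q_r$ norm of $\int_r^\infty|\nabla u(s\theta)|\,ds$ by the $s$-integral of the corresponding norms before invoking the radial Hardy bound. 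Carrying out this mixed-norm interchange cleanly, and checking that the resulting constant still depends only on the admissible parameters, is the technical heart of the argument. Since $C_0^\infty(\R^3)$ is the class in the statement, no further density step is needed.

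As an alternative route I would decompose $\R^3$ into dyadic annuli $\{2^j\le|x|<2^{j+1}\}$, freeze the weights $|\cdot|^{-a}$ and $|\cdot|^{-b}$ on each annulus, apply a rescaled form of the unweighted Sobolev embedding $W^{1,p}\hookrightarrow L^q$ on a fixed reference annulus, and sum in $j$; here the balance relation is precisely what turns the resulting sum into a convergent geometric (or $\ell^p$–$\ell^q$) series, and the main nuisance becomes controlling the lower-order term generated by Sobolev's inequality on a bounded domain.
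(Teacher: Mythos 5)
Your central step is false as stated, and the failure is in the direction of the Muckenhoupt condition. For the tail Hardy operator $(Th)(r)=\int_r^\infty h(s)\,ds$, the weighted inequality $\bigl(\int_0^\infty r^{2-bq}(Th)^q\,dr\bigr)^{1/q}\le C\bigl(\int_0^\infty r^{2-ap}h^p\,dr\bigr)^{1/p}$ holds (for $p\le q$) if and only if $\sup_{t>0}\bigl(\int_0^t r^{2-bq}\,dr\bigr)^{1/q}\bigl(\int_t^\infty r^{(2-ap)(1-p')}\,dr\bigr)^{1/p'}<\infty$; in particular the left-hand weight must be integrable \emph{at the origin}, i.e.\ $2-bq>-1$, i.e.\ $b<3/q$ --- the exact opposite of the hypothesis $b>3/q$ that you invoke to justify it. Non-integrability of $r^{2-bq}$ at $0$ destroys this inequality rather than enabling it: with $h=\mathbf{1}_{[1,2]}$ one has $Th\ge 1$ on $(0,1]$, so the left side is at least $\bigl(\int_0^1 r^{2-bq}\,dr\bigr)^{1/q}=\infty$ while the right side is finite. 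The same example transplanted to $\R^3$ shows that the lemma as printed cannot be proved because it is false: take $p=q=2$, $a=5/2$, $b=7/2$ (all printed hypotheses hold) and $u\in C_0^\infty(\R^3)$ with $u\equiv1$ on the unit ball; then $\||\cdot|^{-b}u\|_{L^2}=\infty$ while $\||\cdot|^{-a}\nabla u\|_{L^2}<\infty$. Relatedly, your opening claim that $b-a-1=3(1/p-1/q)$ is the dilation-homogeneity condition is also wrong: under $u\mapsto u(\lambda\,\cdot)$ the two sides scale as $\lambda^{b-3/q}$ and $\lambda^{a+1-3/p}$, so scale invariance forces $b-a-1=3(1/q-1/p)$. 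Had you carried out the scaling check you appealed to, you would have seen that the printed hypotheses are typographical errors: the correct statement (the Stein--Weiss/Caffarelli--Kohn--Nirenberg inequality of the cited references, and the only instance the paper ever uses, namely Hardy's inequality $\||x|^{-1}v\|_{L^r}\lesssim\|\nabla v\|_{L^r}$ with $p=q=r<3$, $a=0$, $b=1$) requires $p\le q$, $a\le b$, $b<3/q$, and $b-a-1=3(1/q-1/p)$.

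A second point of comparison: the paper offers no proof at all --- the lemma is quoted from Stein--Weiss and Lin --- so the real question is whether your argument proves the \emph{corrected} statement, and it does so only when $p=q$. The obstruction you flag for $q>p$ is fatal to any ray-by-ray scheme, not merely technical: the pointwise majorant $U(r,\theta)=\int_r^\infty|\nabla u(s\theta)|\,ds$ treats $|\nabla u(s\theta)|$ as an arbitrary nonnegative $h(s,\theta)$, and for such $h$ the inequality with $q>p$ is simply false --- concentrate $h$ in an angular sector of measure $\epsilon$ and the two sides scale as $\epsilon^{1/q}$ and $\epsilon^{1/p}$, so their ratio blows up as $\epsilon\to0$. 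In particular the Minkowski interchange you propose cannot close: it leaves you needing $\||\nabla u(s\,\cdot)|\|_{L^q(\mathbb{S}^2)}\lesssim\||\nabla u(s\,\cdot)|\|_{L^p(\mathbb{S}^2)}$, which fails for $q>p$. The gain of integrability from $p$ to $q$ is a genuinely three-dimensional smoothing effect, which is why the cited references argue instead from the representation $|u(x)|\lesssim\int_{\R^3}|x-y|^{-2}|\nabla u(y)|\,dy$ combined with the Stein--Weiss weighted bound for the Riesz potential $I_1$. Your dyadic-annuli alternative is much closer to a viable proof of the corrected statement (it is essentially the CKN argument), but as sketched it also relies on $q\ge p$ for the $\ell^p\hookrightarrow\ell^q$ summation of the annular pieces and still needs a genuine argument, not just bookkeeping, to absorb the lower-order term of the Sobolev--Poincar\'e inequality on each annulus.
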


We also recall the Strauss inequality for radial functions { from}\cite{BL, Strauss}.

\begin{lem} \label{rlem}
There exists $C>0$ such that, for any $u \in H_{rad}^1(\R^3)$,
\begin{align} \label{rineq}
{ |u(x)| \leq C |x|^{-1}  \|u\|^{\frac 12}\|\nabla u\|^{\frac 12}, \quad |x| \geq R},
\end{align}
{ where $R>0$}.
\end{lem}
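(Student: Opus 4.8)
The plan is to reduce the estimate to the one-dimensional radial variable and then combine the fundamental theorem of calculus with the Cauchy--Schwarz inequality. By density of smooth compactly supported radial functions in $H^1_{rad}(\R^3)$, it suffices to establish \eqref{rineq} for $u \in C^\infty_0(\R^3)$ radial and then pass to the limit, since both sides of \eqref{rineq} are continuous with respect to the $H^1$-topology (after extracting a pointwise almost-everywhere convergent subsequence). For such $u$ I would write $u(x) = v(r)$ with $r = |x|$, so that $\|u\|^2 = 4\pi \int_0^\infty |v(s)|^2 s^2 \,ds$ and $\|\nabla u\|^2 = 4\pi \int_0^\infty |v'(s)|^2 s^2 \,ds$.

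The key step is to start from the identity
$$
r^2 |v(r)|^2 = -\int_r^\infty \frac{d}{ds}\left(s^2 |v(s)|^2\right) ds,
$$
which is valid because $v$ has compact support, so that the boundary term at infinity vanishes. Expanding the derivative as $\frac{d}{ds}\left(s^2 |v|^2\right) = 2s|v|^2 + 2s^2 \operatorname{Re}(\bar v\, v')$ and discarding the term $-\int_r^\infty 2s|v|^2 \,ds$, which carries a favorable sign, I obtain
$$
r^2 |v(r)|^2 \leq 2\int_r^\infty s^2 |v(s)|\,|v'(s)|\, ds.
$$

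Applying Cauchy--Schwarz to the right-hand side then yields
$$
r^2 |v(r)|^2 \leq 2\left(\int_r^\infty s^2 |v|^2 \,ds\right)^{1/2}\left(\int_r^\infty s^2 |v'|^2 \,ds\right)^{1/2} \leq \frac{1}{2\pi}\,\|u\|\,\|\nabla u\|,
$$
where the final bound comes from dominating the two truncated integrals by $\frac{1}{4\pi}\|u\|^2$ and $\frac{1}{4\pi}\|\nabla u\|^2$, respectively. Taking square roots gives \eqref{rineq} with the explicit constant $C = (2\pi)^{-1/2}$, in fact valid for every $r > 0$ and hence a fortiori for $|x| \geq R$.

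I do not anticipate any serious difficulty in this argument, as it is essentially the classical Strauss radial Sobolev estimate specialized to $\R^3$. The only point requiring genuine care is the justification of the vanishing boundary term together with the density passage; this is precisely why I would first reduce to compactly supported radial functions, for which the boundary term is identically zero, and only afterwards invoke density to recover the inequality for all of $H^1_{rad}(\R^3)$.
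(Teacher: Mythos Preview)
Your argument is correct and is precisely the classical Strauss radial estimate; the paper itself does not supply a proof but simply cites \cite{BL, Strauss}, and what you have written is exactly the standard proof found in those references. There is nothing to add.
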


Let us also present the well-known { Sobolev} inequality in $\dot{H}^1(\R^3)$, which can be regarded as a special case for $\kappa=b$ and $r=6-2b$ in Lemma \ref{gn}.

\begin{lem} \label{sem}
Let $0<b<2$. { Then} there exists $S_b>0$ such that, for any $u \in \dot{H}^1(\R^3)$,
\begin{align} 
S_b\left(\int_{\R^3} |x|^{-b}|u|^{6-2b} \,dx \right)^{\frac {1}{6-2b}} \leq \|\nabla u\|,
\end{align}
where the equality holds if and only if $u=Q$ and $Q$ is given by \eqref{defq}.
\end{lem}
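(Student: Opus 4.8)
The plan is to separate the inequality from the identification of its extremals. The inequality itself is the borderline case of the weighted Gagliardo--Nirenberg estimate of Lemma \ref{gn}. Taking $\kappa=b$ and $r=6-2b$ in \eqref{GN}, the exponent of the zeroth-order factor becomes $3-b-\tfrac{6-2b}{2}=0$, so $\|u\|$ drops out entirely and one is left with $\int_{\R^3}|x|^{-b}|u|^{6-2b}\,dx\le C\,\|\nabla u\|^{6-2b}$ for $u\in H^1(\R^3)$, which extends to $\dot H^1(\R^3)$ by density. Raising to the power $\tfrac{1}{6-2b}$ and setting $S_b:=C^{-1/(6-2b)}$ gives the claimed bound, with the sharp value $S_b=1/C_*$ the best constant in the minimization problem \eqref{min2}. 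Thus the only substantial content is the equality statement, i.e.\ the identification of the extremals of \eqref{min2}.

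For the equality case I would work with the Rayleigh quotient in \eqref{min2}. A homogeneity count shows it is invariant under the two-parameter family $u\mapsto c\,u(\lambda\,\cdot)$ with $c\neq 0$, $\lambda>0$: indeed $\|\nabla u(\lambda\,\cdot)\|$ scales like $\lambda^{-1/2}$ while $\big(\int|x|^{-b}|u(\lambda\,\cdot)|^{6-2b}\big)^{1/(6-2b)}$ scales like $\lambda^{(b-3)/(6-2b)}=\lambda^{-1/2}$, so the quotient is unchanged; and both numerator and denominator are homogeneous of degree one in $u$. Since $|x|^{-b}$ is radially symmetric and nonincreasing, the P\'olya--Szeg\H{o} inequality together with the Hardy--Littlewood rearrangement inequality shows that passing to the Schwarz symmetrization does not increase $\|\nabla u\|$ and does not decrease $\int_{\R^3}|x|^{-b}|u|^{6-2b}\,dx$. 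Consequently a minimizing sequence may be taken nonnegative, radial, and nonincreasing.

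Next I would establish existence of a minimizer. After fixing the normalization $\int_{\R^3}|x|^{-b}|u|^{6-2b}\,dx=1$ and using the residual dilation freedom to pin the concentration scale, I would run the Lions concentration--compactness principle on the resulting bounded sequence in $\dot H^1_{rad}(\R^3)$: vanishing is excluded by the normalization, and dichotomy is excluded by the strict subadditivity of $\rho\mapsto I(\rho)=\rho^{1/(3-b)}I(1)$, which follows from $\tfrac{1}{3-b}<1$. This yields, modulo dilations, a nonnegative radial minimizer $\phi$, which satisfies the Euler--Lagrange equation $-\Delta\phi=\Lambda\,|x|^{-b}\phi^{5-2b}$ for some $\Lambda>0$; after the admissible rescaling $\phi\mapsto c\,\phi(\lambda\,\cdot)$ this becomes exactly \eqref{defq}, and a direct substitution verifies that the explicit $Q$ of \eqref{defq} solves it.

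The main obstacle is twofold. First, the quotient is dilation-invariant, so minimizing sequences can concentrate at the origin or spread to infinity; the noncompact scaling group must be quotiented out (translation invariance is already broken by the weight, which is what makes the origin the only concentration point and keeps the argument manageable). Second, one must prove that the positive radial decaying solution of \eqref{defq} is unique up to dilation. For this I would pass to the Emden--Fowler variable $t=\log|x|$, $w(t)=|x|\,\phi(|x|)$, reducing the radial ODE to an autonomous second-order equation whose finite-energy positive solutions form a single orbit under time translation; translating back identifies every admissible radial extremal with a dilate of $Q$, and together with the rearrangement step this shows that the extremals of \eqref{min2} are exactly the functions $c\,Q(\lambda\,\cdot)$, so that in the normalized form of the lemma equality holds precisely for $u=Q$. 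Alternatively, since \eqref{defq} is the critical Hardy--Sobolev equation in dimension three with singularity exponent $b$, one may simply invoke the classical classification of its extremals.
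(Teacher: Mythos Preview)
The paper does not actually supply a proof of this lemma: it is presented as the ``well-known Sobolev inequality in $\dot H^1(\R^3)$'' and the only justification offered is the sentence immediately preceding the statement, namely that the inequality ``can be regarded as a special case for $\kappa=b$ and $r=6-2b$ in Lemma~\ref{gn}.'' Your derivation of the inequality is precisely this observation, so on that point you and the paper coincide.

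For the equality case the paper says nothing at all, simply asserting that extremals are given by the explicit $Q$ of \eqref{defq}; this is the classical Hardy--Sobolev extremal, and your sketch (symmetrization, concentration--compactness modulo the dilation group, Euler--Lagrange, and ODE uniqueness via the Emden--Fowler change of variables) is a correct and standard route to that fact. In other words, you have supplied an argument where the paper supplies none, and your final remark that one may alternatively ``simply invoke the classical classification'' of Hardy--Sobolev extremals is exactly the shortcut the paper is implicitly taking. One small caveat: as you yourself noticed, the quotient \eqref{min2} is invariant under $u\mapsto c\,u(\lambda\cdot)$, so the literal ``if and only if $u=Q$'' in the lemma should be read up to this two-parameter symmetry; the paper's phrasing is loose here, not your argument.
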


\begin{lem} \label{ph}
Let $u \in H^1(\R^3)$ be a nontrivial solution to \eqref{equ1}. Then $K(u)=0$, i.e
$$
\int_{\R^3} |\nabla u|^2 \,dx+\frac{3(p-2)+2a}{2p}\int_{\R^3}|x|^{-a}|u|^{p}\, dx=\int_{\R^3}|x|^{-b}|u|^{6-2b}\, dx.
$$
\end{lem}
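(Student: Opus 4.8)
The plan is to derive two integral identities satisfied by every nontrivial $H^1$ solution of \eqref{equ1} --- the Nehari (energy) identity and the Pohozaev (dilation) identity --- and then to take the linear combination that eliminates the term $\omega\|u\|^2$, which should leave exactly $K(u)=0$. Note that the parameter $\omega$ does not appear in $K$, which is the structural reason why such a combination must exist; the integrals in $K$ are finite for $u\in H^1(\R^3)$ by the Gagliardo--Nirenberg inequality (Lemma \ref{gn}).

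First I would test \eqref{equ1} against $\bar u$: multiplying by $\bar u$, integrating over $\R^3$ and taking real parts, an integration by parts in the Laplacian term gives the Nehari identity
\[
\|\nabla u\|^2 + \omega\|u\|^2 = \int_{\R^3}|x|^{-b}|u|^{6-2b}\,dx - \int_{\R^3}|x|^{-a}|u|^p\,dx.
\]
Next I would test \eqref{equ1} against the dilation field $x\cdot\nabla\bar u$. Taking real parts and using the elementary identities $\mathrm{Re}\int(-\Delta u)(x\cdot\nabla\bar u) = -\tfrac12\|\nabla u\|^2$ and $\mathrm{Re}\int u\,(x\cdot\nabla\bar u) = -\tfrac32\|u\|^2$ in $\R^3$, together with
\[
\mathrm{Re}\int_{\R^3}|x|^{-c}|u|^{r-2}u\,(x\cdot\nabla\bar u)\,dx = \frac1r\int_{\R^3}|x|^{-c}\,x\cdot\nabla|u|^r\,dx = -\frac{3-c}{r}\int_{\R^3}|x|^{-c}|u|^r\,dx,
\]
which follows from $\mathrm{div}(|x|^{-c}x)=(3-c)|x|^{-c}$ applied with $(c,r)=(b,6-2b)$ and $(c,r)=(a,p)$, I obtain the Pohozaev identity
\[
\|\nabla u\|^2 + 3\omega\|u\|^2 = \int_{\R^3}|x|^{-b}|u|^{6-2b}\,dx - \frac{2(3-a)}{p}\int_{\R^3}|x|^{-a}|u|^p\,dx.
\]
Finally, solving the Nehari identity for $\omega\|u\|^2$ and substituting into the Pohozaev identity cancels the mass term; using $3-\tfrac{2(3-a)}{p}=\tfrac{3(p-2)+2a}{p}$, the result simplifies precisely to $K(u)=0$.

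I expect the Pohozaev step to be the main obstacle, since multiplying by $x\cdot\nabla\bar u$ and integrating by parts is only formal for a mere $H^1$ solution in the presence of the singular weights $|x|^{-a},|x|^{-b}$. To make it rigorous I would first upgrade the regularity and decay of $u$ by elliptic regularity away from the origin, then run a cutoff argument with $\chi(\cdot/\varepsilon)$ near the origin and $\chi(\cdot/R)$ at infinity, establishing the identity with remainder terms and showing these vanish as $\varepsilon\to0$ and $R\to\infty$ by means of the Gagliardo--Nirenberg inequality (Lemma \ref{gn}) and the Caffarelli--Kohn--Nirenberg inequality (Lemma \ref{ckn}), which guarantee the required integrability of the weighted nonlinear terms. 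Alternatively, the Pohozaev identity can be produced without pointwise regularity by differentiating the action along the dilation $u_\lambda:=u(\cdot/\lambda)$: the $\lambda$-dependence of each term of $S_\omega(u_\lambda)$ is explicit by scaling, and $\frac{d}{d\lambda}S_\omega(u_\lambda)\big|_{\lambda=1}=0$ because $u$ is a critical point of $S_\omega$, which reproduces the same identity.
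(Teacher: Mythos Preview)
Your proposal is correct: the combination of the Nehari identity and the Pohozaev identity, together with the observation $\frac{3-b}{6-2b}=\frac12$ and $3-\frac{2(3-a)}{p}=\frac{3(p-2)+2a}{p}$, gives exactly $K(u)=0$, and your discussion of the cutoff justification (or the equivalent variational derivation via $\frac{d}{d\lambda}S_\omega(u_\lambda)\big|_{\lambda=1}=0$) is the standard way to make the Pohozaev step rigorous. The paper itself omits the proof and simply refers to \cite[Lemma 2.4]{GMS}, so your argument is essentially the one intended.
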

\begin{proof}
To establish this result, one can adopt a similar approach to the proof outlined in \cite[Lemma 2.4]{GMS}. For brevity, we will skip the proof here.
\end{proof}

\begin{lem} \label{maximum}
Let $0<b<a<2$ and $2+\frac{2(2-a)}{3}<p < { 6-2a}$. Then{,  for any $u \in {H}^1(\R^3) \backslash \{0\}$}, there exists a unique $\lambda_u>0$ such that { $K(u_{\lambda_u})=0$} and 
\begin{align} \label{max1-1}
\max_{\lambda>0} E(u_\lambda)=E(u_{\lambda_u}),
\end{align}
{ where $u_{\lambda}:=\lambda^{\frac 32}u(\lambda \cdot)$ for $\lambda>0$.}
Moreover, if $K(u) < 0$, then $0<\lambda_u < 1$. Additionally, the function $\lambda \mapsto E(u_{\lambda})$ is concave on $[\lambda_u, +\infty)$.
\end{lem}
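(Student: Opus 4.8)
The plan is to reduce everything to the one‑variable analysis of $f(\lambda) := E(u_\lambda)$. First I would record how the scaling $u_\lambda = \lambda^{3/2}u(\lambda\,\cdot)$ acts on each term of $E$. Since this scaling preserves the $L^2$-norm, a change of variables gives
$$
f(\lambda) = \frac{\lambda^2}{2}\|\nabla u\|^2 + \frac{\lambda^{\theta}}{p}\int_{\R^3}|x|^{-a}|u|^p\,dx - \frac{\lambda^{6-2b}}{6-2b}\int_{\R^3}|x|^{-b}|u|^{6-2b}\,dx,\qquad \theta := \frac{3(p-2)+2a}{2},
$$
and, comparing with the definition of $K$, one verifies the identity $K(u_\lambda)=\lambda f'(\lambda)$. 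Writing $A:=\|\nabla u\|^2>0$, $B:=\int_{\R^3}|x|^{-a}|u|^p\,dx>0$ and $D:=\int_{\R^3}|x|^{-b}|u|^{6-2b}\,dx>0$ (all strictly positive as $u\neq 0$), the entire statement reduces to a question about the profile $f(\lambda)=\tfrac{A}{2}\lambda^2+\tfrac{B}{p}\lambda^{\theta}-\tfrac{D}{6-2b}\lambda^{6-2b}$.

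The decisive structural input is the ordering of the three exponents. From $p>2+\frac{2(2-a)}{3}$ I obtain $\theta>2$, and from $p<6-2a$ together with $b<a$ I obtain $\theta<6-2a<6-2b$; hence $2<\theta<6-2b$. Thus $f$ carries a positive coefficient on its smallest power $\lambda^2$ and a negative coefficient on its largest power $\lambda^{6-2b}$, so $f(\lambda)\to 0^+$ as $\lambda\to0^+$ and $f(\lambda)\to-\infty$ as $\lambda\to+\infty$; in particular $f$ attains a positive maximum on $(0,\infty)$.

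For existence and uniqueness of the critical point I would factor $f'(\lambda)=\lambda\,g(\lambda)$ with
$$
g(\lambda) := A + \frac{\theta}{p}\,B\,\lambda^{\theta-2} - D\,\lambda^{4-2b},
$$
so that positive critical points of $f$ are exactly the zeros of $g$. Since $0<\theta-2<4-2b$, the derivative $g'(\lambda)=\lambda^{\theta-3}\bigl(\tfrac{\theta(\theta-2)}{p}B-(4-2b)D\,\lambda^{6-2b-\theta}\bigr)$ changes sign exactly once, so $g$ is strictly increasing and then strictly decreasing, with peak at some $\lambda_*$. As $g(0^+)=A>0$ and $g(+\infty)=-\infty$, the function $g$ has a unique zero $\lambda_u$, which must lie on the decreasing branch, i.e. $\lambda_u>\lambda_*$. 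Because $f'=\lambda g$ is then positive on $(0,\lambda_u)$ and negative on $(\lambda_u,\infty)$, this $\lambda_u$ is the unique maximizer of $f$, yielding simultaneously $K(u_{\lambda_u})=0$ and \eqref{max1-1}. The claim $0<\lambda_u<1$ when $K(u)<0$ is then immediate: since $K(u)=f'(1)$, the hypothesis forces $1$ into the region $\{f'<0\}=(\lambda_u,\infty)$.

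The main obstacle—and the step that uses the exponent ordering most delicately—is the concavity on $[\lambda_u,+\infty)$. Rather than estimating $f''$ term by term, I would exploit $f'=\lambda g$ to write $f''(\lambda)=g(\lambda)+\lambda g'(\lambda)$. On $[\lambda_u,\infty)$ both summands are nonpositive: since $\lambda_u>\lambda_*$ and $g$ is decreasing past $\lambda_*$, we have $g(\lambda)\leq g(\lambda_u)=0$, while $g'(\lambda)<0$ gives $\lambda g'(\lambda)<0$; hence $f''<0$ there. The only point to check with care is that $\lambda_u$ genuinely sits to the right of the peak $\lambda_*$, which holds because $g$ stays strictly positive on $(0,\lambda_*]$ (it increases from $g(0^+)=A>0$), so its unique zero is forced onto the decreasing branch. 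This completes the concavity and hence the proof.
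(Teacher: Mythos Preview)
Your proof is correct and follows essentially the same strategy as the paper: compute $f(\lambda)=E(u_\lambda)$, use the identity $K(u_\lambda)=\lambda f'(\lambda)$, and exploit the exponent ordering $2<\theta<6-2b$ to deduce a unique critical point that is the global maximum. The only cosmetic difference is in the concavity step: the paper analyzes $f''$ directly and locates its unique sign change $\tilde\lambda_u\le\lambda_u$, whereas you factor $f'=\lambda g$ and use $f''=g+\lambda g'$ with both summands nonpositive on $[\lambda_u,\infty)$; the two arguments are equivalent and yours is arguably tidier.
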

\begin{proof}
Observe that
\begin{align} \label{max1}
\begin{split}
\frac{d}{d\lambda} E(u_{\lambda})&=\lambda\int_{\R^3} |\nabla u|^2 \,dx+\frac{3(p-2)+2a}{2p}{\lambda}^{\frac 3 2 (p-2)+a-1}\int_{\R^3}|x|^{-a}|u|^{p} \, dx\\ 
&\quad-{\lambda}^{5-2b}\int_{\R^3}|x|^{-b}|u|^{6-2b} \, dx \\
&=\frac {1} {\lambda} K(u_{\lambda}).
\end{split}
\end{align}
In addition, we note that
\begin{align} \label{max2} \nonumber
\frac{d^2}{d{\lambda}^2} E(u_{\lambda})&=\int_{\R^3} |\nabla u|^2 \,dx+\frac{(3(p-2)+2a)((3(p-2)-2(1-a))}{4p}{\lambda}^{\frac{3}{2}(p-2)+a-2}\int_{\R^3}|x|^{-a}|u|^{p} \, dx \\
& \quad -(5-2b){\lambda}^{4-2b}\int_{\R^3}|x|^{-b}|u|^{6-2b} \, dx.
\end{align}
{ Since $0<b<a<2$ and $2+\frac{2(2-a)}{3}<p < 6-2a$, by \eqref{max1} and \eqref{max2}, then there exists a unique  $\lambda_u>0$ such that $K(u_{\lambda_u})=0$}. Moreover, { there holds that}
$$
\frac{d}{d\lambda} E(u_\lambda)>0\,\, \mbox{for} \,\, 0<\lambda<\lambda_u, \quad \frac{d}{d\lambda} E(u_\lambda)<0 \,\, \mbox{for} \,\, {\lambda >\lambda_u}.
$$ 
Hence {\eqref{max1} holds true}. Furthermore, { we are able to conclude} that $0 < \lambda_u < 1$ when $K(u) < 0$. Additionally, it is straightforward to observe the existence of a unique $0 <\widetilde{\lambda}_{u} \leq \lambda_u$ such that
$$
\frac{d^2}{d\lambda^2} E(u_{\lambda})\mid_{\lambda=\tilde{\lambda}_u}=0.
$$
{ Meanwhile, there holds that}
$$
\frac{d^2}{d\lambda^2} E(u_{\lambda})>0 \,\, \mbox{for} \,\, 0<\lambda<\widetilde{\lambda}_u, \quad \frac{d^2}{d\lambda^2} E(u_{\lambda})<0 \,\, \mbox{for} \,\, \lambda>\widetilde{\lambda}_u.
$$
This clearly implies that the function $\lambda \mapsto E(u_{\lambda})$ is concave on $[\lambda_u, +\infty)$. { This completes the proof}.
\end{proof}
Subsequently, our attention shifts towards scattering. { We} will assemble { some} requisite notations and tools to prepare the proof of Theorem \ref{scattering}. We begin with the so-called Strichrtz estimates.
\begin{defi}
Let ${s \in  [-1,1]}$. A pair of real numbers $(q, r)$ is called $s$-admissible if
$$
{ 2 \leq q, r \leq \infty,  \quad \frac{2}{q}+\frac{3}{r}=\frac{3}{2}-s}.
$$
When $s = 0$, we refer to the pair as admissible.
\end{defi}

Let $\Lambda_s$ denote the set of $s$-admissible pairs, meaning that
$$
\Lambda_s:=\Big\{(q, r) : (q, r) \,\, \mbox{is} \,\, s\mbox{-admissible} \Big\}.
$$ 
{ Let us} introduce the following Strichartz norm,
$$
\|u\|_{\Lambda^s(I)}:=\sup_{(q ,r) \in \Lambda_s} \|u\|_{L^q(I, L^r)}.
$$
Similarly, we define
$$
\Lambda_{-s}:=\left\{(q, r) : (q, r) \,\, \mbox{is} \,\, (-s)\mbox{-admissible} \right\}, \qquad \|u\|_{\Lambda^{-s'}(I)}:=\inf_{(q, r) \in \Lambda_{-s}}\|u\|_{L^{q'}(I, L^{r'})},
$$
where $(q', r')$ is the conjugate exponent pair of $(q ,r)$. For $s=0$, we will denote $\Lambda^s$ { by} $\Lambda$ and $\Lambda^{-s'}$ { by} $\Lambda'$. From now on, when referring to an interval $I \subset \mathbb{R}$, we { define} the spaces 
\begin{align} \label{S}
{ S(I):=L^{10}(I\times\R^3)}
\end{align}
$$
W(I):=L^{10}(I,L^{\frac{30}{13}}); 
$$
$$
\nabla W(I):=L^{10}(I,\dot W^{1,\frac{30}{13}}); 
$$
$$
W_{b}(I):=L^\frac{10(1+{b})}{1+3{b}}\left(I,L^{\frac{30(1+b)}{13+9b}}\right).
$$
{ Obviously, we find that} $W_0(I) = W(I)$. By the Sobolev embedding, we have that
\begin{align}
\nabla W(I)\hookrightarrow S(I).\label{inj}
\end{align}
Furthermore, { it is straightforward to check that} the following pairs are admissible,
\begin{align}
\left(10,\frac{30}{13}\right),\quad \left(\frac{10(1+{b})}{1+3{b}},\frac{30(1+b)}{13+9b}\right).
\end{align}

\begin{lem} \label{str} (\cite{Fo, KT})
{ Let $0 \leq s \leq 1$ and $I \subset \mathbb{R}$ be a time interval. Then the following assertions are true}.
\begin{itemize}
\item[$(\textnormal{i})$] $\displaystyle \|e^{\textnormal{i} t\Delta} f\|_{\Lambda^s(I)} \lesssim \|f\|_{\dot{H}^s}$;
\item[$(\textnormal{ii})$] $\displaystyle \left\|\int_0^t e^{\textnormal{i} (t-s)\Delta} g(\cdot, s) \,ds\right\|_{\Lambda^s(I)}+ \left\|\int_{\R} e^{\textnormal{i} (t-s)\Delta} g(\cdot, s) \,ds\right\|_{\Lambda^s(I)} \leq \|g\|_{\Lambda^{-s'}(I)}$;
\item[$(\textnormal{iii})$] $\displaystyle \left\|\int_I e^{\textnormal{i} (t-s)\Delta} g(\cdot, s) \,ds\right\|_{\Lambda^s(\R)} \leq \|g\|_{\Lambda^{-s'}(I)}$.
\end{itemize}
\end{lem}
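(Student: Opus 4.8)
The plan is to recover this standard Strichartz package (whence the references to \cite{Fo, KT}) from the dispersive decay of the free propagator together with an abstract duality argument, and then to pass from the admissible case $s=0$ to the general $s$-admissible case by Sobolev embedding. First I would record the two basic inputs in $\R^3$: the pointwise kernel bound giving the dispersive estimate $\|e^{\textnormal{i} t\Delta}f\|_{L^\infty}\lesssim |t|^{-3/2}\|f\|_{L^1}$, and the unitarity $\|e^{\textnormal{i} t\Delta}f\|=\|f\|$. Interpolating between these yields
$$
\|e^{\textnormal{i} t\Delta}f\|_{L^r}\lesssim |t|^{-3\left(\frac12-\frac1r\right)}\|f\|_{L^{r'}},\qquad 2\le r\le \infty,
$$
which is exactly the decay hypothesis (with dispersion exponent $\sigma=\tfrac32$) required to run the abstract scheme.

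For the case $s=0$ I would apply the $TT^*$ method: with $Tf:=e^{\textnormal{i} t\Delta}f$, the homogeneous estimate $(\textnormal{i})$ for $0$-admissible $(q,r)$ (those with $\frac2q+\frac3r=\frac32$) is equivalent by duality to the boundedness of $TT^*g=\int_{\R}e^{\textnormal{i}(t-\tau)\Delta}g(\cdot,\tau)\,d\tau$ from $\Lambda'(I)$ to $\Lambda(I)$. Away from the endpoint this follows by inserting the dispersive estimate above and applying the Hardy--Littlewood--Sobolev inequality in the time variable; the endpoint pair $(q,r)=(2,6)$ requires the finer Keel--Tao argument based on a dyadic decomposition of the kernel in $|t-\tau|$ and a bilinear interpolation. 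This simultaneously delivers the untruncated inhomogeneous estimate, i.e. the second term in $(\textnormal{ii})$.

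The retarded integrals $\int_0^t$ in $(\textnormal{ii})$ and $\int_I$ in $(\textnormal{iii})$ would then be obtained from the untruncated estimate by the Christ--Kiselev lemma for non-endpoint exponents; at the endpoint, where Christ--Kiselev fails, I would instead invoke the direct Keel--Tao treatment of the retarded operator, which handles all admissible $(q,r)$ including $(2,6)$. Finally, to upgrade to $0\le s\le 1$, I would use that $|\nabla|^s$ commutes with $e^{\textnormal{i} t\Delta}$: for an $s$-admissible pair $(q,r)$, define $\rho$ by $\frac1\rho=\frac1r+\frac{s}{3}$, so that Sobolev embedding gives $\|e^{\textnormal{i} t\Delta}f\|_{L^r}\lesssim \|e^{\textnormal{i} t\Delta}|\nabla|^s f\|_{L^{\rho}}$, while $\frac2q+\frac3\rho=\frac32$ makes $(q,\rho)$ genuinely $0$-admissible. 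Applying the $s=0$ bounds to $|\nabla|^s f$ (respectively to $|\nabla|^s g$ for the inhomogeneous statements) and using $\||\nabla|^s f\|=\|f\|_{\dot H^s}$ closes all three estimates.

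I expect the main obstacle to be the endpoint, in two guises: the homogeneous endpoint estimate at $(2,6)$, which genuinely needs the Keel--Tao bilinear/atomic machinery rather than Hardy--Littlewood--Sobolev, and the retarded endpoint inhomogeneous estimate, where the Christ--Kiselev shortcut is unavailable and one must argue directly. Everything else — the dispersive interpolation, the non-endpoint $TT^*$ step, and the Sobolev upgrade to $s$-admissible pairs — is routine, so in practice I would, as the authors do, simply cite \cite{Fo, KT} for the endpoint and present the Sobolev reduction explicitly.
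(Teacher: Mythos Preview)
The paper does not prove this lemma at all: it is stated with references to \cite{Fo, KT} and used as a black box. Your sketch is the standard derivation (dispersive estimate, $TT^*$, Hardy--Littlewood--Sobolev away from the endpoint, Keel--Tao at the endpoint, Christ--Kiselev for the retarded integrals, and Sobolev embedding to pass from $s=0$ to $s$-admissible pairs), which is precisely the content of the cited references, so your proposal is correct and fully consistent with the paper's treatment.
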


\begin{rem}
{\rm { In the discussion of scattering}}, the restriction $N=3$ is due to the use of the { following} estimate from \cite[Lemma 3.1]{tao},
\begin{equation}\label{fretao}
\|e^{\textnormal{i}\cdot\Delta}u\|_{L^4(\R,L^\infty(\R^3))}\lesssim \|\nabla u\|.
\end{equation}
\end{rem}

We are now in a position to address a variational characterization of the ground state energy level $m$. Define

\begin{align*}
I(u):=\frac{3(p-2)-2(2-a)}{6(p-2)+4a}\int_{\R^3} |\nabla u|^2 \,dx + \frac{2(6-2b)-3(p-2)-2a}{(3(p-2)+2a)(6-2b)} \int_{\R^3} |x|^{-b}|u|^{6-2b} \,dx.
\end{align*}

\begin{lem} \label{clem1}
{ Let $0<b<a<2$ and $2+\frac{4-2a}{3}<p < 6-2a$}. There holds that
\begin{align*}
m&=\inf \left\{ I(u) : u \in H^1(\R^3) \backslash \{0\}, K(u) \leq 0\right\} \\
&=\inf \left\{ I(u) : u \in H^1(\R^3) \backslash \{0\}, K(u) < 0\right\}.
\end{align*}
\end{lem}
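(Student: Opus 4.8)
The plan is to reduce the whole statement to the scaling analysis already carried out in Lemma \ref{maximum}, combined with one algebraic identity relating $I$, $E$ and $K$. First I would record the identity
$$
I(u) = E(u) - \frac{2}{3(p-2)+2a}\,K(u), \qquad u \in H^1(\R^3),
$$
which is verified simply by comparing the coefficients of $\int_{\R^3}|\nabla u|^2\,dx$, $\int_{\R^3}|x|^{-a}|u|^{p}\,dx$ and $\int_{\R^3}|x|^{-b}|u|^{6-2b}\,dx$ on both sides: the constant $\tfrac{2}{3(p-2)+2a}$ is precisely the multiplier that annihilates the $|x|^{-a}|u|^{p}$-term, and the remaining two coefficients then match those defining $I$. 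In particular $I(u) = E(u)$ whenever $K(u)=0$, so that $m = \inf_{P} E = \inf_{P} I$. Since $\{K=0\}\subset\{K\le 0\}$, this immediately gives $\inf\{I(u): u\neq 0,\ K(u)\le 0\}\le m$, which is one half of the first equality.

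Next I would check the signs of the two coefficients in $I$. The coefficient $\tfrac{3(p-2)-2(2-a)}{6(p-2)+4a}$ of $\|\nabla u\|^2$ is positive exactly because $p>2+\tfrac{4-2a}{3}$, while the coefficient $\tfrac{2(6-2b)-3(p-2)-2a}{(3(p-2)+2a)(6-2b)}$ of the critical term is positive because $3(p-2)+2a<12-4a\le 12-4b=2(6-2b)$, using $p<6-2a$ together with $b<a$. Hence $I(u)>0$ for every $u\neq 0$; moreover, recalling that $u_\lambda=\lambda^{3/2}u(\lambda\cdot)$ scales the gradient term as $\lambda^2$ and the critical term as $\lambda^{6-2b}$ with positive coefficients, the map $\lambda\mapsto I(u_\lambda)$ is strictly increasing on $(0,\infty)$ and continuous.

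The main step is the reverse inequality $\inf\{I: K\le 0\}\ge m$. Given $u\neq 0$ with $K(u)\le 0$, Lemma \ref{maximum} provides a unique $\lambda_u\in(0,1]$ with $K(u_{\lambda_u})=0$ (indeed $\lambda_u<1$ if $K(u)<0$ and $\lambda_u=1$ if $K(u)=0$). Then $u_{\lambda_u}\in P$, and by the monotonicity just established together with $\lambda_u\le 1$,
$$
I(u) = I(u_1) \ge I(u_{\lambda_u}) = E(u_{\lambda_u}) \ge \inf_P E = m .
$$
Taking the infimum over all such $u$ yields $\inf\{I: K\le 0\}\ge m$, and hence equality; combined with the first paragraph this establishes the first characterization.

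Finally, for the second characterization I would prove $\inf\{I: K<0\}=m$. The bound $\ge m$ is immediate since $\{K<0\}\subset\{K\le 0\}$. For the bound $\le m$, fix $\eps>0$ and pick $u\in P$ with $E(u)=I(u)<m+\eps$; since $\lambda_u=1$ here and $\tfrac{d}{d\lambda}E(u_\lambda)=\tfrac1\lambda K(u_\lambda)<0$ for $\lambda>1$, we have $K(u_\lambda)<0$ for $\lambda>1$, while continuity of $\lambda\mapsto I(u_\lambda)$ at $\lambda=1$ gives $I(u_\lambda)<m+2\eps$ for $\lambda$ close enough to $1$. Letting $\eps\to 0$ yields $\inf\{I:K<0\}\le m$, which finishes the proof. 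The only genuinely delicate points are the two sign verifications—where the hypotheses $p>2+\tfrac{4-2a}{3}$, $p<6-2a$ and $b<a$ are all consumed—and ensuring that the scaling direction $\lambda_u\le 1$ is compatible with the monotonicity of $I$; once these are secured, each inequality reduces to a one-line comparison via Lemma \ref{maximum}.
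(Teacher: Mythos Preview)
Your proof is correct and follows essentially the same approach as the paper: both rely on the identity $I=E-\tfrac{2}{3(p-2)+2a}K$, the scaling Lemma \ref{maximum} to project onto $P$, and the approximation $u\mapsto u_\lambda$ with $\lambda\to 1^+$ to pass from $\{K\le 0\}$ to $\{K<0\}$. Your version is slightly more explicit in verifying the positivity of the coefficients in $I$ and hence the monotonicity of $\lambda\mapsto I(u_\lambda)$, which the paper uses tacitly when writing $I(u_{\lambda_u})<I(u)$.
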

\begin{proof}
It is clear to see that
\begin{align} \label{c1-2}
\begin{split}
m&=\inf \left\{E(u) : u \in H^1(\R^3) \backslash \{0\}, K(u) = 0\right\} \\
& \geq \inf \left\{I(u) : u \in H^1(\R^3) \backslash \{0\}, K(u) \leq 0\right\}.
\end{split}
\end{align}
On the other hand, for any $K(u)<0$, from Lemma \ref{maximum}, there exists $0<\lambda_u<1$ such that $K(u_{\lambda_u})=0$. Therefore, we have that 
$$
E(u_{\lambda_u})=I(u_{\lambda_u})+\frac{2}{3(p-2)+2a}K(u_{\lambda_u})=I(u_{\lambda_u})<I(u).
$$ 
It then follows that
\begin{align} \label{c2}
\begin{split}
 m&=\inf \left\{E(u) : u \in H^1(\R^3) \backslash \{0\}, K(u) = 0\right\} \\
& \leq \inf \left\{I(u) : u \in H^1(\R^3) \backslash \{0\}, K(u) < 0\right\}.
\end{split}
\end{align}
Combining \eqref{c1-2} and \eqref{c2}, we then know that
\begin{align} \label{i1}
\begin{split}
&\inf \left\{ I(u) : u \in H^1(\R^3) \backslash \{0\}, K(u) \leq 0\right\} \\
& \leq m \leq \inf \left\{ I(u) : u \in H^1(\R^3) \backslash \{0\}, K(u) < 0\right\}.
\end{split}
\end{align}
If $K(u) \leq 0$, then $K(u_{\lambda}) <0$ for any $\lambda>1$ by Lemma \ref{maximum}. Observe that $I(u_{\lambda}) \to I(u)$ as $\lambda \to 1^+$. Therefore, we conclude that
\begin{align*}
&\inf \left\{ I(u) : u \in H^1(\R^3) \backslash \{0\}, K(u) < 0\right\} \\
& \leq \inf \left\{ I(u) : u \in H^1(\R^3) \backslash \{0\}, K(u) \leq 0\right\}.
\end{align*}
Coming back to \eqref{i1}, we then have the desired conclusion and the proof is completed.
\end{proof}

In what follows, we are going to provide an equivalent form of the Payne-Sattinger sets { defined by} \eqref{k--} and \eqref{k++}. For this, we introduce the following functionals, {
$$
E^c(u):=\frac12\int_{\R^3} |\nabla u|^2 \,dx-\frac{1}{6-2b}\int_{\R^3}|x|^{-b}|u|^{6-2b}\,dx,
$$
$$
K^c(u):=\int_{\R^3} |\nabla u|^2 \,dx-\int_{\R^3}|x|^{-b}|u|^{6-2b}\,dx.
$$
}
\begin{lem} \label{k}
{ Let $0<a<b<2$ and $2+\frac{2(2-a)}{3}<p < { 6-2a}$. Then} the sets defined { by} \eqref{k--} and \eqref{k++} can be represented { respectively} as
\begin{align} 
\mathcal{K}^+&=\left\{u \in H^1(\R^3) \backslash \{0\} : E(u)<m, \|\nabla u\|<\|\nabla Q\|  \right\},
\label{k+}\\
\mathcal{K}^-&=\left\{\phi \in H^1(\R^3) \backslash \{0\} : E(u)<m, \|\nabla u\|>\|\nabla Q\|  \right\}.\label{k-}
\end{align}
\end{lem}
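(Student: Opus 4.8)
The plan is to prove the two set identities \eqref{k+}--\eqref{k-} simultaneously, by showing that for $u\in H^1(\R^3)\setminus\{0\}$ the sign of $K(u)$ under the constraint $E(u)<m$ is governed by the size of $\|\nabla u\|$ relative to $\|\nabla Q\|$. Concretely, I would isolate three claims: (a) if $0<\|\nabla u\|<\|\nabla Q\|$ then $K(u)>0$; (b) if $E(u)<m$ and $\|\nabla u\|>\|\nabla Q\|$ then $K(u)<0$; and (c) there is no $u$ with $E(u)<m$ and $\|\nabla u\|=\|\nabla Q\|$. Writing $A^{+}$ and $A^{-}$ for the right-hand sets in \eqref{k+}--\eqref{k-} and $X:=\{u\in H^1(\R^3)\setminus\{0\}:E(u)<m\}$, claim (c) says $A^{+}\sqcup A^{-}=X$, which is precisely the disjoint partition of $X$ furnished by $\mathcal K^{+}\sqcup\mathcal K^{-}$ according to the sign of $K$. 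Since (a) gives $A^{+}\subseteq\mathcal K^{+}$ and (b) gives $A^{-}\subseteq\mathcal K^{-}$, comparing these two partitions of the same set $X$ with componentwise inclusions forces $A^{+}=\mathcal K^{+}$ and $A^{-}=\mathcal K^{-}$.

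The analytic core is the auxiliary function $f(y):=\tfrac12 y^2-\tfrac{1}{6-2b}\|\nabla Q\|^{-(4-2b)}y^{6-2b}$, $y\ge0$. Using Lemma \ref{sem} in the form $\int_{\R^3}|x|^{-b}|u|^{6-2b}\,dx\le \|\nabla Q\|^{-(4-2b)}\|\nabla u\|^{6-2b}$ (which follows from the equality case $u=Q$ together with $K^c(Q)=0$, i.e. $\|\nabla Q\|^2=\int_{\R^3}|x|^{-b}|Q|^{6-2b}\,dx$), and the elementary bound $E(u)\ge E^c(u)$ coming from nonnegativity of the $|x|^{-a}$-term, I obtain $E(u)\ge f(\|\nabla u\|)$. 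A direct computation shows $f$ is strictly increasing on $(0,\|\nabla Q\|)$, strictly decreasing on $(\|\nabla Q\|,\infty)$, with maximum value $f(\|\nabla Q\|)=\tfrac{2-b}{6-2b}\|\nabla Q\|^2=E^c(Q)=m$, the last equality being \eqref{cm} from Theorem \ref{nonexistence}. Hence $E(u)<m$ forces $f(\|\nabla u\|)<m=\max f$, so $\|\nabla u\|\neq\|\nabla Q\|$, which is (c). Claim (a) needs no energy constraint: the same Sobolev bound and nonnegativity of the $|x|^{-a}$-term give $K(u)\ge \|\nabla u\|^2\big(1-(\|\nabla u\|/\|\nabla Q\|)^{4-2b}\big)$, strictly positive when $0<\|\nabla u\|<\|\nabla Q\|$.

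For (b) I would argue by contradiction using the scaling $u_\lambda=\lambda^{3/2}u(\lambda\cdot)$ and the monotonicity from Lemma \ref{maximum}. Assume $E(u)<m$, $\|\nabla u\|>\|\nabla Q\|$, yet $K(u)\ge0$. The case $K(u)=0$ is impossible, since then $u$ lies on the Pohozaev manifold and $E(u)\ge m$. If $K(u)>0$, then from $\frac{d}{d\lambda}E(u_\lambda)=\frac1\lambda K(u_\lambda)$ the unique critical value $\lambda_u$ (with $K(u_{\lambda_u})=0$) satisfies $\lambda_u>1$, and $\lambda\mapsto E(u_\lambda)$ is strictly increasing on $(0,\lambda_u)$. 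Because $\|\nabla u_\lambda\|=\lambda\|\nabla u\|$ runs continuously from $0$ up past $\|\nabla Q\|$ as $\lambda$ increases from $0$ to $1$, there is $\lambda^\ast\in(0,1)$ with $\|\nabla u_{\lambda^\ast}\|=\|\nabla Q\|$; monotonicity then gives $E(u_{\lambda^\ast})<E(u)<m$, contradicting (c) applied to $u_{\lambda^\ast}$. Hence $K(u)<0$.

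The main obstacle is precisely the direction $\|\nabla u\|>\|\nabla Q\|\Rightarrow K(u)<0$, where the energy hypothesis and the scaling monotonicity of Lemma \ref{maximum} are indispensable; by contrast (a) and the gap property (c) are soft consequences of the sharp Sobolev inequality (Lemma \ref{sem}) and the identification $m=E^c(Q)$. I would finally remark that replacing $H^1(\R^3)$ by $H^1_{rad}(\R^3)$ in \eqref{k--}--\eqref{k++} affects nothing, since every step above is purely variational and insensitive to the radial restriction.
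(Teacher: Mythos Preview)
Your claims (a) and (c) coincide with the paper's opening step: both exploit the sharp Sobolev inequality (Lemma~\ref{sem}) together with the identification $m=E^c(Q)$ from \eqref{cm}. The real difference is in the hard direction. The paper argues the contrapositive $K(u)>0,\ E(u)<m \Rightarrow \|\nabla u\|<\|\nabla Q\|$ via the \emph{energy-critical} scaling $u_\nu:=\nu\,u(\nu^2\cdot)$, which leaves $\|\nabla u\|$ and $K^c(u)$ invariant while the subcritical term scales as $\int|x|^{-a}|u_\nu|^p\,dx=\nu^{p+2a-6}\int|x|^{-a}|u|^p\,dx\to0$. If $K^c(u)<0$, the intermediate value theorem yields $\nu>1$ with $K(u_\nu)=0$, whence $E(u_\nu)\ge m$ yet $E(u_\nu)<E(u)<m$; so $K^c(u)\ge0$, and then $E(u)\ge E^c(u)\ge\tfrac{2-b}{6-2b}\|\nabla u\|^2$ combined with $m=\tfrac{2-b}{6-2b}\|\nabla Q\|^2$ gives $\|\nabla u\|<\|\nabla Q\|$. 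Your route instead runs through the $L^2$-scaling $u_\lambda=\lambda^{3/2}u(\lambda\cdot)$ and the forbidden-level claim (c). The paper's choice is more robust here, since along $\nu\mapsto u_\nu$ only one term varies and it does so monotonically, so no structural lemma about critical points is needed.

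That structure is exactly where your argument has a gap. You invoke Lemma~\ref{maximum} for the unique $\lambda_u>1$ and the monotonicity of $\lambda\mapsto E(u_\lambda)$ on $(0,\lambda_u)$, but Lemma~\ref{maximum} is stated under $b<a$, whereas Lemma~\ref{k} assumes $a<b$. When $a<b$ and $p$ is near $6-2a$, the scaling exponent $\tfrac32(p-2)+a$ of the defocusing term exceeds $6-2b$, and $\lambda\mapsto K(u_\lambda)$ can vanish twice or not at all, so the ``unique $\lambda_u>1$'' is not guaranteed. The repair is short: if $K(u)>0$ and some $\lambda_0\in(0,1)$ satisfies $K(u_{\lambda_0})=0$, take the largest such $\lambda_0$; then $K(u_\lambda)>0$ on $(\lambda_0,1]$, so $E(u)>E(u_{\lambda_0})\ge m$, contradicting $E(u)<m$. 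If no such $\lambda_0$ exists then $K(u_\lambda)>0$ on all of $(0,1]$ (it is positive near $0$ and at $1$, with no zero in between), $E$ is increasing there, and your argument proceeds verbatim.
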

\begin{proof}
{ Let us first demonstrate that \eqref{k+} hold true.} Let $u \in H^1(\R^3) \backslash \{0\}$ { be} such that $E(u)<m$ and $\|\nabla u\|<\|\nabla Q\|$.  We shall prove that $K(u)>0$. {Define a function on ${ [0, +\infty)}$ by $f(y):=y-C_*^{6-2b}y^{3-b}$, where $C_*>0$ is defined by \eqref{min2}}. Owing to \eqref{min2}, we infer that
\begin{align}
{ K(u) \geq \|\nabla u\|^2-\int_{\R^3}|x|^{-b}|u|^{6-2b}\,dx \geq \|\nabla u\|^2-\left(C_*\|\nabla u\|\right)^{6-2b}=f(\|\nabla u\|^2).
}
\end{align}
Furthermore, the zeros of $f$ are at zero and $y_0:=C_*^{-\frac{6-2b}{2-b}}=\|\nabla Q\|^2$ as per \eqref{min'}. The behavior of $f$ { clearly} indicates that $f(y) > 0$ on $(0,y_0)$. Therefore, { we find that} $\|\nabla u\| < \|\nabla Q\|$ implies that $K(u) > 0$. { It then follows that
\begin{align} \label{k+1}
\left\{u \in H^1(\R^3) \backslash \{0\} : E(u)<m, \|\nabla u\|<\|\nabla Q\|  \right\} \subset \mathcal{K}^+.
\end{align}
}
On the other hand, assuming { that} $E(u) < m$ and $K(u) > 0$, we aim to demonstrate that $\|\nabla u\| < \|\nabla Q\|$. 
{ Define a scaling of $u$ by} $u_\nu := \nu u(\nu^2 \cdot)$ { for} $\nu > 0$. 
{ It is simple to compute that
$$
\|\nabla u_\nu\|=\|\nabla u\|,
$$
$$
\int_{\R^3}|x|^{-b}|u_\nu|^{6-2b}\,dx= \int_{\R^3}|x|^{-b}|u|^{6-2b}\,dx,
$$
$$
\int_{\R^3}|x|^{-a}|u_\nu|^{p}\,dx=\nu^{p+2a-6} \int_{\R^3}|x|^{-a}|u|^{p}\,dx.
$$
}
It { then} follows that
{
$$
I(u_\nu)=I(u),\quad   K^c(u_\nu):=K^c(u).
$$
\begin{align}
K(u_\nu)=\|\nabla u\|^2+\frac{(3(p-2)+2a)\nu^{p+2a-6}}{2p}\int_{\R^3}|x|^{-a}|u|^{p}\,dx-\int_{\R^3}|x|^{-b}|u|^{6-2b}\,dx.
\end{align}
}
{ Supposing that $K^c(u) < 0$, then we find $\nu >1$ such that $K(u_\nu) = 0$, because of $K(u)>0$ and $p<6-2b$. This along with Lemma \ref{clem1} leads to
\begin{align*} 
E(u_\nu)=E(u_{\nu})-\frac{2}{3(p-2)+2a} K(u_{\nu})&=I(u_{\nu}) \\
&\geq m =\inf\left\{I(v) :  v  \in H^1(\R^3) \backslash \{0\}, K(v) \leq 0\right\}.
\end{align*}
}
This { obviously} contradicts the { fact} that $E(u_\nu) < E(u) < m$ for $\nu > 1$. { Then we derive that} $K^c(u) \geq 0$. 
Consequently, by \eqref{cm}, there holds that
\begin{align*}
\left (\frac12-\frac{1}{6-2b} \right)\|\nabla u\|^2 \leq E^c(u)
< E(u) <m=\left (\frac12-\frac{1}{6-2b} \right)\|\nabla Q\|^2.   
\end{align*}
{Hence we obtain the desirable conclusion, which means that
\begin{align} \label{k+2}
\mathcal{K}^+ \subset \left\{u \in H^1(\R^3) \backslash \{0\} : E(u)<m, \|\nabla u\|<\|\nabla Q\|  \right\}.
\end{align} 
}
This achieves \eqref{k+} by combining \eqref{k+1} and \eqref{k+2}. The proof of \eqref{k-} follows similarly.
\end{proof}

\section{Existence/Nonexistence of Ground States}
\label{S3}
In this section, we consider the existence/nonexistence of solutions to \eqref{equ1} and give the proof of Theorem \ref{existence}. Let us first introduce the following minimization problem,
\begin{align} \label{min}
m_0=\inf_{u \in P} E(u),
\end{align}
where 
$$
P:=\Big\{ u \in \dot{H}^1(\R^3) \backslash \{0\} : K(\phi)=0\Big\}.
$$

\begin{proof}[Proof of Theorem \ref{existence}] To begin with, we shall prove the nonexistence of solutions for $\omega > 0$. Let $u \in H^1(\R^3)$ be a solution to \eqref{equ1}, then
\begin{align} \label{ph1}
\int_{\R^3} |\nabla u|^2 \,dx+ \omega \int_{\R^3} |u|^2 \,dx + \int_{\R^3}|x|^{-a}|u|^{p}\, dx=\int_{\R^3}|x|^{-b}|u|^{6-2b}\, dx.
\end{align}
On the other hand, by Lemma \ref{ph}, then
\begin{align} \label{ph2}
\int_{\R^3} |\nabla u|^2 \,dx+\frac{3(p-2)+2a}{2p}\int_{\R^3}|x|^{-a}|u|^{p}\, dx=\int_{\R^3}|x|^{-b}|u|^{6-2b}\, dx.
\end{align}
Combining \eqref{ph1} and \eqref{ph2}, we then see that
\begin{align} \label{none1}
\omega \int_{\R^3} |u|^2 \,dx + \left(1-\frac{3(p-2)+2a}{2p}\right)\int_{\R^3}|x|^{-a}|u|^{p}\, dx=0.
\end{align}
Since $2<p \leq 6-2a$, then
$$
0<\frac{3(p-2)+2a}{2p} \leq 1
$$
Therefore, from \eqref{none1}, we get the nonexistence result for $\omega>0$. Similarly, from \eqref{none1}, we are able to derive the nonexistence of solutions for $\omega=0$. Next we shall consider the nonexistence of radially symmetric solutions to \eqref{equ1} for $\omega<0$. Let $u \in H^1(\R^3)$ be a nonnegative solution to \eqref{equ1} for $\omega<0$. By Lemma \ref{rlem}, then
$$
-\Delta u=\left(-\omega + |x|^{-b} |u|^{4-2b}u-|x|^{-a} |u|^{p-2} \right)u \geq 0, \quad |x| \geq R,
$$
where the constant $R>0$ is large. It then follows from \cite[Lemma 4.2]{MS} that $u=0$. 

Now we are going to establish the existence of non-negative ground states in $\dot{H}^1(\R^3)$ to \eqref{equ1} for $\omega=0$ and $p=6-2a$. For this, we shall make use of the minimization problem \eqref{min}. Let $\{w_n\} \subset P$ be a minimizing sequence to \eqref{min}, i.e. $E(w_n)=m_0+o_n(1)$. In view of Lemma \ref{sem}, then
$$
\int_{\R^3} |\nabla w_n|^2 \,dx+\int_{\R^3}|x|^{-a}|w_n|^{6-2a}\, dx=\int_{\R^3}|x|^{-b}|w_n|^{6-2b}\, dx \leq S_b^{2b-6} \left(\int_{\R^3} |\nabla w_n|^2 \, dx\right)^{3-b}.
$$
This immediately shows that 
\begin{align} \label{bdd1}
\int_{\R^3} |\nabla w_n|^2 \,dx \geq S_b^{\frac{6-2b}{2-b}}.
\end{align}
In addition, since $K(w_n)=0$, then 
\begin{align} \label{bddh}
\begin{split}
E(w_n)&=E(w_n)-\frac{1}{6-2b}K(w_n) \\
&=\left(\frac 12 - \frac{1}{6-2b}\right) \int_{\R^3}|\nabla w_n|^2\,dx+ \left(\frac{1}{6-2a}-\frac{1}{6-2b}\right) \int_{\R^3} |x|^{-a}|w_n|^{6-2a} \,dx.
\end{split}
\end{align}
Invoking \eqref{bdd1} and the assumption that $b<a<2$, we then know that $m_0>0$. It then follows from \eqref{bddh} that $\{w_n\}$ is bounded in $\dot{H}^1(\R^3)$. In the spirit of the well-known Ekeland's variational principle, then there exists a bounded Palais-Smale sequence $\{u_n\} \subset \dot{H}^1(\R^3)$ for $E$ at the level $m_0>0$. Since $\{u_n\}$ is bounded in $\dot{H}^1(\R^3)$, then there exists $u \in \dot{H}(\R^3)$ such that $u_n \wto u$ in $\dot{H}^1(\R^3)$ as $n \to \infty$. In addition, since $\{u_n\}$ is a Palais-Samle sequence for $E$, 
then
\begin{align} \label{eque1}
-\Delta u_n=|x|^{-b} |u_n|^{4-2b}u_n-|x|^{-a} |u_n|^{4-2a} u_n +o_n(1)  \quad \mbox{in} \,\, \R^3,
\end{align}
It then follows that $u \in \dot{H}^1(\R^3)$ solves the equation
\begin{align} \label{eque2}
-\Delta u=|x|^{-b} |u|^{4-2b}u-|x|^{-a} |u|^{4-2a} u  \quad \mbox{in} \,\, \R^3.
\end{align}
Define $v_n:=u_n-u$, then {
\begin{align}\label{l10}
\int_{\R^3} |\nabla v_n|^2 \,dx=\int_{\R^3}  |\nabla u_n|^2  \,dx-\int_{\R^3}  |\nabla u|^2  \,dx+o_n(1),
\end{align}
\begin{align}\label{l11}
\int_{\R^3} |x|^{-a}|v_n|^{6-2a} \,dx=\int_{\R^3} |x|^{-a}|u_n|^{6-2a} \,dx-\int_{\R^3} |x|^{-a}|u|^{6-2a} \,dx+o_n(1),
\end{align}
\begin{align}\label{l111}
\int_{\R^3} |x|^{-a}|v_n|^{6-2b} \,dx=\int_{\R^3} |x|^{-a}|u_n|^{6-2b} \,dx-\int_{\R^3} |x|^{-a}|u|^{6-2b} \,dx+o_n(1).
\end{align}
}
This indicates that
\begin{align}\label{l1}
E(v_n)=E(u_n)-E(u)+o_n(1)=m_0-E(u)+o_n(1).
\end{align}
Observe that $u_n \in \dot{H}^1(\R^3)$ solves \eqref{eque1}, $u \in \dot{H}^1(\R^3)$ solves \eqref{eque2}, by \eqref{l10}, \eqref{l11} and \eqref{l111}, then 
\begin{align} \label{l2}
\langle E'(v_n), v_n \rangle &=\int_{\R^3} |\nabla v_n|^2 \,dx +\int_{\R^3} |x|^{-a}|v_n|^{6-2a} \,dx-\int_{\R^3} |x|^{-b}|v_n|^{6-2b} \,dx=o_n(1).
\end{align}

If $\|\nabla v_n\|_2=o_n(1)$ or $u \neq 0$, then $u \in \dot{H}^1(\R^3)$ is a nontrivial solution to \eqref{eque2}. In view of Lemma \ref{ph}, we then have that $K(u)=0$ and $m_0 \leq E(u)$. Now we may assume that $\|\nabla v_n\|_2 \neq o_n(1)$ and $u=0$. Let $\chi_R \in C_0^{\infty}(\R^3)$ be a cut-off function such that $\chi_R(x)=1$ for $R \leq |x| \leq 2R$, $\chi_R(x) =0$ for $|x| > 5R/2$ or $|x|<R/2$, $0 \leq \chi_R(x) \leq 1 $ and ${ |\nabla \chi_R(x)| \lesssim 1}$ for $x \in \R^3$, where $R>1$ is a constant.
 
In light of H\"older's inequality, then
\begin{align*}
\int_{R/2 \leq |x| <5R/2} |x|^{-a}|u_n|^{6-2a} \,dx &\leq \left(\int_{R/2 \leq |x| <5R/2} |x|^{-a \theta} \,dx\right)^{\frac {1}{\theta}}\left(\int_{R/2 \leq |x| <5R/2} |u_n|^{\frac{\theta(6-2a)}{\theta-1}} \,dx \right)^{\frac {\theta-1}{\theta}} \\
& \lesssim R^{\frac{3-a \theta}{\theta}} \left(\int_{R/2 \leq |x| <5R/2} |u_n|^{\frac{\theta(6-2a)}{\theta-1}} \,dx \right)^{\frac {\theta-1}{\theta}},
\end{align*}
where 
$$
\theta >\frac{3}{a}>1, \quad 2<\frac{\theta(6-2a)}{\theta-1} <6.
$$
Since $u_n \wto 0$ in $\dot{H}^1(\R^3)$ as $n \to \infty$, then $u_n \to 0$ in $L^q_{loc}(\R^3)$ as $n \to \infty$ for $2 \leq q<6$. Hence there holds that
\begin{align} \label{la}
\int_{R/2 \leq |x| <5R/2} |x|^{-a}|u_n|^{6-2a} \,dx=o_n(1).
\end{align}
Similarly, there holds that
\begin{align} \label{lb}
\int_{R/2 \leq |x| <5R/2} |x|^{-b}|u_n|^{6-2b} \,dx=o_n(1).
\end{align}
Since $\langle E'(u_n), \chi_R^2 u_n \rangle =o_n(1)$, by \eqref{la} and \eqref{lb}, then
\begin{align} \label{nes}
o_n(1)=\langle E'(u_n), \chi_R^2 u_n \rangle= \int_{\R^3} |\chi_R \nabla u_n|^2 \,dx + \int_{\R^3} \nabla u_n \cdot \left(\nabla \chi_R^2\right) u_n \,dx +o_n(1).
\end{align} 
Utilizing H\"older's inequality and the fact that $\|\nabla u_n\|_2 \lesssim 1$, we see that
\begin{align*}
\left|\int_{\R^3} \nabla u_n \cdot \left(\nabla \chi_R^2\right) u_n \,dx \right| 
&\lesssim \left(\int_{\R^3} |\nabla u_n|^2|\nabla \chi_R|^2\,dx \right)^{\frac 12}\left(\int_{\R^2}|\chi_R|^2 |u_n|^2 \,dx\right)^{\frac 12} \\
&\lesssim \left(\int_{\R^3}|\chi_R|^{2\gamma} \,dx \right)^{\frac {1}{\gamma}} \left(\int_{R/2 \leq |x| \leq 5R/2} |u_n|^{\frac{2\gamma}{\gamma-1}} \,dx\right)^{\frac {\gamma-1}{\gamma}},
\end{align*}
where 
$$
\gamma>1, \quad 2<\frac{2\gamma}{\gamma-1}<6.
$$ 
Hence there holds that
\begin{align} \label{nes1}
\left|\int_{\R^3} \nabla u_n \cdot \left(\nabla \chi_R^2\right) u_n \,dx \right| =o_n(1).
\end{align}
Consequently, from \eqref{nes} and \eqref{nes1}, we obtain that
\begin{align} \label{nl}
\int_{R \leq |x| \leq 2R} |\nabla u_n|^2 \,dx=o_n(1).
\end{align}

Let $\eta_R \in C_0^{\infty}(\R^3)$ be a cut-off function such that $\eta_R(x)=1$ for $|x| \leq R$, $\eta_R(x)=0$ for $|x| \geq 2R$ and $0 \leq \eta_R(x) \leq 1$ for $x \in \R^3$. Therefore, by \eqref{nl}, we get that
$$
\int_{\R^3} \nabla u_n \cdot \nabla (u_n \eta_R) \,dx =\int_{|x|<R} |\nabla u_n|^2 \,dx +o_n(1).
$$
In addition, by \eqref{la} and \eqref{lb}, we know that
\begin{align} \label{b1}
\int_{\R^3} |x|^{-a}|u_n|^{6-2a} \eta_R\,dx=\int_{|x| <R} |x|^{-a}|u_n|^{6-2a} \,dx +o_n(1),
\end{align}
\begin{align} \label{b2}
\int_{\R^3} |x|^{-b}|u_n|^{6-2b} \eta_R \,dx=\int_{|x| <R} |x|^{-b}|u_n|^{6-2b} \,dx +o_n(1).
\end{align}
Since $\langle E'(u_n), \eta_R u_n\rangle=o_n(1)$, by \eqref{nl}, \eqref{b1} and \eqref{b2}, then
\begin{align} \label{le}
\int_{|x|<R} |\nabla u_n|^2 \,dx+\int_{|x| <R} |x|^{-a}|u_n|^{6-2a} \,dx=\int_{|x| <R} |x|^{-b}|u_n|^{6-2b} \,dx+o_n(1).
\end{align}
Observe by Lemma \ref{sem} that
$$
\left(\int_{\R^3} |x|^{-b}|\eta_R u_n|^{6-2b} \,dx \right)^{\frac{1}{6-2b}}\leq S_b^{-1} \left(\int_{\R^3} |\nabla (\eta_R u_n)|^2 \,dx\right)^{\frac 12}.
$$

Define
$$ 
\mu_R:=\lim_{n \to \infty} \int_{|x| <R} |\nabla u_n|^2 \,dx.
$$
$$
\nu_{b, R}:=\lim_{n \to \infty}\int_{|x| <R} |x|^{-b}|u_n|^{6-2b} \,dx,
$$
Taking into account \eqref{le}, we then obtain that
$$
\nu_{b, R}^{\frac{1}{6-2b}} \leq S_b^{-1} \mu_R^{\frac 12} \leq S_b^{-1}\nu_{b, R}^{\frac 12}.
$$
It clearly implies that $\nu_{b, R}=0$ or $\nu_{b, R} \geq S_b^{{(6-2b)}/{(2-b)}}$. 

{ Since $m_0>0$ and $K(u_n)=o_n(1)$, then
$$
\nu_{b, \infty}:=\lim_{n \to \infty}\int_{\R^3}|x|^{-b}|u_n|^{6-2b}\, dx>0.
$$ 
It then shows that $\nu_{b, R}>0$ for $R>1$ large enough. Then we get that, for some $0<\nu_0<S_b^{{(6-2b)}/{(2-b)}}$, there exists $R_n > 0$ such that
\begin{align*} 
\int_{|x|<R_n}|x|^{-b}|u_n|^{6-2b}\, dx= \nu_0.
\end{align*}
}
Define
$$
\widetilde{u}_n(x):=\left(\frac{R_n}{R}\right)^{\frac 12} u_n\left(\frac{R_n}{R}x\right), \quad x \in \R^3.
$$
It is straightforward to compute that
\begin{align} \label{c11}
\int_{|x|<R}|x|^{-b}|\widetilde{u}_n|^{6-2b} \,dx=\nu_0.
\end{align}
In addition, we find that
$$
\int_{\R^3}|\nabla \widetilde{u}_n|^2 \,dx=\int_{\R^3}|\nabla u_n|^2 \,dx,
$$
$$
\int_{\R^3}|x|^{-a}|\widetilde{u}_n|^{6-2a} \,dx=\int_{\R^3}|x|^{-a}|u_n|^{6-2a} \,dx,
$$
$$
 \int_{\R^3}|x|^{-b}|\widetilde{u}_n|^{6-2b} \,dx=\int_{\R^3}|x|^{-b}|u_n|^{6-2b} \,dx.
$$
Since $\{u_n\} \subset \dot{H}^1(\R^3)$ is a bounded Palais-Smale sequence for $E$ at the level $m_0$, then $\{\widetilde{u}_n\} \subset \dot{H}^1(\R^3)$ is a also bounded Palais-Smale sequence for $E$ at the level $m_0$. Replacing the role of $\{u_n\}$ by $\{\widetilde{u}_n\}$, we can derive as previously that there exists a solution $\widetilde{u} \in \dot{H}^1(\R^3)$ such that $K(\widetilde{u})=0$ and $m_0 \leq E(\widetilde{u})$. Otherwise, there holds that
$$
\lim_{n \to \infty}\int_{|x| <R} |x|^{-b}|\widetilde{u}_n|^{6-2b} \,dx=0 \,\, \mbox{or} \,\, \lim_{n \to \infty}\int_{|x| <R} |x|^{-b}|\widetilde{u}_n|^{6-2b} \,dx \geq S_b^{{(6-2b)}/{(2-b)}}.
$$
However, this is impossible by \eqref{c11}. Consequently, from the discussion above, we can conclude that there exists a nontrivial solution $u \in \dot{H}^1(\R^3)$ such that $K(u)=0$ and $m_0 \leq E(u)$. If $E(u)=m_0$, then the proof is completed. If not, then we assume that $E(u)>m_0$. Observe that
\begin{align*}
E(v_n)+o_n(1)&=E(v_n)-\frac{1}{6-2b} \langle E'(v_n), v_n \rangle \\
&=\left(\frac 12 - \frac{1}{6-2b}\right) \int_{\R^3}|\nabla v_n|^2 \,dx+ \left(\frac{1}{6-2a}-\frac{1}{6-2b}\right) \int_{\R^3} |x|^{-a}|v_n|^{6-2a} \,dx \geq 0.
\end{align*}
This along with \eqref{l1} implies that $E(u) \leq m_0$, which clearly contradicts with the assumption that $E(u)>m_0$. It then follows that $E(u)=m_0$. 

Observe that if $K(u)=0$, then $K(|u|) \leq 0$. Hence, by Lemma \ref{maximum}, there exists $0<\lambda_{|u|}<1$ such that $K(u_{\lambda_{|u|}})=0$. In addition, we have that
\begin{align*} 
E(|u|_{\lambda_{|u|}})&=E(|u|_{\lambda_{|u|}})-\frac{1}{6-2b}K(|u|_{\lambda_{|u|}}) \\
&=\left(\frac 12 - \frac{1}{6-2b}\right) \int_{\R^3}|\nabla(|u|_{\lambda_{|u|}})|^2 \,dx+ \left(\frac{1}{6-2a}-\frac{1}{6-2b}\right) \int_{\R^3} |x|^{-a}|u_{\lambda_{|u|}}|^{6-2a} \,dx \\
&<\left(\frac 12 - \frac{1}{6-2b}\right) \int_{\R^3}|\nabla u|^2\,dx+ \left(\frac{1}{6-2a}-\frac{1}{6-2b}\right) \int_{\R^3} |x|^{-a}|u|^{6-2a} \,dx \\
&=E(u)-\frac{1}{6-2b}K(u)=E(u).
\end{align*}
Thereby, we can assume that $u$ is nonnegative. From the maximum principle, we then know that $u$ is positive. Thus, the proof is completed.
\end{proof}

\section{Global Theory}\label{S4}

In this section, we establish the global existence of solutions to { the Cauchy problem for} \eqref{equ} in the energy space $C_T(H^1)$. We shall { take advantage of some ideas in} \cite[Section 3]{XZ}. Let us start with { the} global existence result of the non-perturbed problem associated to \eqref{equ}. { Here the underlying energy functional is given by 
$$
E^c(u)=\frac12\int_{\R^3} |\nabla u|^2 \,dx-\frac{1}{6-2b}\int_{\R^3}|x|^{-b}|u|^{6-2b}\,dx.
$$
}
{
\begin{lem}\label{lm1}
Let $0<b<\frac43$, $0<\varepsilon<1$ and $u_0\in H^1_{rad}(\R^3)$ satisfy 
\begin{align}
\|\nabla u_0\|< \|\nabla Q\|, \quad E^c(u_0)<(1-\varepsilon)E^c(Q). \label{as4}
\end{align}
Then there is a unique global solution to the non-perturbed problem associated to \eqref{equ} denoted by $v\in C(\R,H^1_{rad}(\R^3))$, namely
\begin{align} \label{equ0}
\left\{
\begin{aligned}
&\textnormal{i} \partial_t v +\Delta v= - |x|^{-b} |v|^{4-2b}v, \\
&v(0)=u_0. 
\end{aligned}
\right.
\end{align}
Moreover, there holds that
\begin{align} \label{3.2'}
\|\left\langle \nabla\right\rangle v\|_{\Lambda(\R)} \lesssim C(\varepsilon, \|u_0\|),
\end{align}
where 

$|\left\langle \nabla\right\rangle v|:=|v|+|\nabla v|.$
\end{lem}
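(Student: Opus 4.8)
The plan is to regard \eqref{equ0} as the focusing energy-critical INLS \emph{below} the ground-state threshold, and to combine a contraction-mapping local theory with a Kenig--Merle-type energy-trapping argument and the global spacetime bound imported from \cite{chl}. First I would establish local well-posedness in $H^1_{rad}(\R^3)$. Working in the weighted Strichartz spaces $W_b(I)$ and $\nabla W(I)$ introduced above, I would estimate the Duhamel term attached to $|x|^{-b}|v|^{4-2b}v$ using the Strichartz estimates of Lemma \ref{str}, the Caffarelli--Kohn--Nirenberg inequality (Lemma \ref{ckn}) to absorb the singular weight $|x|^{-b}$, and the radial Strauss bound (Lemma \ref{rlem}) to handle the region $|x|\ge R$. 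A contraction on a suitable ball in $C_T(H^1)\cap\nabla W((-T,T))$ then yields a unique local solution on a maximal interval, together with the blow-up alternative: if $T_{\max}<\infty$ then $\|\nabla v(t)\|\to\infty$ as $t\to T_{\max}^-$.

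Next I would run the energy-trapping argument. Since $E^c$ is conserved, $E^c(v(t))=E^c(u_0)<(1-\varepsilon)E^c(Q)$ throughout the interval of existence. By the sharp Sobolev inequality of Lemma \ref{sem} one has $\int_{\R^3}|x|^{-b}|v|^{6-2b}\,dx\le S_b^{-(6-2b)}\|\nabla v\|^{6-2b}$, so that $E^c(v(t))\ge g(\|\nabla v(t)\|^2)$ with $g(y):=\tfrac12 y-\tfrac{1}{6-2b}S_b^{-(6-2b)}y^{3-b}$. A direct computation (using $\tfrac{3-b}{6-2b}=\tfrac12$) shows that $g$ attains its maximum at $y=\|\nabla Q\|^2$ with value $E^c(Q)$, where the equality case of Lemma \ref{sem} is exactly $u=Q$. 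Because $\|\nabla u_0\|<\|\nabla Q\|$ and $E^c(u_0)<E^c(Q)$, the continuity of $t\mapsto\|\nabla v(t)\|$ forbids the kinetic energy from crossing the level $\|\nabla Q\|^2$; hence $\|\nabla v(t)\|<\|\nabla Q\|$ for all $t$, and quantitatively the energy gap $\varepsilon$ yields a uniform bound $\|\nabla v(t)\|^2\le(1-\delta)\|\nabla Q\|^2$ with $\delta=\delta(\varepsilon)>0$. Combined with conservation of mass this gives $\sup_t\|v(t)\|_{H^1}<\infty$, which through the blow-up alternative rules out finite-time blow-up and produces the global solution $v\in C(\R,H^1_{rad}(\R^3))$.

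Finally, for the spacetime estimate \eqref{3.2'} I would invoke the concentration--compactness--rigidity scheme for the focusing energy-critical INLS established in \cite[Theorem 1]{chl}. The coercivity furnished by the energy trapping places $v$ strictly below the ground-state threshold, so the scattering theory there applies and returns a finite global Strichartz norm depending only on $\varepsilon$ and $\|u_0\|$. I expect this last step to be the main obstacle: whereas the local theory and the energy trapping are essentially routine, the uniform global bound rests on the full rigidity argument, and it is precisely here that the restriction $0<b<\tfrac43$ enters, being needed to control the weighted Strichartz norms of the energy-critical nonlinearity.
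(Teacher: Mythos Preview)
Your approach is essentially the paper's: both rest on importing the sub-threshold global theory for the focusing energy-critical INLS from \cite{chl}. The paper is simply more economical---it cites \cite[Theorem~1 and Proposition~1]{chl} directly for global existence in $\dot H^1_{rad}$ together with the bound $\|\nabla v\|_{\Lambda(\R)}\lesssim_\varepsilon 1$, rather than reproducing the local theory and the energy trapping as you propose. One point you gloss over but the paper treats explicitly: the result from \cite{chl} lives at the $\dot H^1$ level, so to reach \eqref{3.2'} one must still bound $\|v\|_{\Lambda(\R)}$ at the $L^2$ level. The paper does this by Duhamel, estimating
\[
\||x|^{-b}|v|^{4-2b}v\|_{L^2_tL^{6/5}_x}\lesssim\|\nabla v\|_{W_b}^b\,\|v\|_{S}^{4-3b}\,\|v\|_{W_b}
\]
via H\"older together with the Hardy/CKN inequality of Lemma~\ref{ckn}, and then closing using the already-known finiteness of $\|\nabla v\|_{W_b(\R)}$ and $\|v\|_{S(\R)}$ from \cite{chl}; this is precisely where the dependence on $\|u_0\|$ in \eqref{3.2'} enters. (As an aside, the Strauss inequality is not needed for the local theory of \eqref{equ0}: the weighted Strichartz/Hardy estimates already cope with the singular factor $|x|^{-b}$ at all scales.)
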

}
\begin{proof}
{ By \cite[Theorem 1 and Proposition 1]{chl}, then there exists a unique global solution $v\in C(\R,\dot H^1_{rad}(\R^3))$ to \eqref{equ0} satisfying 
$\|\nabla v\|_{\Lambda(\R)}\lesssim_\varepsilon 1.$ 
}
{ In addition}, by { the integral Duhamel formula, the} Strichartz estimates { in Lemma \ref{str}, H\"older's inequality and Hardy's inequality} as stated in Lemma \ref{ckn}, one writes that
\begin{align} \label{3.4}
\begin{split}
\| v\|_{\Lambda(\R)}
&\lesssim \|u_0\|+\|(|x|^{-1}v)^b|v|^{4-3b}v\|_{L^2(\R,L^{6/5})}\\
&\lesssim \|u_0\|+\||x|^{-1}v\|_{W_b(\R)}^b\|v\|_{S(\R)}^{4-3b}\|v\|_{W_b(\R)}\\
&\lesssim \|u_0\|+\|\nabla v\|_{W_b(\R)}^b\|v\|_{S(\R)}^{4-3b}\|v\|_{\Lambda(\R)},
\end{split}
\end{align}
{ where we used H\"older's inequality via the inequalities
\begin{align*}
\frac12=\frac{b}{\frac{10(1+b)}{1+3b}}+\frac{4-3b}{10}+\frac{1}{\frac{10(1+b)}{1+3b}},\\
\frac{5}{6}=\frac{b}{\frac{30(1+b)}{13+9b}}+\frac{4-3b}{10}+\frac{1}{\frac{30(1+b)}{13+9b}}.
\end{align*}
}
Then the proof { is completed} by applying \eqref{3.4} and the property that { $v\in S(\R)$ and $\nabla v \in W_b(\R)$} proved { in \cite[Proposition 1.1]{chl}.
}
\end{proof}

{ Now, we give a good local well-posedness of solutions to the Cauchy problem for \eqref{equ}. It plays an important role in the study of the global well-posedness and scattering of solutions.}

\begin{lem}\label{prt}
{ Let $0<a<1$, $0<b<\frac43$, $0<\varepsilon<1$, and $2<p<6-2a$. Let $u_0 \in H^1_{rad}(\R^3)$ satisfy \eqref{as4}. Then there exist a constant $0<T<<1$ and a unique solution $u \in C([0, T), H^1_{rad}(\R^3))$ to \eqref{equ} with $u(0)=u_0 \in H^1_{rad}(\R^3)$} satisfying
\begin{align*} 
\|\left\langle \nabla\right\rangle u\|_{\Lambda(0,T)}
&\leq C(\varepsilon, \|u_0\|).
\end{align*}
\end{lem}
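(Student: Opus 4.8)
The plan is to construct $u$ by a contraction argument that views the full evolution as a perturbation of the solution $v$ to the non-perturbed problem \eqref{equ0} supplied by Lemma \ref{lm1}. First I would fix $v \in C(\R, H^1_{rad}(\R^3))$ solving \eqref{equ0} with $v(0)=u_0$ and $\|\langle \nabla\rangle v\|_{\Lambda(\R)} \le C(\varepsilon,\|u_0\|)$. Since $v \in S(\R)$ and $\nabla v \in W_b(\R)$ globally (cf. \cite[Proposition 1.1]{chl}), absolute continuity of the Lebesgue integral yields, for any prescribed $\eta>0$, a time $0<T\ll 1$ so small that the critical norms of $v$ over the short interval, namely $\|v\|_{S(0,T)}$ and $\|\nabla v\|_{W_b(0,T)}$, are below $\eta$. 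This smallness is the device that will let me absorb the energy-critical nonlinearity, which on its own gains no positive power of $T$.

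Next I would set $w:=u-v$ and seek $w$ as a fixed point, with $w(0)=0$, of the Duhamel map
\[
\Phi(w)(t) := -\textnormal{i}\int_0^t e^{\textnormal{i}(t-s)\Delta}\Big[\,|x|^{-a}|v+w|^{p-2}(v+w) - |x|^{-b}\big(|v+w|^{4-2b}(v+w) - |v|^{4-2b}v\big)\Big]\,ds,
\]
on the complete metric space $B:=\{w : w(0)=0,\ \|\langle \nabla\rangle w\|_{\Lambda(0,T)} \le \delta\}$ endowed with the $\Lambda(0,T)$-metric. A short computation with the Duhamel formulas for \eqref{equ0} and \eqref{equ} shows that a fixed point $w=\Phi(w)$ corresponds exactly to $u=v+w$ solving \eqref{equ} with $u(0)=u_0$.

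Then I would estimate the two contributions separately via the inhomogeneous Strichartz estimate of Lemma \ref{str}. For the energy-critical difference $|x|^{-b}(|u|^{4-2b}u-|v|^{4-2b}v)$, I would reproduce the H\"older plus Caffarelli--Kohn--Nirenberg/Hardy scheme (Lemma \ref{ckn}) used in \eqref{3.4}, bounding the difference by a factor of order $(\|v\|_{S(0,T)}+\|w\|_{S(0,T)})^{4-3b}(\|\nabla v\|_{W_b(0,T)}+\|\nabla w\|_{W_b(0,T)})^{b}$ multiplying $\|\langle \nabla\rangle w\|_{\Lambda(0,T)}$; by the smallness of $v$'s critical norms and by taking $\delta$ small, this is at most $\tfrac13\|\langle \nabla\rangle w\|_{\Lambda(0,T)}$. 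For the energy-subcritical term $|x|^{-a}|u|^{p-2}u$, where $0<a<1$ and $2<p<6-2a$, I would invoke \cite[Lemma 3.4]{guz}, which controls its dual Strichartz norm on $(0,T)$ with a gain of a positive power $T^{\theta}$ of the interval length; since $v$ and $w$ are bounded in the relevant spaces, this contributes at most $CT^{\theta}\le \tfrac13\delta$ after shrinking $T$. The analogous difference estimates give the contraction. Thus $\Phi$ maps $B$ into itself and is a contraction, so Banach's theorem produces a unique $w\in B$; combining the Strichartz bound with Lemma \ref{str} gives $u=v+w\in C([0,T),H^1_{rad})$ and $\|\langle \nabla\rangle u\|_{\Lambda(0,T)}\le \|\langle \nabla\rangle v\|_{\Lambda(0,T)}+\delta\le C(\varepsilon,\|u_0\|)$, with uniqueness following by the standard Strichartz difference argument.

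The main obstacle is precisely the energy-critical term $|x|^{-b}|u|^{4-2b}u$: unlike the subcritical term it does not improve with $T$, so a naive small-time contraction on the full nonlinearity fails. The argument must instead lean on comparing $u$ to the reference solution $v$ of the critical problem and on the smallness of $v$'s critical Strichartz norms over short intervals, which is what makes the critical difference term a genuine perturbation; the remaining care is in carrying the derivative $\langle \nabla\rangle$ through the nonlinearities, which is legitimate because $z\mapsto |z|^{4-2b}z$ is $C^1$ for $b<\tfrac43$ and $z\mapsto |z|^{p-2}z$ is $C^1$ for $p>2$.
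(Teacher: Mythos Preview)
Your proposal is correct and follows essentially the same perturbative strategy as the paper: write $u=v+w$ with $v$ the global solution of \eqref{equ0} from Lemma \ref{lm1}, and run a contraction for $w$ using the $T^{\theta}$ gain from \cite[Lemma 3.4]{guz} on the subcritical piece together with smallness of the critical Strichartz norms of $v$ for the energy-critical difference. The only cosmetic difference is that the paper partitions $[0,T]$ into finitely many subintervals $I_k$ on which the relevant norms of $v$ are below $\delta$ and argues by induction over $k$, whereas you shrink $T$ once so that $\|v\|_{S(0,T)}$ and $\|\nabla v\|_{W_b(0,T)}$ are small; both devices exploit the same absolute-continuity mechanism and lead to the same bound $\|\langle\nabla\rangle u\|_{\Lambda(0,T)}\le C(\varepsilon,\|u_0\|)$.
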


\begin{proof}
{ To prove the above lemma, we are going to look for a local solution to \eqref{equ} of the form $u:=v+w$, where $v\in C(\R,H^1_{rad}(\R^3))$ is the solution to \eqref{equ0} given by Lemma \ref{lm1} and $w \in C([0, T), H^1_{rad}(\R^3))$ satisfies the equation
\begin{align} \label{3.6}
\left\{
\begin{aligned}
&\textnormal{i}w_t+\Delta w=|x|^{-a}|v+w|^{p-2}(v+w)-|x|^{-b}\left(|v+w|^{4-2b}(v+w)-|v|^{4-2b}v\right),\\ 
&w(0)=0,
\end{aligned}
\right.
\end{align}
where $T>0$ is a small constant to be specified later.
Let $0<\delta<<1$ be a small constant.} In view of \eqref{3.2'}, one { then} splits $[0,T]$ for some $M:=M(\delta, \varepsilon,\|u_0\|)$ as follows, {
\begin{align}
[0,T]:=\bigcup_{0\leq k\leq M}[t_k,t_{k+1}]:=\bigcup_{0\leq k\leq M}I_k, 
\end{align}
\begin{align} \label{3.7}
\|\left\langle \nabla\right\rangle v\|_{\Lambda(I_k)} <\delta,\quad\mbox{for all} \,\,\, 0\leq k\leq M. 
\end{align}
}
Let us resolve \eqref{3.6} by inductive arguments. Assume that there is a solution to \eqref{3.6} on $I_{k-1}$ such that, { for some constants $C, \nu>0$}, 
\begin{align} \label{3.9}
\|\left\langle \nabla\right\rangle w\|_{\Lambda(I_{k-1})}<(2C)^{k-1}T^\nu.
\end{align}
{  We are going to consider the problem on $I_k$. We now introduce} the integral formula, a metric space and a complete norm as follows, {
\begin{align*}
f(w)&:=e^{\textnormal{i}(t-t_k)\Delta}w(t_k)-\textnormal{i}\int_{t_k}^t e^{\textnormal{i}(\cdot-s)\Delta}\left(|x|^{-a}|v+w|^{p-2}(v+w) \right.\\
&\quad \left.-|x|^{-b}\left(|v+w|^{4-2b}(v+w)-|v|^{4-2b}v\right)\right)\,ds, 
\end{align*}
$$
X_k:=\left\{w\in C(I_k,H^1_{rad}(\R^3)) :\quad \|\left\langle \nabla\right\rangle w\|_{\Lambda(I_{k})}<(2C)^kT^\nu\right\}, 
$$

$$
d(w,w'):=\|w-w'\|_{\Lambda(I_k)}. 
$$
}
{ Taking into account} the Strichartz estimates { in Lemma \ref{str}}, one writes that {
\begin{align} \label{3.10}
\begin{split}
\|\left\langle \nabla\right\rangle f(w)\|_{\Lambda(I_k)}
& \lesssim \|w(t_k)\|_{H^1}+\left\|\left\langle \nabla\right\rangle\left(|x|^{-a}|v+w|^{p-2}(v+w)\right)\right\|_{\Lambda'(I_k)}\\
& \quad +\left\|\left\langle \nabla\right\rangle\left(|x|^{-b}\left(|v+w|^{4-2b}(v+w)-|v|^{4-2b}v\right)\right)\right\|_{\Lambda'(I_k)}\\
& \lesssim \|w(t_k)\|_{H^1}+\left\||x|^{-a}|v+w|^{p-2}\left\langle \nabla\right\rangle(v+w)\right\|_{\Lambda'(I_k)}\\
&\quad+\left\||x|^{-a-1}|v+w|^{p-2}(v+w)\right\|_{\Lambda'(I_k)}\\
& \quad+\left\||x|^{-b}\left\langle \nabla\right\rangle \left(|v+w|^{4-2b}(v+w)-|v|^{4-2b}v\right)\right\|_{\Lambda'(I_k)} \\
& \quad + \left\||x|^{-b-1}\left(|v+w|^{4-2b}(v+w)-|v|^{4-2b}v\right)\right\|_{\Lambda'(I_k)}.
\end{split}
\end{align}
Making use of \cite[Lemma 3.4]{guz}, one has that
\begin{align} \label{3.12}
\begin{split}
&\left\||x|^{-a}|v+w|^{p-2}\left\langle \nabla\right\rangle(v+w)\right\|_{\Lambda'(I_k)}+\left\||x|^{-a-1}|v+w|^{p-2}(v+w)\right\|_{\Lambda'(I_k)}\\
&\lesssim T^{\nu}\|\nabla(v+w)\|_{\Lambda(I_k)}^{p-2}\|\left\langle \nabla\right\rangle(v+w)\|_{\Lambda(I_k)}.
\end{split}
\end{align}
Taking advantage of the spirit in the proof of \cite[Proposition 2.1]{chl}, one gets that
\begin{align} \nonumber
&\left\||x|^{-b}\left\langle \nabla\right\rangle \left(|v+w|^{4-2b}(v+w)-|v|^{4-2b}v\right)\right\|_{\Lambda'(I_k)} + \left\||x|^{-b-1}\left(|v+w|^{4-2b}(v+w)-|v|^{4-2b}v\right)\right\|_{\Lambda'(I_k)}\\ 
&\lesssim \left(\|\left\langle \nabla\right\rangle v\|_{\Lambda(I_k)}^{4-2b}+\|\left\langle \nabla\right\rangle w\|_{\Lambda(I_k)}^{4-2b}+\|\left\langle \nabla\right\rangle v\|_{\Lambda(I_k)}^{3-2b}\|\left\langle \nabla\right\rangle w\|_{\Lambda(I_k)}\right)\|\left\langle \nabla\right\rangle w\|_{\Lambda(I_k)}. \label{3.11}
\end{align}
Combining \eqref{3.7}, \eqref{3.9}, \eqref{3.10}, \eqref{3.11} and \eqref{3.12}, we then conclude that, 
\begin{align*}
\|\left\langle \nabla\right\rangle f(w)\|_{\Lambda(I_k)}
&\lesssim \|w(t_k)\|_{H^1}+ T^{\nu}\left(\delta+\|\left\langle \nabla\right\rangle w\|_{\Lambda(I_k)}^{p-2}\right)\left(\delta+\|\left\langle \nabla\right\rangle w\|_{\Lambda(I_k)}\right)\\
&\quad + \left(\delta^{4-2b}+\|\left\langle \nabla\right\rangle w\|_{\Lambda(I_k)}^{4-2b}+\delta^{3-2b}\|\left\langle \nabla\right\rangle w\|_{\Lambda(I_k)}\right)\|\left\langle \nabla\right\rangle w\|_{\Lambda(I_k)}\\
&\leq C(2C)^{k-1}T^\nu+CT^{\nu}\left(\delta+((2C)^kT^{\nu})^{p-2}\right)\left(\delta+(2C)^kT^\nu\right)\\
&\quad + C\left(\delta^{4-2b}+((2C)^kT^\nu)^{4-2b}+\delta^{3-2b}(2C)^kT^\nu\right)(2C)^kT^\nu\\
&< (2C)^{k}T^\nu,
\end{align*}
where we picked $0<\delta, T<<1$ in the last line}. By the fact $M=M(\varepsilon,\delta, \|u_0\|)$, we can choose $T$ uniformly of the process of { the} induction. { Moreover, proceeding as the calculus in \eqref{3.10}, we similarly have that, for any $w,w'\in X_k$, 
\begin{align}
d(f(w),f(w')) \leq \theta d(w,w'), 
\end{align}
}
{ where $0<\theta<1$. At this point, applying the Picard fixed point theorem, then there is a unique solution $w\in C(I_k,H^1_{rad}(\R^3))$ to \eqref{3.6} satisfying 
$$
\|\left\langle \nabla\right\rangle w\|_{\Lambda(I_{k})}<(2C)^{k}T^\nu.
$$ 
Therefore, there is a unique solution $w\in C([0, T),H^1_{rad}(\R^3))$ to \eqref{3.6} such that
\begin{align}
\|\left\langle \nabla\right\rangle w\|_{\Lambda(0,T)}\leq \sum_{k=1}^M\|\left\langle \nabla\right\rangle w\|_{\Lambda(I_k)} \leq \sum_{k=1}^M(2C)^{k}T^\nu \leq M(2C)^{M}T^\nu \leq C(\varepsilon,\delta, \|u_0\|).\label{3.16}
\end{align}
Then $u:= v+w \in C([0, T), H^1_{rad}(\R^3))$ is the solution to the Cauchy problem for \eqref{equ}. By \eqref{3.2'} and \eqref{3.16}, one gets that
\begin{align}
\|\left\langle \nabla\right\rangle u\|_{\Lambda(0,T)}
&\leq \|\left\langle \nabla\right\rangle w\|_{\Lambda(0,T)}+\|\left\langle \nabla\right\rangle v\|_{\Lambda(0,T)} \leq C(\varepsilon, \|u_0\|).\label{3.17}
\end{align}
}
This ends the proof.
\end{proof}

{ Next, we present a coercivity property of solutions to the Cauchy problem for \eqref{equ}}.

{
\begin{lem}\label{crcv} 
Let $0<a<1$, $0<b<\frac43$, $2<p<6-2b$, $0<\varepsilon<1$ and $u_0\in H^1_{rad}(\R^3)$ satisfy 
\begin{align} \label{as5}
\|\nabla u_0\|< \|\nabla Q\|, \quad E(u_0)<(1-\varepsilon)E^c(Q). 
\end{align}
Then there exists $0<\widetilde{\varepsilon}<1$ such that the maximal solution to the Cauchy problem for \eqref{equ} with $u(0)=u_0$ denoted by $u\in C([0, T_{max}), H^1_{rad}(\R^3))$ satisfies
\begin{align}
\|\nabla u\|^2_{{L^\infty}([0, T{_{max}),L^2)}}\leq (1-\widetilde{\varepsilon})\|\nabla Q\|^2.\label{3.18}
\end{align}
Moreover, there holds that $T_{max}=+\infty$ and for any admissible pair $(q,r)\in\Lambda$ and any compact interval $I\subset\R$, we have that
\begin{align} \label{3.19}
\|\left\langle \nabla\right\rangle u\|_{L^q(I,L^r)}\leq (1+|I|^\frac1q)C(\varepsilon,\|u_0\|).
\end{align}
\end{lem}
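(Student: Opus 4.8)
The plan is to deduce the coercivity bound \eqref{3.18} from a variational trapping argument, then read off global existence from the blow-up alternative, and finally obtain the global Strichartz bound \eqref{3.19} by applying Lemma \ref{prt} on subintervals of uniform length.

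\emph{Step 1 (coercivity).} Since the perturbation is defocusing, the term $\frac1p\int_{\R^3}|x|^{-a}|u|^p\,dx$ is nonnegative, so that $E^c(u)\le E(u)$ for every $u\in H^1(\R^3)$. Combining this with the conservation of energy gives, for all $t\in[0,T_{max})$, the inequality $E^c(u(t))\le E(u(t))=E(u_0)<(1-\varepsilon)E^c(Q)$. I introduce the scalar function $f(y):=\tfrac12 y^2-\tfrac1{6-2b}C_*^{6-2b}y^{6-2b}$ for $y\ge 0$, where $C_*$ is the sharp constant from \eqref{min2}. By Lemma \ref{sem} one has $f(\|\nabla u(t)\|)\le E^c(u(t))$. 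A direct computation using \eqref{min'} shows that $f$ is strictly increasing on $[0,\|\nabla Q\|]$, attains its maximum at $y=\|\nabla Q\|$, and, recalling \eqref{cm}, satisfies $f(\|\nabla Q\|)=E^c(Q)=m$. Hence there is a unique $y_\varepsilon\in(0,\|\nabla Q\|)$ with $f(y_\varepsilon)=(1-\varepsilon)E^c(Q)$. The map $t\mapsto\|\nabla u(t)\|$ is continuous, its range is an interval containing $\|\nabla u_0\|<y_\varepsilon$, and it can never equal $y_\varepsilon$: if $\|\nabla u(t_*)\|=y_\varepsilon$ for some $t_*$, then $f(y_\varepsilon)\le E^c(u(t_*))\le E(u_0)<(1-\varepsilon)E^c(Q)=f(y_\varepsilon)$, a contradiction. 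By connectedness the range lies in $[0,y_\varepsilon)$, so $\|\nabla u(t)\|<y_\varepsilon<\|\nabla Q\|$ for all $t$. Setting $\widetilde\varepsilon:=1-(y_\varepsilon/\|\nabla Q\|)^2\in(0,1)$ yields \eqref{3.18}.

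\emph{Step 2 (globalization).} By \eqref{3.18} together with the conservation of mass $\|u(t)\|=\|u_0\|$, the quantity $\|u(t)\|_{H^1}$ remains uniformly bounded on $[0,T_{max})$. The blow-up alternative from the local well-posedness proposition forbids $\lim_{t\to T_{max}^-}\|\nabla u(t)\|=+\infty$, whence $T_{max}=+\infty$.

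\emph{Step 3 (global Strichartz estimate).} The key observation is that Step 1 makes the hypotheses of the local theory uniform in time: for every $t_0\in\R$ the profile $u(t_0)$ satisfies $\|\nabla u(t_0)\|<\|\nabla Q\|$ and $E^c(u(t_0))\le E(u_0)<(1-\varepsilon)E^c(Q)$, i.e.\ condition \eqref{as4} holds with the \emph{same} $\varepsilon$. Applying Lemma \ref{prt} with initial time $t_0$ thus produces, on an interval of length $T=T(\varepsilon,\|u_0\|)$ independent of $t_0$ (also using mass conservation), the bound $\|\langle\nabla\rangle u\|_{\Lambda(t_0,t_0+T)}\le C(\varepsilon,\|u_0\|)$. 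Given a compact interval $I\subset\R$, I partition it into $N\le 1+|I|/T$ consecutive subintervals $I_j$ of length at most $T$; on each, for any admissible pair $(q,r)$, one has $\|\langle\nabla\rangle u\|_{L^q(I_j,L^r)}\le\|\langle\nabla\rangle u\|_{\Lambda(I_j)}\le C(\varepsilon,\|u_0\|)$. Summing the $q$-th powers in time,
$$
\|\langle\nabla\rangle u\|_{L^q(I,L^r)}=\Big(\sum_{j=1}^{N}\|\langle\nabla\rangle u\|_{L^q(I_j,L^r)}^{q}\Big)^{1/q}\le N^{1/q}\,C(\varepsilon,\|u_0\|)\le(1+|I|^{1/q})\,C(\varepsilon,\|u_0\|),
$$
where in the last step I used $N^{1/q}\le(1+|I|/T)^{1/q}\le 1+T^{-1/q}|I|^{1/q}$ and absorbed $T^{-1/q}$ into the constant. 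This is \eqref{3.19}.

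\emph{Main obstacle.} The crux is Step 1: one must exploit that the defocusing perturbation only raises the energy ($E^c\le E$) and that the sharp-constant function $f$ peaks exactly at the ground-state level $f(\|\nabla Q\|)=E^c(Q)=m$, so that the sub-threshold condition \eqref{as5} is propagated by the flow with a quantitative gap $\widetilde\varepsilon$ depending only on $\varepsilon$. Once this coercivity is in place, the globalization and the time-uniform applicability of Lemma \ref{prt}—hence the Strichartz bound—follow by routine patching.
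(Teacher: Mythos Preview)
Your proof is correct and follows essentially the same route as the paper: the coercivity \eqref{3.18} is obtained via the same continuity/trapping argument based on the sharp inequality \eqref{min2} (the paper works with the rescaled variable $y=\|\nabla u\|^2/\|\nabla Q\|^2$ rather than your $f$, but the content is identical), and \eqref{3.19} is likewise derived from Lemma \ref{prt}. Your Step 3 spells out the patching over subintervals of uniform length more explicitly than the paper's one-line ``As a consequence of Lemma \ref{prt}'', and the extraneous identification $E^c(Q)=m$ you invoke is not actually needed---only $f(\|\nabla Q\|)=E^c(Q)$ is, which follows directly from \eqref{min'}.
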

}
\begin{proof} 
{ By \eqref{defq}, \eqref{as5} and the energy conservation, we write that
\begin{align}\label{3.26} \nonumber
\|\nabla u\|^2-\frac{C_*^{6-2b}}{3-b}\|\nabla u\|^{6-2b} \leq 2E(u)=2E(u_0)&<(1-\varepsilon)\left(\|\nabla Q\|^2-\frac{1}{3-b}\int_{\R^3}|x|^{-b}|Q|^{6-2b}\,dx\right)\\
&=\frac{(1-\varepsilon)(2-b)}{3-b}\|\nabla Q\|^2.
\end{align}
Taking $y:=\frac{\|\nabla u\|^2}{\|\nabla Q\|^2}$ and using \eqref{min'} and \eqref{3.26} give that
\begin{align} \label{3.27}
\begin{split}
\frac{3-b}{2-b}\left(y-\frac{1}{3-b}y^{3-b}\right)
&=\frac{3-b}{2-b}\left(y-\frac{C_*^{6-2b}}{3-b}\|\nabla Q\|^{4-2b}y^{3-b}\right)\\
&=\frac{3-b}{2-b}\left(\frac{\|\nabla u\|^2}{\|\nabla Q\|^2}-\frac{C_*^{6-2b}}{3-b}\frac{\|\nabla u\|^{6-2p}}{\|\nabla Q\|^2}\right)\\
&<(1-\varepsilon).
\end{split}
\end{align}
Therefore, \eqref{3.18} follows from continuity arguments by using \eqref{3.27} and the fact that $\|\nabla u_0\|< \|\nabla Q\|$. Furthermore, we have that $T_{max}=+\infty$ by \eqref{3.18}.} { As a consequence of Lemma \ref{prt}, we then get \eqref{3.19}}. This completes the proof.
\end{proof}

\section{Energy Scattering }\label{S5}
{ In this section, we are going to discuss the energy scattering of solutions to \eqref{equ} and establish Theorem \ref{scattering}.} We shall assume that $0<a<1$, $0<b<\frac43$ and $2<p<6-2b$. In the sprit of Lemma \ref{crcv}, { we denote by $u\in C(\R,H^1_{rad}(\R^3))$ the global solution to the Cauchy problem for \eqref{equ} with $u(0)=u_0 \in H^1_{rad}(\R^3)$ satisfying \eqref{as5} such that
\begin{align}\label{3.18'}
\|\nabla u\|_{L^\infty(\R,L^2)}^2\leq (1-\widetilde{\varepsilon})\|\nabla Q\|^2,
\end{align}
where $0<\widetilde{\varepsilon}<1$.
}
\subsection{Virial/Morawetz Estimate}\label{vrll}

Let $\psi:\R^3\to\R$ be { a smooth function}. Define the variance potential { by}
\begin{equation}\label{vrl}
V_\psi[u]:=\int_{\R^3}\psi(x)|u(\cdot,x)|^2\,dx.
\end{equation}
{ Define} the Morawetz action { by}
\begin{equation}\label{mrwtz}
M_\psi[u]:=2\Im\int_{\R^3} \overline{u}(\nabla\psi\cdot\nabla u)\,dx. 
\end{equation}
\begin{lem}\label{mrwz1}(\cite[Lemma 4.5]{cc}) {
Let $u\in C(\R, H^1_{rad}(\R^3))$ be the global solution to the Cauchy problem for \eqref{equ}. Then the following identity holds true for any $t \in \R$,
\begin{align*}
V_\psi''[u]=M_\psi'[u]
&=4 \Re \sum_{k,l=1}^3\int_{\R^3}\partial_l\partial_k\psi \partial_ku\partial_l\bar u\,dx-\int_{\R^3}\Delta^2\psi|u|^2\,dx\\
&\quad +\frac{2(p-2)}{p}\int_{\R^3}\Delta\psi|x|^{-a}|u|^{p}\,dx-\frac{4}{p}\int_{\R^3}\nabla\psi\cdot\nabla(|x|^{-a})|u|^{p}\,dx\\
& \quad -\frac{4-2b}{3-b}\int_{\R^3}\Delta\psi|x|^{-b}|u|^{6-2b}\,dx+\frac{2}{3-b}\int_{\R^3}\nabla\psi\cdot\nabla(|x|^{-b})|u|^{6-2b}\,dx.
\end{align*}
}
\end{lem}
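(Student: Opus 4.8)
The identity is a virial/Morawetz computation, and the plan is to obtain it in two differentiation steps, using equation \eqref{equ} to trade time derivatives for spatial ones and then integrating by parts. Throughout, I would carry out the manipulations formally and justify every integration by parts at the end by a density/approximation argument: regularize the singular weights $|x|^{-a}$, $|x|^{-b}$ away from the origin and use the $H^1_{rad}$ regularity of $u$ furnished by the well-posedness theory together with the radial decay of Lemma \ref{rlem} to kill the boundary terms at the origin and at infinity.

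First step: differentiate $V_\psi[u]$ once. Writing $\frac{d}{dt}V_\psi[u]=2\Re\int_{\R^3}\psi\,\bar u\,\partial_t u\,dx$ and substituting $\partial_t u=\textnormal{i}(\Delta u-|x|^{-a}|u|^{p-2}u+|x|^{-b}|u|^{4-2b}u)$ from \eqref{equ}, the two nonlinear contributions are annihilated by $\Re$, since $\bar u\cdot|x|^{-c}|u|^{r-2}u=|x|^{-c}|u|^r$ is real and is therefore purely imaginary after multiplication by $\textnormal{i}$. Only the Laplacian term survives, and one integration by parts (using $\Im(\psi|\nabla u|^2)=0$) yields $\frac{d}{dt}V_\psi[u]=2\Im\int_{\R^3}\bar u\,(\nabla\psi\cdot\nabla u)\,dx=M_\psi[u]$, which is the first equality.

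Second step: differentiate $M_\psi[u]$. Setting $G:=|x|^{-a}|u|^{p-2}u-|x|^{-b}|u|^{4-2b}u$ and using $\partial_t u=\textnormal{i}(\Delta u-G)$, $\partial_t\bar u=-\textnormal{i}(\Delta\bar u-\bar G)$, I would split $M_\psi'[u]$ into a linear part collecting the $\Delta$ contributions and a nonlinear part collecting the $G$ contributions. The linear part is the classical virial computation: after two integrations by parts it produces exactly $4\Re\sum_{k,l=1}^3\int_{\R^3}\partial_k\partial_l\psi\,\partial_k u\,\partial_l\bar u\,dx-\int_{\R^3}\Delta^2\psi\,|u|^2\,dx$. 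For the nonlinear part it is economical to treat one generic term $\sigma|x|^{-c}|u|^{r-2}u$ with $\sigma=\pm1$: using the pointwise identities $2\Re(\bar u\,\partial_j u)=\partial_j(|u|^2)$ and $2\Re(\bar u\,\partial_j(|u|^{r-2}u))=\frac{2(r-1)}{r}\partial_j(|u|^r)$, the two $G$-contributions collapse to $-\sigma\frac{2(r-2)}{r}\int_{\R^3}|x|^{-c}\nabla\psi\cdot\nabla(|u|^r)\,dx-2\sigma\int_{\R^3}\nabla\psi\cdot\nabla(|x|^{-c})\,|u|^r\,dx$. A final integration by parts moving the gradient off $|u|^r$ turns this into $\sigma\frac{2(r-2)}{r}\int_{\R^3}\Delta\psi\,|x|^{-c}|u|^r\,dx-\sigma\frac{4}{r}\int_{\R^3}\nabla\psi\cdot\nabla(|x|^{-c})\,|u|^r\,dx$. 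Specializing to $(\sigma,c,r)=(+1,a,p)$ and $(\sigma,c,r)=(-1,b,6-2b)$, and simplifying $\frac{2(4-2b)}{6-2b}=\frac{4-2b}{3-b}$ and $\frac{4}{6-2b}=\frac{2}{3-b}$, reproduces the four weighted terms in the statement.

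The analytic content lies not in this algebra but in legitimizing the manipulations. Because the weights are only locally integrable, the boundary terms generated by each integration by parts must be shown to vanish, which is precisely where the uniform bound $\|\nabla u\|_{L^\infty_t L^2}\lesssim 1$ and the radial Strauss decay enter, and where one passes to the limit in the regularized weights. The most delicate single piece is the double integration by parts in the linear part that generates the biharmonic term $\int_{\R^3}\Delta^2\psi\,|u|^2\,dx$, as it demands $\psi$ smooth and $u$ controlled in $H^1$ uniformly in $t$. Since the identity is exactly \cite[Lemma 4.5]{cc}, I would ultimately invoke that reference for the rigorous justification and present the computation above only as the derivation.
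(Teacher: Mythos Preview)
Your proposal is correct, and in fact goes further than the paper itself: the paper does not prove this lemma at all but simply quotes it from \cite[Lemma 4.5]{cc}, as indicated in the lemma header. Your two-step derivation (first $V_\psi'=M_\psi$, then splitting $M_\psi'$ into the standard linear virial piece and a generic nonlinear contribution $\sigma|x|^{-c}|u|^{r-2}u$) is the canonical route, the algebra and signs check out, and your closing remark that one ultimately invokes \cite{cc} for the rigorous justification is exactly what the authors do.
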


{ 
\begin{lem} \label{coer1}
Let $u\in C(\R,H^1_{rad}(\R^3))$ be the global solution to the Cauchy problem for \eqref{equ} satisfying \eqref{3.18'}. Then there holds that
\begin{align*}
\|\nabla u\|^2-\int_{\R^3}|x|^{-b}|u|^{6-2b}\,dx\gtrsim \int_{\R^3}|x|^{-b}|u|^{6-2b}\,dx.
\end{align*}
\end{lem}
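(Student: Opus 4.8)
The plan is to deduce the claimed coercivity from the sharp Sobolev inequality of Lemma \ref{sem} together with the uniform gap in \eqref{3.18'}, reducing everything to a one-variable inequality. Fix $t\in\R$ and abbreviate $A:=\|\nabla u(t)\|^2$ and $B:=\int_{\R^3}|x|^{-b}|u(t,x)|^{6-2b}\,dx$; since the bound \eqref{3.18'} holds uniformly in $t$, it suffices to argue at each time separately. Because $6-2b=2(3-b)$, the variational characterization \eqref{min2} (equivalently Lemma \ref{sem}) yields
\begin{align} \label{coer-sob}
B\leq C_*^{6-2b}\,\|\nabla u\|^{6-2b}=C_*^{6-2b}\,A^{3-b}.
\end{align}
So the estimate $A-B\gtrsim B$ will follow once I show that the ratio $A/B$ is bounded below by a constant strictly larger than $1$.

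Next I would use the identity \eqref{min'}, namely $C_*^{-1}=\|\nabla Q\|^{(2-b)/(3-b)}$, to rewrite the constant in \eqref{coer-sob}. Raising to the power $6-2b=2(3-b)$ gives $C_*^{6-2b}=\|\nabla Q\|^{-2(2-b)}$, so \eqref{coer-sob} becomes $B\leq \|\nabla Q\|^{-2(2-b)}A^{3-b}$. Introducing the normalized quantity $y:=A/\|\nabla Q\|^2$, the hypothesis \eqref{3.18'} reads $0<y\leq 1-\widetilde{\varepsilon}<1$, while the last inequality collapses to $B\leq y^{3-b}\|\nabla Q\|^2$. Since also $A=y\|\nabla Q\|^2$, these two relations give
\begin{align} \label{coer-ratio}
\frac{A}{B}\geq \frac{y\,\|\nabla Q\|^2}{y^{3-b}\,\|\nabla Q\|^2}=y^{\,b-2}.
\end{align}

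Finally, because $b-2<0$ and $0<y\leq 1-\widetilde{\varepsilon}<1$, the right-hand side of \eqref{coer-ratio} satisfies $y^{\,b-2}\geq (1-\widetilde{\varepsilon})^{\,b-2}>1$. Hence $A\geq (1-\widetilde{\varepsilon})^{\,b-2}B$, which is precisely
$$
\|\nabla u\|^2-\int_{\R^3}|x|^{-b}|u|^{6-2b}\,dx\geq \big((1-\widetilde{\varepsilon})^{\,b-2}-1\big)\int_{\R^3}|x|^{-b}|u|^{6-2b}\,dx,
$$
with a coefficient depending only on $\widetilde{\varepsilon}$ and $b$, which is the asserted $\gtrsim$. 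The argument contains no real obstacle beyond bookkeeping of the exponents; the only point requiring genuine care is that the strict gap $y\leq 1-\widetilde{\varepsilon}<1$ supplied by \eqref{3.18'} is exactly what converts the non-strict Sobolev bound \eqref{coer-sob} into a strictly positive coercivity constant. Without that uniform gap one would merely recover $A-B\geq 0$, which is insufficient for the Virial/Morawetz analysis that follows.
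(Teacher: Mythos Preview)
Your proof is correct and follows essentially the same approach as the paper's own argument: both use the sharp Sobolev bound \eqref{min2}, the identity \eqref{min'} to express $C_*^{6-2b}=\|\nabla Q\|^{-2(2-b)}$, and the uniform gap \eqref{3.18'} to produce a strictly positive coercivity constant. The paper first bounds $A-B\geq (1-(1-\widetilde\varepsilon)^{2-b})A$ and then separately derives $A\geq (1-\widetilde\varepsilon)^{-(2-b)}B$ before combining, whereas you pass directly to the ratio $A/B\geq y^{b-2}\geq (1-\widetilde\varepsilon)^{b-2}$; the resulting constant $(1-\widetilde\varepsilon)^{b-2}-1$ is identical in both arguments.
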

\begin{proof}

Taking into account \eqref{min2}, \eqref{min'} and \eqref{3.18'}, we get that
\begin{align} \label{4.17'}
\begin{split}
\|\nabla u\|^2-\int_{\R^3}|x|^{-b}|u|^{6-2b}\,dx
&\geq \left(1-C_*^{6-2b}\|\nabla u\|^{4-2b}\right) \|\nabla u\|^2  \\
& \geq \left(1-C_*^{6-2b}(1-\widetilde{\varepsilon})^{2-b}\|\nabla Q\|^{4-2b}\right)\|\nabla u\|^2 \\
&=\left(1-(1-\widetilde{\varepsilon})^{2-b}\right)\|\nabla u\|^2.
\end{split}
\end{align}
It follows that
\begin{align} \label{4.17''}
\|\nabla u\|^2 \geq \frac{1}{(1-\widetilde{\varepsilon})^{2-b}} \int_{\R^3}|x|^{-b}|u|^{6-2b}\,dx.
\end{align}
By \eqref{4.17'} and \eqref{4.17''}, we then get that
\begin{align}
\|\nabla u\|^2-\int_{\R^3}|x|^{-b}|u|^{6-2b}\,dx\geq\frac{1-(1-\widetilde{\varepsilon})^{2-b}}{(1-\widetilde{\varepsilon})^{2-b} }\int_{\R^3}|x|^{-b}|u|^{6-2b}\,dx. 
\end{align}
This ends the proof. 
\end{proof}

\begin{lem} \label{coer}
Let $u\in C(\R,H^1_{rad}(\R^3))$ be the global solution to the Cauchy problem for \eqref{equ} satisfying \eqref{3.18'}. Let $\psi : \R^3 \to \R$ be a radial smooth function such that $\psi=1$ on $B(1/4)$, $\psi=0$ on $B^c(1/2)$ and $0 \leq \psi \leq 1$. Define $\psi_R:=\psi(\cdot/R)$. Then there holds that, for $R>>1$,
\begin{align*}
\|\nabla (\psi_R u)\|^2-\int_{\R^3} |x|^{-b} |\psi_R u|^{6-2b} \,dx \gtrsim \int_{\R^3} |x|^{-b} |\psi_R u|^{6-2b} \,dx.
\end{align*}
\end{lem}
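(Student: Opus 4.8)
The plan is to show that the localized function $\psi_R u$ inherits the subcritical bound \eqref{3.18'} enjoyed by $u$, up to an error that vanishes as $R\to\infty$, and then to repeat verbatim the Sobolev argument of Lemma \ref{coer1}. Writing $\nabla(\psi_R u)=\psi_R\nabla u+u\nabla\psi_R$ and expanding the square, I would first record that, since $0\leq\psi_R\leq 1$,
$$
\|\nabla(\psi_R u)\|^2\leq \|\nabla u\|^2+2\int_{\R^3}|\nabla\psi_R|\,|u|\,|\nabla u|\,dx+\int_{\R^3}|\nabla\psi_R|^2|u|^2\,dx.
$$
The two remainder integrals are supported in the annulus $\{R/4\leq |x|\leq R/2\}$, where $|\nabla\psi_R|\lesssim R^{-1}$.

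The first key step is to control these remainder terms uniformly in time. Because $u(t)$ is radial with $\|u(t)\|$ bounded (by mass conservation) and $\|\nabla u(t)\|$ bounded (by \eqref{3.18'}), uniformly in $t$, the Strauss estimate of Lemma \ref{rlem} gives $|u(t,x)|\lesssim |x|^{-1}\lesssim R^{-1}$ on the annulus, uniformly in $t$. Combining this pointwise decay with the Cauchy--Schwarz inequality and the fact that the annulus has volume $\sim R^3$, both remainder terms are seen to be $o_R(1)$ uniformly in $t$; hence $\|\nabla(\psi_R u)\|^2\leq \|\nabla u\|^2+o_R(1)\leq (1-\widetilde{\varepsilon})\|\nabla Q\|^2+o_R(1)$. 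Choosing $R$ large enough then secures
$$
\|\nabla(\psi_R u)\|^2\leq \Big(1-\tfrac{\widetilde{\varepsilon}}{2}\Big)\|\nabla Q\|^2,
$$
uniformly in $t$.

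With this subcritical bound for $\psi_R u$ in hand, the rest is the computation of Lemma \ref{coer1} applied to $\psi_R u\in\dot H^1(\R^3)$. The Sobolev inequality of Lemma \ref{sem} yields $\int_{\R^3}|x|^{-b}|\psi_R u|^{6-2b}\,dx\leq C_*^{6-2b}\|\nabla(\psi_R u)\|^{6-2b}$, so that
$$
\|\nabla(\psi_R u)\|^2-\int_{\R^3}|x|^{-b}|\psi_R u|^{6-2b}\,dx\geq \Big(1-C_*^{6-2b}\|\nabla(\psi_R u)\|^{4-2b}\Big)\|\nabla(\psi_R u)\|^2.
$$
Using \eqref{min'}, which gives $C_*^{6-2b}\|\nabla Q\|^{4-2b}=1$ (since $6-2b=2(3-b)$), together with the bound above, the prefactor is bounded below by the positive constant $1-(1-\widetilde{\varepsilon}/2)^{2-b}$. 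Finally, invoking Sobolev once more to write $\int_{\R^3}|x|^{-b}|\psi_R u|^{6-2b}\,dx\leq \|\nabla(\psi_R u)\|^2$ converts this into the claimed lower bound in terms of $\int_{\R^3}|x|^{-b}|\psi_R u|^{6-2b}\,dx$, completing the proof.

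The main obstacle is the first step: bounding the cutoff error terms uniformly in time so that the subcritical threshold is preserved for $\psi_R u$. This is precisely where radial symmetry is indispensable, since without the pointwise decay furnished by Lemma \ref{rlem} the annular contributions of $\nabla\psi_R$ could not be shown to be negligible, and the comparison $\|\nabla(\psi_R u)\|^2\leq(1-\widetilde{\varepsilon}/2)\|\nabla Q\|^2$ would fail.
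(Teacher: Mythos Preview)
Your proof is correct and follows the same overall strategy as the paper: establish that $\|\nabla(\psi_R u)\|^2$ stays strictly below $\|\nabla Q\|^2$ uniformly in time, then rerun the Sobolev computation of Lemma \ref{coer1}.

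One remark on the execution. The paper controls the cutoff error via the integration-by-parts identity
\[
\|\nabla(\psi_R u)\|^2=\|\psi_R\nabla u\|^2-\int_{\R^3}\psi_R\,\Delta\psi_R\,|u|^2\,dx\leq \|\nabla u\|^2+CR^{-2}\|u\|^2,
\]
which uses only mass conservation and the bound $|\Delta\psi_R|\lesssim R^{-2}$. Your direct expansion also works, but note that the cross term can be handled by Cauchy--Schwarz alone: $2\int|\nabla\psi_R|\,|u|\,|\nabla u|\,dx\lesssim R^{-1}\|u\|\,\|\nabla u\|$. Thus your closing claim that radial symmetry and the Strauss decay are \emph{indispensable} for this step is not accurate; the error is $o_R(1)$ from mass and energy bounds alone, and radiality is not used in this lemma.
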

\begin{proof}
Arguing as the proof of Lemma \ref{coer1}, we only need to show that there exists $\varepsilon'>0$ such that
\begin{align} \label{cc1}
\|\nabla (\psi_R u)\|_{L^{\infty}(\R, L^2)}^2 \leq (1-\varepsilon') \|\nabla Q\|^2.
\end{align}
Observe that
\begin{align*}
\|\nabla (\psi_R u)\|^2&=\|\psi_R\nabla u\|^2-\int_{\R^3}\psi_R\Delta\psi_R|u|^2\,dx\\
& \leq \|\nabla u\|^2 + R^{-2} \|u\|^2,
\end{align*}
where we used integration by parts and the properties of the function $\psi_R$. This together with \eqref{3.18'} gives rise to the desirable conclusion \eqref{cc1} and the proof is completed.
\end{proof}
}
 {
\begin{lem}\label{vrl-mrw}
Let $u\in C(\R,H^1_{rad}(\R^3))$ be the global solution to the Cauchy problem for \eqref{equ} satisfying \eqref{3.18'}. Then there exist $R:=R(Q,\|u_0\|)>>1$ and $\gamma>0$ such that, for any $T>0$,
\begin{equation}\label{mor}
\frac 1T \int_0^T\int_{B(R/4)}|x|^{-b}|u|^{6-2b}\,dx\,dt\lesssim \frac{R}T+R^{-\gamma}.
\end{equation}
Moreover, there exist $\{t_n\}, \{R_n\} \subset \R$ satisfying $t_n, R_n \to\infty$ as $n \to \infty$ such that
\begin{align}\label{2}
\int_{B(R_n)}|x|^{-b}|u(t_n)|^{6-2b} \,dx \to 0  \quad  \mbox{as} \quad n\to\infty.
\end{align} 
\end{lem}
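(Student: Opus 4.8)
The plan is to run a truncated Virial/Morawetz argument in the spirit of Dodson--Murphy, exploiting the radial symmetry in a crucial way. Let $\psi_R$ denote the cutoff from Lemma \ref{coer}, and let $\Phi=\Phi_R:\R^3\to\R$ be the radial Morawetz weight determined by $\Phi(0)=\Phi'(0)=0$ and $\Phi''(r)=\psi_R(r)^2$; then $\Phi$ is convex, $\Phi=\tfrac12|x|^2$ on $B(R/4)$, it grows linearly for $|x|\gtrsim R$ with $\|\nabla\Phi\|_{L^\infty}=\sup_r\Phi'(r)=\int_0^\infty\psi_R(s)^2\,ds\lesssim R$, and all derivatives of order $\ge2$ beyond those of $\tfrac12|x|^2$ are supported on $\{R/4\le|x|\le R/2\}$ with the scaling bounds $|\Delta\Phi|\lesssim1$ and $|\Delta^2\Phi|\lesssim R^{-2}$. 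By mass conservation and \eqref{3.18'}, the Morawetz action \eqref{mrwtz} obeys $|M_\Phi[u(t)]|\lesssim\|\nabla\Phi\|_{L^\infty}\|u\|\,\|\nabla u\|\lesssim R$ uniformly in $t$, so $\int_0^T M_\Phi'[u]\,dt=M_\Phi[u(T)]-M_\Phi[u(0)]$ has modulus $\lesssim R$.

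The decisive point is that, since $u$ is radial, the angular derivatives vanish and the Hessian term in Lemma \ref{mrwz1} collapses: $4\Re\sum_{k,l}\partial_l\partial_k\Phi\,\partial_k u\,\partial_l\bar u=4\int\Phi''|\nabla u|^2=4\int\psi_R^2|\nabla u|^2=4\|\nabla(\psi_R u)\|^2+O(R^{-2})$, the last equality by integration by parts together with $|\psi_R\Delta\psi_R|\lesssim R^{-2}$. On $B(R/4)$ one has $\Delta\Phi=3$, $\nabla\Phi\cdot\nabla(|x|^{-a})=-a|x|^{-a}$ and $\nabla\Phi\cdot\nabla(|x|^{-b})=-b|x|^{-b}$, so the focusing part of $M_\Phi'[u]$ equals $-4\int|x|^{-b}|\psi_R u|^{6-2b}+O(R^{-\gamma})$ (its exact value $-4\int_{B(R/4)}|x|^{-b}|u|^{6-2b}$ on the quadratic region plus an $O(R^{-\gamma})$ exterior error), while the defocusing inhomogeneous part is nonnegative on $B(R/4)$, namely $\tfrac{6p-12+4a}{p}\int_{B(R/4)}|x|^{-a}|u|^p\ge0$, and $O(R^{-(p+a-3)})$ outside. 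Collecting terms gives $M_\Phi'[u]\ge4\big(\|\nabla(\psi_R u)\|^2-\int|x|^{-b}|\psi_R u|^{6-2b}\big)-CR^{-\gamma}$, and Lemma \ref{coer} turns the bracket into $\gtrsim\int|x|^{-b}|\psi_R u|^{6-2b}\ge\int_{B(R/4)}|x|^{-b}|u|^{6-2b}$.

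It thus remains to show the transition-region errors are $O(R^{-\gamma})$ uniformly in $t$, which is the technical heart of the argument and where the radial Sobolev bound of Lemma \ref{rlem} enters. Using $|u(x)|\lesssim|x|^{-1}(\|u\|\,\|\nabla u\|)^{1/2}\lesssim|x|^{-1}$ for $|x|\ge R/4$ together with $\|\nabla\Phi\|_{L^\infty}\lesssim R$, every nonlinear term supported on $\{|x|>R/4\}$ is controlled by
\[
\int_{|x|>R/4}|x|^{-b}|u|^{6-2b}\,dx\lesssim\int_{R/4}^\infty r^{-4+b}\,dr\lesssim R^{-(3-b)},\qquad\int_{|x|>R/4}|x|^{-a}|u|^{p}\,dx\lesssim R^{-(p+a-3)},
\]
both negative powers of $R$ since $b<\tfrac43$ and $p>2+\tfrac{4-2a}{3}>3-a$; combined with the $O(R^{-2})$ coming from $\Delta^2\Phi$ and from the Hessian integration by parts, we may take $\gamma:=\min\{2,\,3-b,\,p+a-3\}>0$. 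Integrating $M_\Phi'[u]\ge c\int_{B(R/4)}|x|^{-b}|u|^{6-2b}-CR^{-\gamma}$ over $[0,T]$ and invoking the bound $|\int_0^T M_\Phi'[u]\,dt|\lesssim R$ yields $\int_0^T\int_{B(R/4)}|x|^{-b}|u|^{6-2b}\,dx\,dt\lesssim R+TR^{-\gamma}$, which is \eqref{mor} after dividing by $T$. I expect the main obstacle to be exactly the bookkeeping in the second and third steps: verifying that the radial collapse of the Hessian reproduces $\|\nabla(\psi_R u)\|^2$ up to $O(R^{-2})$ and that every boundary term genuinely carries a negative power of $R$, so that Lemma \ref{coer} applies cleanly.

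Finally, \eqref{2} is extracted from \eqref{mor} by a diagonal choice. Pick $R_n\to\infty$ and then $T_n\to\infty$ with $R_n/T_n\to0$ (for instance $T_n:=R_n^{1+\gamma}$), so that the right-hand side of \eqref{mor} tends to $0$; denote by $\varepsilon_n\to0$ a bound for the resulting time average, where $g_n(t):=\int_{B(R_n/4)}|x|^{-b}|u(t)|^{6-2b}\,dx\ge0$. By Chebyshev's inequality the set $\{t\in[0,T_n]:g_n(t)>\varepsilon_n^{1/2}\}$ has measure $\le\varepsilon_n^{1/2}T_n<T_n/2$ for $n$ large, hence there exist $t_n\in[T_n/2,T_n]$, in particular $t_n\to\infty$, with $g_n(t_n)\le\varepsilon_n^{1/2}\to0$; relabelling $R_n/4$ as $R_n$ gives \eqref{2} and completes the proof.
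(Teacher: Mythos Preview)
Your argument is correct and follows the same Dodson--Murphy Virial/Morawetz strategy as the paper: bound the Morawetz action by $O(R)$, lower-bound its derivative by the truncated coercive quantity of Lemma~\ref{coer} minus power-of-$R$ errors coming from the radial Strauss inequality, and integrate in time. The differences are tactical rather than structural. The paper chooses a weight $\zeta$ that equals $\tfrac12|x|^2$ on $B(R/2)$ and $R|x|$ outside $B(R)$ with an unspecified smooth interpolation, then decomposes the Hessian term into angular and radial parts (both nonnegative) via the identity~\eqref{''}, drops the exterior contributions, and afterwards relates $\|\nabla u\|_{L^2(B(R/2))}^2$ to $\|\nabla(\psi_R u)\|^2$ by a separate integration-by-parts step. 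Your choice $\Phi''=\psi_R^2$ is neater here: since $u$ is radial the Hessian term collapses directly to $\int\psi_R^2|\nabla u|^2$, which is already $\|\nabla(\psi_R u)\|^2+O(R^{-2})$, so the connection to Lemma~\ref{coer} is immediate and the angular/radial bookkeeping is bypassed. The paper's decomposition would in principle cover non-radial $u$ at the Hessian level, but since Strauss is used anyway for the nonlinear errors this generality is not exploited. Your exterior error exponents $\min\{2,3-b,p+a-3\}$ differ slightly from the paper's $\min\{2,p-2+a\}$ (the paper splits $|u|^p=|u|^{p-2}|u|^2$ and uses mass on the last factor, whereas you apply Strauss to every factor), but both are positive under the standing hypotheses so either works. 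Finally, the paper extracts $(t_n,R_n)$ via the mean value theorem after setting $R\sim T^{1/(1+\gamma)}$, while your Chebyshev argument is an equivalent and perfectly acceptable variant.
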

}
\begin{proof}

{
For $R>>1$, we define a radial function $\zeta: \R^3 \to \R$ by
$$
\zeta(x):=\left\{
\begin{aligned}
\frac12|x|^2, &\quad\mbox{if}\quad |x|\leq R/2,\\
R|x|, &\quad\mbox{if}\quad |x|>R.
\end{aligned}
\right.
$$
In addition, we assume that in the centered annulus $\{x\in\R^3 : R/2<|x|<R\}$,}
\begin{align*} 
\partial_r\zeta > 0,\quad\partial^2_r\zeta \geq 0\quad\mbox{and}\quad |\partial^\alpha \zeta|\leq C_\alpha R|\cdot|^{1-{|\alpha|}},\quad\forall\, |\alpha| \geq 1.
\end{align*}

Observe that on the centered ball of radius $R/2$ there holds that
\begin{align} \label{r2}
\partial_j \partial_k \zeta=\delta_{jk},\quad {\Delta \zeta = 3} \quad\mbox{and}\quad \Delta^2 \zeta = 0.
\end{align}
In view of the radial identity
\begin{equation}\label{''}
\partial_j\partial_k=\left(\frac{\delta_{jk}}r-\frac{x_jx_k}{r^3}\right)\partial_r+\frac{x_jx_k}{r^2}\partial_r^2,
\end{equation}
we { are able to compute} that, for $|x| > R$, 
\begin{align*} {
\partial_j \partial_k \zeta =\frac R{|x|}\left(\delta_{jk}-\frac{x_jx_k}{|x|^2}\right), \quad
\Delta \zeta=\frac{2R}{|x|}, \quad
|\Delta^2 \zeta|\lesssim\frac R{|x|^3}.}
\end{align*}
{  Due to the Cauchy-Schwarz inequality, { the conservation of the mass, the fact that $|\nabla \zeta| \lesssim R$} and \eqref{3.18'}, we have that
\begin{align} \label{m1}
|M_\zeta[u]|=2\left|\Im\int_{\R^3} \overline{u}(\nabla\zeta\cdot\nabla u)\,dx\right|\lesssim R.
\end{align}
Furthermore, taking into account the identity \eqref{''}, we obtain that
\begin{align} \label{4.5}
\begin{split}
\Re\sum_{l,k=1}^3\int_{\R^3}\partial_l\partial_k\zeta \partial_{k}u\partial_{l}\overline{u}\,dx
&=\Re \sum_{l.k=1}^3\int_{\R^3}\left(\left(\frac{\delta_{lk}}r-\frac{x_lx_k}{r^3}\right)\partial_r\zeta +\frac{x_lx_k}{r^2}\partial_r^2\zeta \right)\partial_{k} u\partial_{l}\overline{u}\,dx\\
&=\int_{\R^3}\left(|\nabla u|^2-\frac{|x\cdot\nabla u|^2}{|x|^2}\right)\frac{\partial_r\zeta }{|x|}\,dx
+\int_{\R^3}\frac{|x\cdot\nabla u|^2}{|x|^2}\partial_r^2\zeta \,dx \\
&=\int_{\R^3}|\not\nabla u|^2\frac{\partial_r\zeta }{|x|}\,dx+\int_{\R^3}\frac{|x\cdot\nabla u|^2}{|x|^2}\partial_r^2\zeta\,dx,
\end{split}
\end{align}
where the angular gradient is defined by
$$
\not\nabla :=\nabla -\frac{x\cdot\nabla}{|x|^2}x.
$$
Now, by Lemma \ref{mrwz1}, \eqref{r2} and \eqref{4.5}, one writes that
\begin{align}
M_\zeta'[u]
&=4\left(\int_{B(R/2)} |\nabla u|^2 \,dx+\frac{3(p-2)+2a}{2p}\int_{B(R/2)}|x|^{-a}|u|^{p}\,dx-\int_{B(R/2)}|x|^{-b}|u|^{6-2b}\,dx\right)\nonumber\\
& \quad +4\int_{B^c(R/2)}|\not\nabla u|^2\frac{\partial_r\zeta }{|x|}\,dx+4\int_{B^c(R/2)}\frac{|x\cdot\nabla u|^2}{|x|^2}\partial_r^2\zeta\,dx-\int_{B^c(R/2)}\Delta^2\zeta|u|^2\,dx\nonumber\\
& \quad +\int_{B^c(R/2)}\left(\frac{4a}{p}\frac{\nabla\zeta\cdot x}{|x|^2}+\frac{2(p-2)}{p}\Delta \zeta\right) |x|^{-a}|u|^{p}\,dx\nonumber\\
&\quad -\int_{B^c(R/2)}\left(\frac{2b}{3-b}\frac{\nabla\zeta\cdot x}{|x|^2}+\frac{4-2}{3-b}\Delta \zeta\right) |x|^{-b}|u|^{6-2b}\,dx. \nonumber 
\end{align}
The Sobolev embedding inequality \eqref{rineq}, the properties of $\zeta$, the conservation of the mass and \eqref{3.18'} then imply that
\begin{align} \label{4.8}
\begin{split}
M_\zeta'[u]
&\gtrsim 4\left(\int_{B(R/2)} |\nabla u|^2 \,dx+\frac{3(p-2)+2a}{2p}\int_{B(R/2)}|x|^{-a}|u|^{p}\,dx-\int_{B(R/2)}|x|^{-b}|u|^{6-2b}\,dx\right) \\
&\quad -R^{-a}\int_{B^c(R/2)}|u|^{p}\,dx-R^{-b}\int_{B^c(R/2)}|u|^{6-2b}\,dx-R^{-2} \\
&\gtrsim\|\nabla u\|_{L^2(B(R/2))}^2-\int_{B(R/2)}|x|^{-b}|u|^{6-2b}\,dx-R^{-(p-2)-a}-R^{-(4-b)}-R^{-2}.
\end{split}
\end{align}
Taking advantage of the properties of $\psi$ and integration by parts yield that
\begin{align} \label{4.9'}
\begin{split}
\|\nabla(\psi_Ru)\|^2
&=\|\psi_R\nabla u\|^2-\int_{\R^3}\psi_R\Delta\psi_R|u|^2\,dx\\
&=\|\nabla u\|_{L^2(B(R/2))}^2-\int_{{ R/4<|x| \leq R/2}}(1-\psi_R^2)|\nabla u|^2\,dx-\int_{\R^3}\psi_R\Delta\psi_R|u|^2\,dx,
\end{split}
\end{align}
where $\psi_R$ is the radial function defined in Lemma \ref{coer}. Therefore, by the properties of $\psi_R$ and the conservation of the mass, \eqref{4.9'} readily implies that
\begin{align} \label{4.9}
\begin{split}
\|\nabla u\|_{L^2(B(R/2))}^2
&=\|\nabla(\psi_R u)\|^2+\int_{{ R/4<|x| \leq R/2}}(1-\psi_R^2)|\nabla u|^2\,dx+\int_{\R^3}\psi_R\Delta\psi_R|u|^2\,dx\\
&\gtrsim \|\nabla(\psi_Ru)\|^2-R^{-2}.
\end{split}
\end{align}
Similarly, by the Sobolev embedding inequality \eqref{rineq} and the conservation of the mass, we are able to derive that
\begin{align} \label{4.10}
\begin{split}
-\int_{B(R/2)}|x|^{-b}|u|^{6-2b}\,dx
&=-\int_{\R^3}|x|^{-b}|\psi_Ru|^{6-2b}\,dx-\int_{{ R/4<|x| \leq R/2}}(1-\psi_R^{6-2b})|x|^{-b}|u|^{6-2b}\,dx\\
&\gtrsim-\int_{\R^3}|x|^{-b}|\psi_Ru|^{6-2b}\,dx- R^{-(4-b)}.\\
\end{split}
\end{align}
}
At this point, using Lemma \ref{coer}, \eqref{4.8}, \eqref{4.9} and \eqref{4.10}, we then get that
{
\begin{align} \label{4.11}
\begin{split}
 M_{\zeta}'[u] 
&\gtrsim\|\nabla(\psi_R u)\|^2-\int_{\R^3}|x|^{-b}|\psi_Ru|^{6-2b}\,dx-R^{-(p-2
)-a}-R^{-(4-b)}-R^{-2} \\
&\gtrsim\int_{\R^3}|x|^{-b}|\psi_Ru|^{6-2b}\,dx-R^{-(p-2)-a}-R^{-2}.
\end{split}
\end{align}
Taking $\gamma:=\min \left\{2, p-2+a\right\}$, integrating \eqref{4.11} in time on $[0, T]$ and applying \eqref{m1}, we then obtain that, for $R>>1$,
\begin{align*}
\frac1T\int_0^T\int_{B(R/4)}|x|^{-b}|u|^{6-2b}\,dx\,dt 
&\leq\frac1T\int_0^T\int_{\R^3}|x|^{-b}|\psi_Ru|^{6-2b}\,dx\,dt \\
&\lesssim \frac{\|M_{\zeta}[u]\|_{L^\infty(0,T)}}T+R^{-\gamma}\\
&\lesssim \frac{R}T+R^{-\gamma}.
\end{align*}
}
Then \eqref{mor} follows immediately. Now, taking $R:=4T^\frac{1}{1+\gamma}$, one gets that
\begin{align} \label{1}
\frac 1T \int_0^T\int_{|x|<T^\frac1{1+\gamma}}|x|^{-b}|u|^{6-2b}\,dx\,dt \lesssim T^{-\frac{\gamma}{1+\gamma}}.
\end{align}
{As a consequence, the mean value theorem and \eqref{1} indicate that there exist $\{t_n\}, \{R_n\} \subset \R$ satisfying $t_n, R_n\to\infty$ as $n \to \infty$ such that
$$
\int_{B(R_n)}|x|^{-b}|u(t_n)|^{6-2b}\,dx \to 0 \,\, \mbox{as} \,\, n\to\infty.
$$
} 
Then \eqref{2} holds true and this closes the proof.
\end{proof}

\subsection{Scattering Criteria}

Now, { we prove} a scattering criteria in the spirit of \cite{tao}. { For this, we need additionally assume that $p \geq 4$.}

\begin{lem}\label{crt} {
Let $u\in C(\R,H^1_{rad}(\R^3))$ be a global solution to the Cauchy problem for \eqref{equ}}. Assume that 
\begin{equation}\label{ee} 
0<\sup_{t\geq0}\|u(t)\|_{H^1}:=E<\infty.
\end{equation}
There exist $R,\epsilon>0$ depending on $E,a,p,b$ such that if
\begin{equation}\label{crtr} 
{\liminf_{t\to+\infty}\int_{B(R)}|u(t,x)|^{6}\,dx<\epsilon^{6}},
\end{equation}
then $u$ scatters for positive time. 
\end{lem}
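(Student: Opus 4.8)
The plan is to deduce scattering from a global space--time bound of the form $\|\langle\nabla\rangle u\|_{\Lambda([T,\infty))}<\infty$ for some large $T$: once this is available, the perturbative machinery behind Lemma \ref{prt}, together with the Duhamel formula and the Strichartz estimates of Lemma \ref{str}, shows that $e^{-\textnormal{i}t\Delta}u(t)$ converges in $H^1_{rad}(\R^3)$ as $t\to+\infty$, which is scattering. Following Tao's criterion \cite{tao}, it suffices to produce a time $T$, as large as we wish, for which the free evolution of the data at time $T$ is small in the scattering norm, i.e. $\|e^{\textnormal{i}(t-T)\Delta}u(T)\|_{S([T,\infty))}\le\eta$ for a suitably small $\eta=\eta(\eps)$; a continuity/bootstrap argument based on Lemma \ref{str} and the nonlinear product estimates already used in Lemma \ref{prt} then upgrades this smallness into the desired finite global bound on $[T,\infty)$.

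To control the free evolution I would use the Duhamel representation written from the initial time. Setting $\cN(u):=|x|^{-a}|u|^{p-2}u-|x|^{-b}|u|^{4-2b}u$, one has for $t\ge T$
\[
e^{\textnormal{i}(t-T)\Delta}u(T)=e^{\textnormal{i}t\Delta}u_0-\textnormal{i}\int_0^T e^{\textnormal{i}(t-s)\Delta}\cN(u(s))\,ds.
\]
The linear term $e^{\textnormal{i}t\Delta}u_0$ has $S([T,\infty))$-norm tending to $0$ as $T\to+\infty$ by monotone convergence and Lemma \ref{str}, hence is negligible. I split the Duhamel integral at an intermediate time $T-T_0$ into a \emph{far} piece $\int_0^{T-T_0}$ and a \emph{near} piece $\int_{T-T_0}^{T}$, and fix the parameters $R$, $T_0$ and the base time $T=t_n$, with $t_n\to+\infty$ realizing the $\liminf$ in \eqref{crtr}, only at the very end.

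For the far piece every $t\ge T$ and $s\le T-T_0$ obey $t-s\ge T_0$, so I would invoke the pointwise dispersive decay $\|e^{\textnormal{i}\tau\Delta}f\|_{L^\infty}\lesssim|\tau|^{-3/2}\|f\|_{L^1}$ in $\R^3$ and estimate $\|\cN(u(s))\|_{L^1}$ by H\"older's inequality, the radial Strauss bound of Lemma \ref{rlem}, and the Caffarelli--Kohn--Nirenberg/Hardy inequality of Lemma \ref{ckn} to absorb the weights $|x|^{-a},|x|^{-b}$, together with the uniform bound $\sup_t\|u(t)\|_{H^1}\le E$ and the coercivity \eqref{3.18'}. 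The radial decay $|u(x)|\lesssim|x|^{-1}$ makes the weighted tail $\int|x|^{-a}|u|^{p-1}\,dx$ converge precisely because $p\ge4>4-a$, which is one place where the hypothesis $p\ge4$ is used; integrating the integrable decay $|t-s|^{-3/2}$ over $s\in(0,T-T_0)$ and interpolating with the energy bound then yields a far contribution $\lesssim T_0^{-\ga}$ for some $\ga>0$, small once $T_0$ is large.

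The near piece is the heart of the matter and the main obstacle. By the dual Strichartz estimate of Lemma \ref{str}, its $S([T,\infty))$-norm is dominated by $\|\cN(u)\|_{\Lambda^{-s'}([T-T_0,T])}$, which I would estimate after the spatial splitting $u=\chi_{B(R)}u+\chi_{B^c(R)}u$. On the exterior the radial decay of Lemma \ref{rlem} gives $|u(x)|\lesssim R^{-1}\|u\|^{1/2}\|\nabla u\|^{1/2}$, so the exterior contribution is small for $R\gg1$. On the ball $B(R)$ the energy-critical factor is governed by the localized $L^6$-mass, which is small at time $T=t_n$ by \eqref{crtr}; the difficulty is to use this single-time smallness throughout the window $[T-T_0,T]$, which I would do by a local mass/virial argument in the spirit of Lemmas \ref{mrwz1} and \ref{vrl-mrw}, showing that $\int_{B(R)}|u(s)|^{6}\,dx$ cannot exceed $O(\eps^6+T_0/R)$ on a window of length $T_0$. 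Balancing $\eps$, $R$ and $T_0$ and feeding the two competing weighted nonlinearities through the product estimates of Lemma \ref{prt} --- the defocusing energy-critical term as in \cite{chl} and the subcritical focusing term as in \cite{guz} --- produces the required smallness $\|e^{\textnormal{i}(t-T)\Delta}u(T)\|_{S([T,\infty))}\le\eta$. The genuinely delicate points are the simultaneous handling of the two singular weights $|x|^{-a}$ and $|x|^{-b}$, with their different homogeneities and criticalities, and the propagation of the localized $L^6$ smallness across the window; this is exactly where the restrictions $0<a<1$, $0<b<\frac43$ and $p\ge4$ are consumed.
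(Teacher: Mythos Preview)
Your skeleton is the paper's: Duhamel from time $0$, split the integral at $T-T_0$ (the paper takes $T_0=\epsilon^{-\beta}$), show the far piece is small by dispersion and the near piece is small by propagating the localized $L^6$ smallness over the window, then bootstrap. For the far piece the paper uses the $L^{10/9}\to L^{10}$ dispersive estimate directly, bounding $\|\cN(u(s))\|_{L^{10/9}}\lesssim_E 1$ by the Gagliardo--Nirenberg inequality \eqref{GN}; your $L^1\to L^\infty$ route plus interpolation is a harmless variant. For the propagation, the paper does \emph{not} use the virial identities of Lemmas \ref{mrwz1}--\ref{vrl-mrw}; it integrates the pointwise identity $\partial_t|u|^6=-6\nabla\cdot\bigl(|u|^4\Im(\bar u\nabla u)\bigr)$ against the cutoff $\psi_R$, bounds the drift by $R^{-1}\|u\|_{W^{1,6}}$, and absorbs the time-integrated error by choosing $R>\epsilon^{-(6-\beta)}$.

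The substantive gap is in how you close the interior estimate on $B(R)$. Invoking ``the product estimates of Lemma \ref{prt}'' does not work: those control $\|\cN(u)\|_{\Lambda'}$ by powers of $\|\langle\nabla\rangle u\|_{\Lambda(J_2)}$, which by \eqref{3.19} grow like powers of $|J_2|$ and carry no smallness; they never see the localized $L^6$ norm. The paper's mechanism is different and hinges on an ingredient you omit: Tao's $3$D endpoint estimate \eqref{fretao}, $\|e^{\textnormal{i}\cdot\Delta}v\|_{L^4(\R,L^\infty)}\lesssim\|\nabla v\|$, from which one first proves the a~priori bound $\|u\|_{L^4(J_2,L^\infty)}\lesssim\epsilon^{-\mu\beta}$ (Claim \eqref{claim2}). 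Only with this $L^\infty_x$-type spacetime control can one run H\"older splittings such as
\[
\|(|x|^{-1}u)^b|u|^{4-3b}\nabla u\|_{L^1(J_2,L^2(B(R/4)))}
\lesssim\|u\|_{L^4(J_2,L^\infty)}^{2(1-b)}\,\|u\|_{L^\infty(J_2,L^6(B(R/4)))}^{2-b}\,\|\nabla u\|_{L^2(J_2,L^6)}^{1+b},
\]
and the analogous subcritical one with a factor $\|u\|_{L^q(J_2,L^\infty)}^{p-4}$ (the nonnegative exponent here is exactly where $p\ge4$ is consumed). The middle factor carries the $\epsilon$-smallness from \eqref{3.37}, while the outer factors grow only like $\epsilon^{-C\beta}$ and are beaten by taking $\beta$ small. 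Without the $L^4_tL^\infty_x$ bound there is no admissible H\"older decomposition that isolates $\|u\|_{L^\infty(J_2,L^6(B(R)))}$, and your near-piece argument does not close.
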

\begin{proof}
{ To attain the desired conclusion, by the standard small data theory, it suffices to prove that}
there exist $T, \alpha>0$ such that
\begin{align}\label{lm}
\|e^{\textnormal{i}(\cdot-T)\Delta}u(T)\|_{\Lambda^1(T,\infty)}\lesssim \epsilon^\alpha.
\end{align}
{ In what follows, the aim is to establish \eqref{lm}. Let $\alpha>0$ and $0<\beta<<1$ to be fixed later. Taking into account the Strichartz estimate in Lemma \ref{str}, we know that there exist $\delta>0$ and $T>\epsilon^{-\beta}$ large enough such that 
\begin{equation}\label{fre}
\|e^{\textnormal{i}\cdot\Delta}u_0\|_{S(T,\infty)}< \epsilon^{\delta}.
\end{equation}
}
Let us now take the time slabs $J_1:=[0,T-\epsilon^{-\beta}]$ and $J_2:=[T-\epsilon^{-\beta},T]$, where $T>>1$. The integral Duhamel formula gives that{
\begin{align}\label{int}
\begin{split}
e^{\textnormal{i}(t-T)\Delta}u(T)
&=e^{\textnormal{i}(t-T)\Delta}\left(e^{\textnormal{i}T\Delta}u_0-\textnormal{i}\int_0^Te^{\textnormal{i}(T-s)\Delta}|x|^{-a}|u|^{p-2}u\,ds+\textnormal{i}\int_0^Te^{\textnormal{i}(T-s)\Delta} |x|^{-b}|u|^{4-2b}u \,ds\right)\\
&:=e^{\textnormal{i}t\Delta}u_0-\textnormal{i}\left(\int_{J_1}+\int_{J_2}\right)\left(e^{\textnormal{i}(t-s)\Delta}\left(\mathcal N_a(u)-\mathcal N_b(u)\right)\right)\,ds\\
&:=e^{\textnormal{i} t\Delta}u_0-\textnormal{i}\mathcal F_1-\textnormal{i}\mathcal F_2.
\end{split}
\end{align}
}

{First we estimate the term $\mathcal F_1$}. By the Gagliardo-Nirenberg inequality \eqref{GN}, \eqref{ee} and the dispersive estimate
\begin{align}
\|e^{\textnormal{i}\cdot\Delta}u\|_r\lesssim |\cdot|^{-3(\frac12-\frac 1r)}\|u\|_{r'},\quad \forall \,\, r\geq2,
\end{align}
{ we get that
\begin{align} \label{4.13}
\begin{split}
\|\mathcal F_1\|_{S(T,\infty)}
&\lesssim \left\|\int_{J_1}|t-s|^{-3(\frac12-\frac1{10})}\|\mathcal N_a(u)-\mathcal N_b(u)\|_{\frac{10}9}\,ds\right\|_{L^{10}(T,\infty)}\\
&\lesssim \left\|\int_{J_1}|t-s|^{-\frac{6}{5}}\left(\||x|^{-a}|u|^{p-1}\|_{\frac{10}9}+\||x|^{-b}|u|^{5-2b}\|_{\frac{10}9}\right)\,ds\right\|_{L^{10}(T,\infty)}\\
&\lesssim \left\|\int_{J_1}|t-s|^{-\frac{6}{5}}\left(E^{p-1}+E^{5-2b}\right)\,ds\right\|_{L^{10}(T,\infty)}\\
&\lesssim \left\|(\cdot-T+\varepsilon^{-\beta})^{-\frac15}\right\|_{L^{10}(T,\infty)}\\
&\lesssim \varepsilon^{\frac{\beta}{10}},
\end{split}
\end{align}
where the conditions for the applications of the Gagliardo-Nirenberg inequality \eqref{GN} are satisfied, because we assumed that $0<a<1$ and $0<b<\frac 43$.

}

{ Next we estimate the term $\mathcal F_2$. Let $\psi_R$ be the radial function defined in Lemma \ref{coer}}. By \eqref{crtr}, then there is $T>>1$ such that
\begin{align} \label{3.34}
\int_{\R^3}\psi_R(x)|u(T,x)|^{6}\,dx<\epsilon^{6}.
\end{align}
{ Moreover, utilizing the identity $\partial_t |u|^6=-6\nabla \cdot (|u|^4 \Im(\overline{u} \nabla u))$, integration by parts, the property of $\psi_R$, H\"older's inequality and \eqref{ee}, we are able to infer that
\begin{align} \label{3.35}
\left|\frac\partial{\partial t}\int_{\R^3}\psi_R|u(t)|^{6}\,dx\right| \lesssim \left| \Im \int_{\R^3} \left(\nabla\psi_R\cdot\nabla u\right)|u|^4\bar u\,dx\right| \lesssim \frac 1 R \|u\|_{L^6}^5 \|u\|_{W^{1,6}}
&\lesssim\frac1R\|u\|_{W^{1,6}}.
\end{align}

Integrating in time on $[t, T]$ for any $t\in J_2$, taking $R>\epsilon^{-(6-\beta)}$ and invoking \eqref{3.34} and \eqref{3.35}, we then get that
\begin{align*}
\int_{\R^3}\psi_R|u(t)|^{6}\,dx
&\leq \int_{\R^3}\psi_R|u(T)|^{6}\,dx+\left|\int_t^T\frac\partial{\partial t}\int_{\R^3}\psi_R|u(s)|^{6}\,dx\,ds\right|\nonumber\\
&\lesssim \epsilon^{6}+\frac{\epsilon^{-\beta}}R\nonumber\\
&\lesssim \epsilon^{6}. 
\end{align*}
It follows that
}
\begin{align} \label{3.37}
{\|\psi_Ru\|_{L^\infty(J_2,L^{6})} \lesssim \epsilon^6}.
\end{align}

Using the Strichartz estimates { in Lemma \ref{str} and the Sobolev embedding inequality in $H^1(\R^3)$, we obtain that}
\begin{align} \label{4.180}
\begin{split}
\|\mathcal F_2\|_{S(T,\infty)}
&\lesssim \||x|^{-a}|u|^{p-2}\nabla u\|_{L^1(J_2,L^2)}+\||x|^{-a-1}|u|^{p-2} u\|_{L^1(J_2,L^2)}\\
& \quad +\||x|^{-b}|u|^{4-2b}\nabla u\|_{L^1(J_2,L^2)}+\||x|^{-b-1}|u|^{4-2b}u\|_{L^1(J_2,L^2)}\\
&:=(I)+(II).
\end{split}
\end{align}
{ In the following, we are going to estimate the terms $(I)$ and $(II)$.} Now, we claim that 
\begin{align}\label{claim2}
\|u\|_{L^4(J_2,L^\infty)}&\lesssim {\epsilon^{-\mu\beta}},\quad\mu>0.
\end{align}
We will defer the proof of \eqref{claim2} for now and proceed with the current proof. The term $(II)$ can be estimated as follows, { which is divided into two parts. As an application of H\"older's inequality and Hardy's inequality, we have that}
\begin{align} \label{4.27}
\begin{split}
(II)_1 &:=\|(|x|^{-1}u)^b|u|^{4-3b}\nabla u\|_{L^1(J_2,L^2(B({R/4})))}+\|(|x|^{-1}u)^{1+b}|u|^{4-3b}\|_{L^1(J_2,L^2(B(({R/4})))} \\
&\leq {\||x|^{-1}u\|_{L^2(J_2,L^{6})}^b}\|u\|_{L^4(J_2,L^\infty)}^{2(1-b)}\|u\|_{L^\infty(J_2,L^{6}(B({R/4})))}^{2-b}\|\nabla u\|_{L^2(J_2,L^{6})}\\
&\quad +{\||x|^{-1}u\|_{L^2(J_2,L^{6})}^{1+b}}\|u\|_{L^4(J_2,L^\infty)}^{2(1-b)}\|u\|_{L^\infty(J_2,L^{6}(B({R/4})))}^{2-b}\\
&\lesssim\|u\|_{L^4(J_2,L^\infty)}^{2(1-b)}\|u\|_{L^\infty(J_2,L^{6}(B({R/4})))}^{2-b}\|\nabla u\|_{L^2(J_2,L^{6})}^{1+b}.
\end{split}
\end{align}

Thus, { for suitable choice of $0<\beta<<1$, \eqref{3.19}, \eqref{3.37}, \eqref{claim2} and \eqref{4.27} imply that there exists $\nu>0$ such that} 
\begin{align} \label{004.30}
(II)_1 &\lesssim { \epsilon^{-2(1-b)\mu\beta + 6(2-b)}\left(1+\eps^{-\frac{\beta}{2}}\right)^{1+b}} \leq \epsilon^{\nu} .
\end{align}
On the other hand, by \eqref{ee}, \eqref{claim2}, the Sobolev embedding inequality { in $H^1(\R^3)$} and H\"older's inequality, we obtain that
\begin{align}  \label{004.27}
\begin{split}
(II)_2&:=\||x|^{-b}|u|^{4-2b}\nabla u\|_{L^1(J_2,L^2(B^c({R/4})))}+\||x|^{-b-1}|u|^{4-2b}u\|_{L^1(J_2,L^2(B^c({R/4})))} \\
&\leq R^{-b}\|u\|_{L^4({ J_2}, L^\infty)}^2\|u\|_{L^\infty({ J_2}, L^{ \sigma})}^{2(1-b)}\|\left\langle \nabla\right\rangle u\|_{L^2(J_2,L^{ \sigma})}\\
&\lesssim R^{-b} { \epsilon^{-2\mu \beta}}\|u\|_{L^\infty(H^1)}^{2(1-b)}\|\left\langle \nabla\right\rangle u\|_{L^2(J_2,L^{ \sigma})} \\
&\lesssim R^{-b} { \epsilon^{-2\mu \beta}}\|\left\langle \nabla\right\rangle u\|_{L^\infty(L^2)}^\lambda\|\left\langle \nabla\right\rangle u\|_{L^{2(1-\lambda)}(J_2,L^6)}^{1-\lambda} \\
&\lesssim R^{-b} { \epsilon^{-2\mu \beta}}\|\left\langle \nabla\right\rangle u\|_{L^2(J_2,L^6)}^{1-\lambda} |J_2|^{\frac{\lambda}{2}} \\
&=R^{-b} { \epsilon^{-\left(2\mu+\frac{\lambda}{2}\right)\beta}\|\left\langle \nabla\right\rangle u\|_{L^2(J_2,L^6)}^{1-\lambda}},
\end{split}
\end{align}
{ where
$$
2 \leq \sigma=2(3-2b)<6, \quad 0<\lambda:=\frac{b}{3-2b}<1, \quad \frac{1}{\sigma}=\frac{\lambda}{2}+ \frac{1-\lambda}{6}.
$$ 
}
Therefore, for suitable choice of $R>>1$ and { $0<\beta<<1$}, from \eqref{004.27}, one similarly gets that 
\begin{align} \label{4.301}
(II)_2 \lesssim\epsilon^{\nu} .
\end{align}
{ Consequently, by \eqref{004.30} and \eqref{4.301}, we arrive at
\begin{align} \label{4.30}
(II) \lesssim\epsilon^{\nu}.
\end{align}
}
Now, we estimate $(I)$ as follows, { which is also separated into two parts. In light of H\"older's inequality and Hardy's inequality, we have that}
\begin{align}\label{04.27}
\begin{split}
(I)_1&:=\|(|x|^{-1}u)^a|u|^{p-2-a}\nabla u\|_{L^1(J_2, L^2(B({R/4})))}+\|(|x|^{-1}u)^{1+a}|u|^{p-2-a}\|_{L^1(J_2, L^2(B({R/4})))} \\
&\leq\||x|^{-1}u\|_{L^2(J_2,L^{6})}^a\|u\|_{L^q(J_2,L^\infty)}^{p-4}\|u\|_{L^\infty(J_2,L^{6}(B({R/4})))}^{2-a}\|\nabla u\|_{L^2(J_2, L^{6})}\\
&\quad +\||x|^{-1}u\|_{L^2(J_2,L^{6})}^{1+a}\|u\|_{L^q(J_2,L^\infty)}^{p-4}\|u\|_{L^\infty(J_2,L^{6}(B({R/4})))}^{2-a}\\
&\lesssim\|u\|_{L^q(J_2,L^\infty)}^{p-4}\|u\|_{L^\infty(J_2,L^{6}(B({R/4})))}^{2-a}\|\nabla u\|_{L^2(J_2,L^{6})}^{1+a} \\
& {\leq |J_2|^\frac{(4-q)(p-4)}{4q}\|u\|_{L^4(J_2,L^\infty)}^{p-4}\|u\|_{L^\infty(J_2,L^{6}(B(R/4)))}^{2-a}\|\nabla u\|_{L^2(J_2,L^{6})}^{1+a}} \\
&={ \epsilon^{-\frac{\beta (4-q)(p-4)}{4q}}\|u\|_{L^4(J_2,L^\infty)}^{p-4}\|u\|_{L^\infty(J_2,L^{6}(B(R/4)))}^{2-a}\|\nabla u\|_{L^2(J_2,L^{6})}^{1+a}},
\end{split}  
\end{align}
{ where $0\leq q=\frac{2(p-4)}{1-a}<4$. }

{ Furthermore, arguing as the proof of \eqref{004.27}, we are able to derive that
\begin{align} \label{est1}
\begin{split}
(I)_2&:=\||x|^{-a}|u|^{p-2}\nabla u\|_{L^1(J_2, L^2(B^c(R/4)))}+\||x|^{-a-1} |u|^{p-2} u\|_{L^1(J_2, L^2(B^c(R/4)))} \\
& \leq  R^{-a}\|u\|_{L^4(J_2, L^\infty)}^2\|u\|_{L^\infty(J_2, L^{2(p-3)})}^{p-4}\|\left\langle \nabla\right\rangle u\|_{L^2(J_2,L^{2(p-3)})} \\
&\lesssim R^{-a} { \epsilon^{-2\mu \beta}}\|u\|_{L^\infty(H^1)}^{p-4}\|\left\langle \nabla\right\rangle u\|_{L^2(J_2,L^{2(p-3)})} \\
&\lesssim R^{-a} { \epsilon^{-2\mu \beta}}\|\left\langle \nabla\right\rangle u\|_{L^\infty(L^2)}^{\theta}\|\left\langle \nabla\right\rangle u\|_{L^{2(1-\theta)}(J_2,L^6)}^{1-\theta} \\
&\lesssim R^{-a} { \epsilon^{-2\mu \beta}}\|\left\langle \nabla\right\rangle u\|_{L^2(J_2,L^6)}^{1-\theta} |J_2|^{\frac{\theta}{2}} \\
&=R^{-a}\epsilon^{-\left(2\mu+\frac{\theta}{2}\right)\beta}\|\left\langle \nabla\right\rangle u\|_{L^2(J_2,L^6)}^{1-\theta},
\end{split}
\end{align}
where 
$$
0<\theta:=\frac{6-p}{2(p-3)}<1, \quad \frac{1}{2(p-3)}=\frac{\theta}{2}+\frac{1-\theta}{6}.
$$
Therefore, for suitable choice of $R>>1$ and $0<\beta<<1$, by \eqref{04.27} and \eqref{est1}, one analogously has that 
}
\begin{align} \label{14.27}
(I)&\lesssim\epsilon^{\nu}.
\end{align}
%
Hence, combining \eqref{4.180}, \eqref{4.30} and \eqref{14.27}, one gets that 
\begin{align} \label{4.31}
\|\mathcal F_2\|_{S(T,\infty)} &\lesssim\epsilon^{\nu}
\end{align}

At this stage, making use of \eqref{fre}, \eqref{4.13} and \eqref{4.31}, we then conclude that \eqref{lm} holds true.

Lastly, we move on to the proof of Claim \eqref{claim2}. Utilizing the integral Duhamel formula, { the Strichatz estimates in Lemma \ref{str}}, \eqref{fretao} and \eqref{ee}, we obtain { that}
\begin{align} \label{4.21}
\begin{split}
\|u\|_{L^4(J_2,L^\infty)}
&\lesssim E+\||x|^{-a}|u|^{p-2}\nabla u\|_{L^1(J_2,L^2)}+\||x|^{-a-1}|u|^{p-1}\|_{L^1(J_2,L^2)}\\
& \quad +\||x|^{-b}|u|^{4-2b}\nabla u\|_{L^1(J_2,L^2)}+\||x|^{-b-1}|u|^{5-2b}\|_{L^1(J_2,L^2)} \\
&:=(III)+(IV).
\end{split}
\end{align}
{ Next we are going to estimate the term $(III)$ and $(IV)$. Thanks to H\"older's inequality and Hardy's inequality, we infer that
\begin{align*} 
&\|(|x|^{-1}u)^b|u|^{4-3b}\nabla u\|+\|(|x|^{-1}u)^{1+b}|u|^{4-3b}\| \\
&\leq\||x|^{-1}u\|_{6}^b\|u\|_{r}^{4-3b}\|\nabla u\|_{6}+\||x|^{-1}u\|_{6}^{1+b}\|u\|_{r}^{4-3b} \\
&\lesssim\|u\|_{r}^{4-3b}\|\nabla u\|_{6}^{1+b}, 
\end{align*}
where 
$$
r:=\frac{6(4-3b)}{2-b}>0.
$$
}{ Furthermore, by the Sobolev embedding inequality, H\"older's inequality and \eqref{ee}, we know that
\begin{align*} 
\|u\|_{r}^{4-3b}\|\nabla u\|_{6}^{1+b} 
&\lesssim\|\nabla u\|_{\widetilde{r}}^{4-3b}\|\nabla u\|_{6}^{1+b} \\
&\lesssim\left(\|\nabla u\|^{\tau}\|\nabla u\|_{6}^{1-\tau}\right)^{4-3b}\|\nabla u\|_{6}^{1+b}\\
&\lesssim\|\nabla u\|_{6}^{2},
\end{align*}
where
$$
\widetilde{r}:=\frac{6(4-3b)}{10-7b}>0, \quad 0<\tau:=\frac{3-2b}{4-3b}<1, \quad  \frac{1}{\widetilde{r}}=\frac{\tau}{2}+\frac{1-\tau}{6}.
$$

It then follows from \eqref{3.19} that
\begin{align} \label{4.24}
(IV) \lesssim \left(1+ \eps^{-\frac{\beta}{2}}\right)^2.
\end{align}
Similarly, employing H\"older's inequality and Hardy's inequality, one can write that
\begin{align*} 
&\|(|x|^{-1}u)^a|u|^{p-2-a}\nabla u\|+\|(|x|^{-1}u)^{1+a}|u|^{p-2-a}\|\\
&\leq\||x|^{-1}u\|_{6}^a\|u\|_{\rho}^{p-2-a}\|\nabla u\|_{6}+\||x|^{-1}u\|_{6}^{1+a}\|u\|_{\rho}^{p-2-a} \\
&\lesssim\|u\|_{\rho}^{p-2-a}\|\nabla u\|_{6}^{1+a},
\end{align*}
where
$$
\rho:=\frac{6(p-2-a)}{2-a}>0,
$$
Applying the Sobolev embedding inequality, H\"older's inequality and \eqref{ee}, one then gets that
\begin{align*}  
\|u\|_{\rho}^{p-2-a}\|\nabla u\|_{6}^{1+a} 
&\lesssim\|\nabla u\|_{\widetilde{\rho}}^{p-2-a}\|\nabla u\|_{6}^{1+a}\\
&\lesssim\left(\|\nabla u\|^{\kappa}\|\nabla u\|_{6}^{1-\kappa}\right)^{p-2-a}\|\nabla u\|_{6}^{1+a}\\
&\lesssim\|\nabla u\|_{6}^{\gamma},
\end{align*}
where $1<\gamma:=\frac{p-4}{2}+1+a<2$ and
$$
\widetilde{\rho}:=\frac{6(p-2-a)}{2(p-2)+2-3a}, \quad 0<\kappa:=\frac{p-2a}{2(p-2-a)}<1, \quad \frac{1}{\widetilde{\rho}}=\frac{\kappa}{2}+\frac{1-\kappa}{6}.
$$
Thereby, using \eqref{3.19} and H\"older's inequality, we derive that
\begin{align} \label{a4.24}
(III) \lesssim\|\nabla u\|_{L^2(J_2, L^6)}^{\gamma}|J_2|^\frac{2-\gamma}{2} \lesssim \left(1+\eps^{-\frac{\beta}{2}}\right)^{\gamma} \epsilon^{-\frac{\beta(2-\gamma)}{2}}.
\end{align}
As a consequence, from \eqref{4.24} and \eqref{4.24}, we conclude that there exists $\mu>0$ such that
\begin{align*} 
\|u\|_{L^4(J_2,L^\infty)}&\lesssim \epsilon^{-\mu\beta}.
\end{align*}
}
The assertion \eqref{claim2} is now fully established and the proof is completed.
\end{proof}

\begin{proof} [Proof of Theorem \ref{scattering}]
{  In light of Lemma \ref{k} and Lemma \ref{crcv}, then the global existence of solutions follows necessarily. As a result of \eqref{2} in Lemma \ref{vrl-mrw}, we find that, for $R_n>R$,
$$
\int_{B(R)}  |u(t_n, x)|^{6-2b}\,dx \leq R^b \int_{B(R_n)} |x|^{-b} |u(t_n, x)|^{6-2b}\,dx=o_n(1).
$$ 
This together with Lemma \ref{k} and Lemma \ref{crcv} 
infers that the scattering conditions outlined in Lemma \ref{crt} hold true.} Then the proof is completed.
\end{proof}

\section{Blow-up}
\label{S6}
In this section, we will discuss the blow-up of solutions to the Cauchy problem for \eqref{equ} and provide the proof for Theorems \ref{nonexistence} and \ref{blowup}. Let
$$
K^c(u)=\int_{\R^3} |\nabla u|^2 \,dx -\int_{\R^3} |x|^{-b}|u|^{6-2b} \,dx,
$$
\begin{align*}
I(u)=\frac{3(p-2)-2(2-a)}{6(p-2)+4a}\int_{\R^3} |\nabla u|^2 \,dx + \frac{2(6-2b)-3(p-2)-2a}{(3(p-2)+2a)(6-2b)} \int_{\R^3} |x|^{-b}|u|^{6-2b} \,dx.
\end{align*}

\begin{lem} \label{clem2}
{ Let $0<b<a<2$ and $2+\frac{4-2a}{3}<p < 6-2a$}. Then there holds that
\begin{align*}
m&=\inf \left\{ I(u) : u \in H^1(\R^3) \backslash \{0\}, K^c(u) < 0\right\} \\
&=\inf \left\{ I(u) : u \in H^1(\R^3) \backslash \{0\}, K^c(u) \leq 0\right\}.
\end{align*}
\end{lem}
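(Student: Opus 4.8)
The plan is to run the argument of Lemma~\ref{clem1} with $K$ replaced by $K^c$, transporting the already-established identity $m=\inf\{I(u):K(u)<0\}$ across two different scalings. The algebraic backbone is the pointwise relation
\[
K(u)=K^c(u)+\frac{3(p-2)+2a}{2p}\int_{\R^3}|x|^{-a}|u|^{p}\,dx,
\]
whose last term is strictly positive for $u\neq0$, so that $K^c(u)<K(u)$. In particular $K(u)\le0$ forces $K^c(u)<0$, yielding the inclusions $\{K\le0\}\subset\{K^c<0\}\subset\{K^c\le0\}$ among the admissible constraint sets. Taking the infimum of $I$ over progressively larger sets and invoking Lemma~\ref{clem1} then gives the easy half,
\[
\inf\{I(u):K^c(u)\le0\}\le\inf\{I(u):K^c(u)<0\}\le\inf\{I(u):K(u)\le0\}=m.
\]
It remains to prove the reverse bound $m\le\inf\{I(u):K^c(u)\le0\}$, i.e. that $I(u)\ge m$ for every $u\in H^1(\R^3)\setminus\{0\}$ with $K^c(u)\le0$.

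The first step toward this bound is to exploit the scaling $u_\nu:=\nu u(\nu^2\cdot)$, which leaves $\|\nabla u\|$ and $\int_{\R^3}|x|^{-b}|u|^{6-2b}\,dx$ invariant, hence $I(u_\nu)=I(u)$ and $K^c(u_\nu)=K^c(u)$, while rescaling the sub-critical term by $\nu^{p+2a-6}$. Since $p<6-2a$, one has
\[
K(u_\nu)=K^c(u)+\frac{3(p-2)+2a}{2p}\,\nu^{p+2a-6}\int_{\R^3}|x|^{-a}|u|^{p}\,dx\longrightarrow K^c(u)\quad\text{as }\nu\to\infty.
\]
Thus, whenever $K^c(u)<0$ strictly, we may choose $\nu$ large enough that $K(u_\nu)<0$, and Lemma~\ref{clem1} then delivers $I(u)=I(u_\nu)\ge m$.

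The second step handles the borderline configuration $K^c(u)=0$ (and simultaneously re-covers $K^c(u)<0$) through the scaling $u_\lambda:=\lambda^{3/2}u(\lambda\cdot)$, under which $K^c(u_\lambda)=\lambda^2\|\nabla u\|^2-\lambda^{6-2b}\int_{\R^3}|x|^{-b}|u|^{6-2b}\,dx$. As $b<2$ we have $\lambda^{6-2b}>\lambda^2$ for $\lambda>1$, and the hypothesis $K^c(u)\le0$ reads $\|\nabla u\|^2\le\int_{\R^3}|x|^{-b}|u|^{6-2b}\,dx$; together these force $K^c(u_\lambda)<0$ for every $\lambda>1$. The first step then applies to $u_\lambda$ and gives $I(u_\lambda)\ge m$ for all $\lambda>1$. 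Letting $\lambda\to1^+$ and using that $\lambda\mapsto I(u_\lambda)$ is a continuous (indeed polynomial) function of $\lambda$ with $I(u_\lambda)\to I(u)$, we conclude $I(u)\ge m$, which closes the chain of inequalities and establishes both equalities.

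The main obstacle is precisely the boundary case $K^c(u)=0$: there the $\nu$-scaling only produces competitors with $K(u_\nu)\downarrow0^+$, so it cannot by itself supply an admissible function for the $K<0$ characterization of Lemma~\ref{clem1}. The limiting argument along the $\lambda$-family is what circumvents this degeneracy; it relies only on the finiteness and continuity of the two coefficients of $I$, which are in fact both positive under the standing assumptions $2+\frac{4-2a}{3}<p<6-2a$ and $b<a$.
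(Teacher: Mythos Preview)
Your proof is correct and follows essentially the same approach as the paper's: the same two scalings (your $u_\nu=\nu u(\nu^2\cdot)$ is the paper's $u^\lambda=\lambda^{1/2}u(\lambda\cdot)$ reparametrized by $\lambda=\nu^2$, and your $u_\lambda=\lambda^{3/2}u(\lambda\cdot)$ is identical) and the same interplay with Lemma~\ref{clem1}. The only cosmetic difference is organizational---you prove the full chain $\inf_{K^c\le0}I\le\inf_{K^c<0}I\le m$ first and then close both inequalities simultaneously via the limiting argument, whereas the paper first establishes $m=\inf_{K^c<0}I$ and then separately shows the strict and non-strict infima agree.
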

\begin{proof}
Since $K^c(u) \leq K(u)$ for any $u \in H^1(\R^3)$, by Lemma \ref{clem1}, then
\begin{align} \label{c111-11}
\begin{split}
m&=\inf \left\{ I(u) : u \in H^1(\R^3) \backslash \{0\}, K(u) < 0\right\} \\
& \geq \inf \left\{ I(u) : u \in H^1(\R^3) \backslash \{0\}, K^c(u) < 0\right\}.
\end{split}
\end{align}
Let $u \in H^1(\R^3)$ be such that $K^c(u)<0$. Define 
$$
u^{\lambda}(x)=\lambda^{\frac 12} u(\lambda x), \quad x \in \R^3.
$$
It is not hard to calculate that $K^c(u^{\lambda})=K^c(u)$, ${ I(u^{\lambda})=I(u)}$ and
$$
K(u^{\lambda})=\int_{\R^3} |\nabla u|^2 \,dx -\int_{\R^3} |x|^{-b}|u|^{6-2b}\,dx +\lambda^{\frac p 2 -3 +a} \frac{3(p-2) +a}{2p} \int_{\R^3}|x|^{-a} |u|^{p} \,dx.
$$
Then we are able to infer that $K(u^{\lambda}) \to K^c(u)$ as $\lambda \to +\infty$, because of $p<6-2a$. Therefore, there holds that
\begin{align} \label{c12}
\begin{split}
m&=\inf \left\{ I(u) : u \in H^1(\R^3) \backslash \{0\}, K(u) < 0\right\} \\
&  \leq \inf \left\{ I(u) : u \in H^1(\R^3) \backslash \{0\}, K^c(u) < 0\right\}.
\end{split}
\end{align}
Consequently, combining \eqref{c111-11} and \eqref{c12} results in
\begin{align} \label{c11-1}
\begin{split}
m&=\inf \left\{ I(u) : u \in H^1(\R^3) \backslash \{0\}, K(u) < 0\right\} \\
&  = \inf \left\{ I(u) : u \in H^1(\R^3) \backslash \{0\}, K^c(u) < 0\right\}.
\end{split}
\end{align}
Apparently, there holds that
\begin{align} \label{c13}
\begin{split}
&\inf \left\{ I(u) : u \in H^1(\R^3) \backslash \{0\}, K^c(u) < 0\right\} \\
& \geq \inf \left\{ I(u) : u \in H^1(\R^3) \backslash \{0\}, K^c(u) \leq 0\right\}.
\end{split}
\end{align}
Let $u \in H^1(\R^3)$ be such that $K^c(u) \leq 0$. { Recall that 
$$
u_{\lambda}(x)=\lambda^{\frac 32} u(\lambda x), \quad x \in \R^3.
$$
} 
Observe that 
$$
K^c(u_{\lambda})=\lambda^2\int_{\R^3} |\nabla u|^2 \,dx -\lambda^{6-2b}\int_{\R^3}|x|^{-b}|u|^{6-2b} \,dx,
$$
$$
I(u_{\lambda})=\frac{3(p-2)-2(2-a)}{6(p-2)+4a} \lambda^2 \int_{\R^3} |\nabla u|^2 \,dx + \frac{2(6-2b)-3(p-2)-2a}{(3(p-2)+2a)(6-2b)}\lambda^{6-2b}\int_{\R^3} |x|^{-b}|u|^{6-2b} \,dx.
$$
It then indicates that $K^c(u_{\lambda})<0$ for any $\lambda>1$, { due to $b<2$. In addition, one has that $I(u_{\lambda}) \to I(u)$ as $\lambda \to 1^+$}.  As a result, there holds that
\begin{align*}
&\inf \left\{ I(u) : u \in H^1(\R^3) \backslash \{0\}, K^c(u) < 0\right\} \\
& \leq \inf \left\{ I(u) : u \in H^1(\R^3) \backslash \{0\}, K^c(u) \leq 0\right\}.
\end{align*}
This together with \eqref{c13} leads to 
\begin{align*}
&\inf \left\{ I(u) : u \in H^1(\R^3) \backslash \{0\}, K^c(u) < 0\right\} \\
& = \inf \left\{ I(u) : u \in H^1(\R^3) \backslash \{0\}, K^c(u) \leq 0\right\}.
\end{align*}
Going back to \eqref{c11-1}, we then obtain the desired conclusion and the proof is completed.
\end{proof}

\begin{proof}[Proof of Theorem \ref{nonexistence}] 
It follows from Theorem \ref{existence} that there exists no minimizers to \eqref{minn}, { because any minimizer to \eqref{minn} is a solution to \eqref{equ1} for $\omega=0$.} Let us now prove that \eqref{cm} holds true. Using Lemma \ref{clem2}, we have that
\begin{align} \label{c111}
\begin{split}
m&=\inf \left\{I(u) : u \in H^1(\R^3) \backslash \{0\}, K^c(u) \leq 0\right\} \\
& \geq \inf \left\{I(u) + N(u) : u \in H^1(\R^3) \backslash \{0\}, K^c(u) \leq 0\right\},
\end{split}
\end{align}
where
\begin{align*}
N(u)&:=\frac{2(6-2b)-3(p-2)-2a}{(3(p-2)+2a)(6-2b)} \left(\int_{\R^3} |\nabla u|^2 \,dx - \int_{\R^3} |x|^{-b}|u|^{6-2b} \,dx \right)\\
&=\frac{2(6-2b)-3(p-2)-2a}{(3(p-2)+2a)(6-2b)} K^c(u).
\end{align*}
Clearly, the equality holds in \eqref{c111} if and only if $N(u)=0$, i.e.
$$
\int_{\R^3} |\nabla u|^2 \,dx =\int_{\R^3} |x|^{-b}|u|^{6-2b} \,dx.
$$
Observe that
\begin{align}  \label{bu2} \nonumber
&\inf \left\{I(u) + N(u) : u \in H^1(\R^3) \backslash \{0\}, K^c(u) \leq 0\right\} \\ \nonumber
& = \frac{3(p-2)(2-b)+2(2-b)a}{(6(p-2)+4a)(3-b)}\inf \left\{ \|\nabla u\|^2 : u \in H^1(\R^3) \backslash \{0\}, K^c(u) \leq 0\right\} \\ \nonumber
&=\frac{3(p-2)(2-b)+2(2-b)a}{(6(p-2)+4a)(3-b)} \inf \left\{ \|\nabla u\|^2 \left(\frac{\|\nabla u\|^2}{\int_{\R^3} |x|^{-b}|u|^{6-2b} \,dx}\right)^{1/(2-b)}: u \in H^1(\R^3) \backslash \{0\}\right\} \\ \nonumber
&=\frac{3(p-2)(2-b)+2(2-b)a}{(6(p-2)+4a)(3-b)}\inf \left\{\left(\frac{\|\nabla u\|^{6-2b}}{\int_{\R^3} |x|^{-b}|u|^{6-2b} \,dx}\right)^{1/(2-b)}: u \in \dot{H}^1(\R^3) \backslash \{0\}\right\} \\ \nonumber
&=\frac{3(p-2)(2-b)+2(2-b)a}{(6(p-2)+4a)(3-b)} \frac{1} {{C_*}^{(6-2b)/(2-b)}} \\ 
&=E^c(Q),
\end{align}
where $C_*>0$ is the constant given by \eqref{min2}. This completes the proof.
\end{proof}

\begin{proof}[Proof of Theorem \ref{blowup}]
Let $u \in C([0, T_{max}), H^1(\R^3))$ be the solution to the Cauchy problem for \eqref{equ} with initial datum $u_0 \in \mathcal{K}^-$. First we verify that $u(t) \in \mathcal{K}^-$ for any $t \in [0, T_{max})$. Suppose that there exists $t_0 \in (0, T_{max})$ such that $u(t_0) \not\in \mathcal{K}^-$. By the conservation of laws, we know that $E(u(t))=E(u_0)<m$ for any $t \in [0, T_{max})$. Then there holds that $K(u(t_0)) \geq 0$. Since $K(u_0)<0$, then there exists $t_1 \in (0, t_0]$ such that $K(u(t_1))=0$, which indicates that $m\leq E(u(t_1))$. This is impossible by the conservation of the energy. Hence the desirable result follows. 

Let $K(u)<0$, by Lemma \ref{maximum}, then there exists $0<\lambda_u<1$ such that $K(u_{\lambda_u})=0$ and $E(u_{\lambda_u}) \geq m$. In addition, the function $\lambda \mapsto E(u_{\lambda})$ is concave on $[\lambda_u, +\infty)$. It then follows that
$$
E(u)-E(u_{\lambda_u})=\frac{d}{d\lambda} E(u_{\lambda})\mid_{\lambda=\xi} (1-\lambda_u) \geq K(u)(1-\lambda_u) \geq K(u), \quad \xi \in [\lambda_u, 1].
$$
By the conservation of the energy and the fact that $E(u_{\lambda_u}) \geq m$, then 
$$
K(u(t)) \leq E(u_0)-m<0, \quad \forall \, t \in [0, T_{max}).
$$

To establish blow-up of the solution, we need to introduce the following localized virial quantity defined by
$$
V_R(t):=\int_{\R^3} \psi_R (x) |u(t,x)|^2 \,dx, \quad \psi_R(x):=R^2 \psi \left(\frac{|x|}{R}\right),
$$
where $\psi \in C_0^{\infty}(\R^3)$ is a radial function such that $\psi(r)=r^2$ for $0 \leq r \leq 1$, $\psi(r)=0$ for $r \geq 3$, $\partial_r \psi(r) \leq 2r$ and $\partial_r^2 \psi(r) \leq 2$ for $r \geq 0$. It is easy to notice that $\Delta \psi_R(r)=6$ for $0 \leq r \leq R$ and $\Delta^2 \psi_R(r) =0$ for $0 \leq r \leq R$. In the spirit of Lemma \ref{mrwz1}, then there holds that
\begin{align*}
V''_R(t)&=4 \textnormal{Re} \sum_{j, k=1}^{3} \int_{\R^3} \partial_{r}^2 \psi_R |\nabla u(t)|^2 \,dx -\int_{\R^3} \Delta^2 \psi_R |u(t)|^2 \,dx \\
& \quad  +\frac{2(p-2)}{p} \int_{\R^3} |x|^{-a}|u(t)|^{p} \Delta \psi_R \,dx-\frac{4}{p} \int_{\R^3} \nabla \left(|x|^{-a}\right) \cdot \nabla \psi_R |u(t)|^{p} \,dx\\
& \quad -\frac{4-2b}{3-b} \int_{\R^3} |x|^{-b}|u(t)|^{6-2b} \Delta \psi_R \,dx +\frac{2}{3-b} \int_{\R^3} \nabla \left(|x|^{-b}\right) \cdot \nabla \psi_R |u(t)|^{6-2b} \,dx.
\end{align*}
Further, we can compute that
\begin{align*}
V''_R(t)& \lesssim 8 K(u(t)) + R^{-2}\int_{R \leq |x| \leq 3R} |u(t)|^2 \,dx \\
& \quad + R^{-a} \int_{R \leq |x| \leq 3R} |u(t)|^{p} \,dx +R^{-b} \int_{R \leq |x| \leq 3R} |u(t)|^{6-2b} \,dx.
\end{align*}
Applying Lemma \ref{rlem} and the conservation of the mass, one gets that
$$
R^{-a} \int_{R \leq |x| \leq 3R} |u(t)|^{p} \,dx \lesssim R^{-(p-2)-a} \|\nabla u(t)\|^{\frac{p-2}{2}},
$$
$$
R^{-b} \int_{R \leq |x| \leq 3R} |u(t)|^{6-2b} \,dx \lesssim R^{-(p-4)} \|\nabla u(t)\|^{2-b}.
$$
Using Young's inequality, then, for any $\eps>0$ small, then there exists $R>1$ large enough such that
\begin{align} \label{bu}
\begin{split}
V''_R(t) & \lesssim 8 K(u(t)) + \eps \|\nabla u(t)\|^2 + \eps \\
&= 16(3-b) E(u(t))- (16-8b-\eps) \|\nabla u(t)\|^2 \\
& \quad -\frac{16(3-b)-12(p-2)-8a}{p} \int_{\R^3} |x|^{-a}|u(t)|^p \,dx + \eps \\
& \leq 16(3-b) E(u(t))- (16-8b-\eps) \|\nabla u(t)\|^2 +\eps,
\end{split}
\end{align}
where we used fact that $16(3-b)-12(p-2)-8a>0$ is due to $b<a$ and $p<6-2a$.
Since $K(u(t))<0$ for any $t \in [0, T_{max})$, by \eqref{min2}, Lemma \ref{clem1} and \eqref{bu2}, then we know that
\begin{align*}
m \leq I(u(t)) &\leq \frac{3(p-2)(2-b)+2(2-b)a}{(6(p-2)+4a)(3-b)}\int_{\R^3} |x|^{-b}|u(t)|^{6-2b} \,dx \\
&\leq \frac{3(p-2)(2-b)+2(2-b)a}{(6(p-2)+4a)(3-b)} C_*^{6-2b} \|\nabla u(t)\|^{6-2b} \\
&= \left(\frac{3(p-2)(2-b)+2(2-b)a}{(6(p-2)+4a)(3-b)} \right)^{3-b} m^{b-2} \|\nabla u(t)\|^{6-2b},
\end{align*}
from which we obtain that
\begin{align} \label{bu1}
\|\nabla u(t)\|^2 \geq \frac{(6(p-2)+4a)(3-b)} {3(p-2)(2-b)+2(2-b)a}m.
\end{align}
Since $E(u_0)<m$, by the conservation of the energy, then there exists $\delta_0>0$ such that $E(u(t)) \leq (1-\delta_0) m$ for any $t \in [0, T_{max})$. Taking $\eps>0$ small enough and applying \eqref{bu} and \eqref{bu1}, we then derive that
\begin{align*}
V''_R(t) &\lesssim 16(3-b) (1-\delta_0)m-  \frac{(6(p-2)+4a)(3-b)(16-8b-\eps)}{3(p-2)(2-b)+2(2-b)a}m +\eps \\
& = 16(3-b) (1-\delta_0)m-16(3-b)m + \frac{(6(p-2)+4a)(3-b) \eps}{3(p-2)(2-b)+2(2-b)a}m+\eps \\
& <-8(3-b) \delta_0 m,
\end{align*}
where we used the equality
$$
\frac{(6(p-2)+4a)(16-8b)}{3(p-2)(2-b)+2(2-b)a}=16.
$$
It obviously follows that $u$ blows up in finite time and the proof is completed.
\end{proof}
\hrule

\vspace{0.3cm}

{\noindent{\bf\large Declarations.}}
On behalf of all authors, the corresponding author states that there is no conflict of interest. No data-sets were generated or analyzed during the current study.

\vspace{0.3cm}

\hrule 

\end{document}